\documentclass[11pt]{article}
\usepackage{amssymb,amsmath,bm}
\usepackage{amsthm}
\usepackage{xcolor}
\usepackage{textcomp}
\usepackage{enumerate}      
\usepackage{graphicx}        
\usepackage{caption, subcaption}

\usepackage{url}

\usepackage{tabu}

\usepackage[export]{adjustbox}

\usepackage{mathrsfs}

\usepackage{url}

\usepackage{array}
\newcommand{\PreserveBackslash}[1]{\let\temp=\\#1\let\\=\temp}
\newcolumntype{C}[1]{>{\PreserveBackslash\centering}p{#1}}
\newcolumntype{R}[1]{>{\PreserveBackslash\raggedleft}p{#1}}
\newcolumntype{L}[1]{>{\PreserveBackslash\raggedright}p{#1}}
\renewcommand{\setminus}{{\smallsetminus}}

\usepackage{amssymb,amsmath,bm}
\usepackage{amsthm}
\usepackage{hyperref}
\usepackage{mathrsfs}
\usepackage{textcomp}
\usepackage{enumerate}
\usepackage{graphicx}
\usepackage{tikz, tikz-cd}
\usepackage{verbatim}
\usetikzlibrary{decorations.markings}

\tikzset{->-/.style={decoration={
  markings,
  mark=at position #1 with {\arrow{>}}},postaction={decorate}}}
\tikzset{-<-/.style={decoration={
  markings,
  mark=at position #1 with {\arrow{<}}},postaction={decorate}}}

\newtheorem{theorem}{Theorem}[section]
\newtheorem{lemma}[theorem]{Lemma}
\newtheorem{proposition}[theorem]{Proposition}
\newtheorem{definition}[theorem]{Definition}
\newtheorem{corollary}[theorem]{Corollary}

\theoremstyle{remark}
\newtheorem{remark}[theorem]{Remark}

\theoremstyle{remark}

\numberwithin{equation}{section}

\begin{document}

\title{\bf Adjoint Reidemeister torsion from hyperbolic gluing equations}
\author{Shuang Ming and Baojun Wu}
\date{}

\maketitle
\begin{abstract}
Motivated by the study of asymptotics of quantum invariants, Dimofte and Garoufalidis introduced a power series associated to a suitable ideal triangulation of a cusped hyperbolic $3$-manifold. They proved that its constant term can be written in terms of Neumann-Zagier data and the complex shape parameters of ideal tetrahedra, and conjectured that it equals the adjoint twisted Reidemeister torsion. On the other hand, in the study of asymptotics of Turaev-Viro type invariants of cusped $ 3$-manifolds, the authors, together with Liu, Sun and Yang, found that the one-loop terms of their asymptotic expansions could be written in terms of Gram matrices and decorated edge lengths of ideal tetrahedra. In this paper, we prove the conjecture of Dimofte and Garoufalidis and relate the one-loop term appearing in the Turaev-Viro type invariant to the adjoint twisted Reidemeister torsion.
\end{abstract}

\section{Introduction}

The semiclassical study of quantum invariants of $3$-manifolds asks how geometric and topological information emerges from their combinatorial definitions. Volume conjectures concern the leading exponential behaviour~\cite{Kas95,CY18}; the one-loop contribution probes infinitesimal fluctuations around the corresponding classical geometry. The role of torsion in this description goes back to Schwarz's interpretation of analytic torsion in abelian field theory~\cite{Sch78,Sch79} and Witten's semiclassical analysis of Chern--Simons theory~\cite{Wit89}. These ideas motivate the expected appearance of adjoint Reidemeister torsion in quantum asymptotics. They do not, however, identify the one-loop expression produced by a finite-dimensional model with the torsion of the underlying $3$-manifold.

The one-loop conjecture of Dimofte and Garoufalidis~\cite{DG13} makes this identification precise for a model built from ideal tetrahedra. Its two sides have different origins. The one-loop invariant is expressed through the linearized gluing equations and a monomial in the shape parameters, whereas adjoint Reidemeister torsion is defined using the full based chain complex with coefficients in the adjoint local system. Although both constructions are combinatorial, equality between them does not follow merely from identifying their infinitesimal deformation spaces. One must also recover the determinants of the changes of basis and the normalization determined by peripheral homology.

A complementary motivation comes from the Turaev--Viro-type state integrals of Liu, Ming, Sun, Wu and Yang~\cite{LMSWY25a,LMSWY26}. These models use modular-double $b$-$6j$ symbols~\cite{PT01}, which are related to the fusion kernels of Virasoro conformal blocks~\cite{PT99}, together with their ideal variants in the cusped setting. Their geometric one-loop expressions raise a parallel question: how does adjoint Reidemeister torsion emerge from this construction, beyond the leading hyperbolic volume?
\par

In this paper, we give affirmative answers to both questions for cusped hyperbolic $3$-manifolds. We prove the Dimofte--Garoufalidis one-loop conjecture for $\rho_0$-regular ideal triangulations and, in the geometric case, identify the Turaev--Viro-type geometric one-loop expression with the appropriately normalized absolute square of adjoint Reidemeister torsion.

\subsection{Asymptotic expansion of quantum invariants and one-loop conjecture}
The quantum invariants are usually defined using combinatorial descriptions of $3$-manifolds. For instance, the Reshetikhin-Turaev invariant \cite{RT91} is defined using surgery diagrams of $3$-manifolds. The Turaev-Viro invariant \cite{TV92} and Kashaev invariants \cite{Kas95} are defined using (ideal) triangulations of $3$-manifolds. Hence the asymptotic study of these quantum invariants suggests that their asymptotic expansions are written in terms of combinatorial data together with parameters describing the geometric shape of the gluing pieces. Inspired by the study of asymptotics of quantum invariants, Dimofte and Garoufalidis constructed a perturbative series from enhanced Neumann–Zagier data \cite{DG13}. Its one-loop invariant is a determinant of the linearized gluing and peripheral equations multiplied by a monomial determined by a flattening. They established its independence of auxiliary choices and its invariance under suitable Pachner moves, and conjectured its equality with adjoint Reidemeister torsion.

The conjecture of Dimofte and Garoufalidis was initially proposed for hyperbolic knot complements and was later generalized to hyperbolic 3-manifolds with boundary homeomorphic to a union of tori by
Siejakowski \cite{Sie21} and Pandey-Wong \cite{PW23}. The conjecture has been established for several substantial families. Yoon \cite{Yoo24} proved a twisted version for hyperbolic once-punctured torus bundles. Dunfield, Garoufalidis and Yoon \cite{DGY23} subsequently proved a polynomial version for layered triangulations of fibered 3-manifolds with toroidal boundary. Pandey and Wong \cite{PW23} proved the conjecture for fundamental shadow link complements and manifolds obtained by sufficiently long Dehn fillings. Garoufalidis and Yoon \cite{GY26} proved it for hyperbolic two-bridge knots, using the earlier work of Ohtsuki and Takata together with an explicit computation. Besides verifying examples, Siejakowski \cite{Sie21} gave a cohomological interpretation of the infinitesimal gluing equations and their peripheral normalization. His torsion factorization recovers the determinant contribution and isolates the remaining identification with the flattening monomial.

{Our first theorem establishes this identity in the setting of $\rho_0$-regular ideal triangulations.}

\begin{theorem}\label{thm:main1}[One-loop formula]
Let M be a hyperbolic 3-manifold with toroidal boundary and let 
$$\boldsymbol{\gamma}=\{\gamma_{1},\ldots, \gamma_{|V|}|\gamma_{i}\subset \partial_{i}M\}$$ be a system of simple closed curves of $\partial M$. Let $\mathcal{T}$ be a $\rho_{0}$-regular ideal triangulation of $M$, i.e., the character $[\rho_{0}]$ is in the image of the pseudo-developing map $\mathcal{V}(\mathcal{T})\to X(M)$, where $\rho_{0}$ is the discrete faithful representation $\pi_{1}(M)\to PSL(2, \mathbb C)$. Then for any $\boldsymbol{z}=\{z_{\Delta}|\Delta\in T\}\in\mathcal{V}(\mathcal T)\subset \mathcal{Z}(\mathcal{T})$ such that the induced $PSL(2, \mathbb{C})$-representation $\rho(\boldsymbol{z})$ is $\boldsymbol{\gamma}$-regular, we have
\begin{equation}\label{eq:main1}
 \mathrm{Tor}(M, \rho, \boldsymbol\gamma)
 =
 \frac{1}{\det(C^TC)}
 \det\begin{pmatrix}
 K&C\\
 L_{\boldsymbol\gamma}&0
 \end{pmatrix}
 \prod_{\Delta\in T}^N z_{\Delta}^{f_\Delta''}(z_\Delta'')^{-f_\Delta}.
\end{equation}
where
\begin{enumerate}
    \item $C$ is the edge-vertex adjacency matrix of $\mathcal{T}$,
    \item $K=A\Delta_{z''}+B\Delta_{z}^{-1}$, $L_{\boldsymbol \gamma}=A_{\boldsymbol \gamma}\Delta_{z''}+B_{\boldsymbol \gamma}\Delta_{z}^{-1}$, where $A, B, A_{\boldsymbol \gamma}, B_{\boldsymbol \gamma}$ are Neumann-Zagier matrices and $\Delta_{z}$(resp. $\Delta_{z''}$) is the diagonal matrix whose diagonal entry are $\{z_{\Delta}|\Delta\in T\}$ (resp. $\{z_{\Delta}''|\Delta\in T\}$);
    \item $f_{\Delta}, f_{\Delta}'$ and $f_{\Delta}''$ form a combinatorial flattening if $\rho$ induces the complete hyperbolic structure; otherwise, they are required to form a strong flattening (See Section \ref{sec:2.3} for the definition of Neumann-Zagier datum).
\end{enumerate}

\end{theorem}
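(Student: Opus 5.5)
We work directly with the adjoint twisted chain complex of the ideal triangulation and compute its torsion by a sequence of explicit changes of basis, each contributing one factor of the right-hand side of \eqref{eq:main1}. Concretely, we take the cell structure on $M$ (or on its end compactification with the cusps removed) dual to $\mathcal T$. We build the twisted cochain complex $C^*(M;\mathfrak{sl}_2(\mathbb C)_{\mathrm{Ad}\rho})$. Using the developing map determined by $\boldsymbol z$, we trivialize the local system over each ideal tetrahedron by the three-dimensional space of Killing fields, or equivalently by infinitesimal motions of the four ideal vertices. $\boldsymbol\gamma$-regularity gives $H^1\cong\mathbb C^{|V|}$, with basis dual to the peripheral curves $\gamma_i$. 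It also gives $H^2\cong\mathbb C^{|V|}$, with basis from the cusp tori, and $H^0=H^3=0$. This is the cohomological framework of Siejakowski \cite{Sie21}, and we take his factorization as the starting point. The torsion splits as a product of a determinant of the linearized gluing map and correction terms coming from how each tetrahedron's local basis compares with the shape-coordinate basis.

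The key steps, in order, are the following.

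\begin{enumerate}
\item Replace the chain complex by a quasi-isomorphic ``shape complex'' of the form
\[
0\to \mathbb C^{V}\xrightarrow{C}\mathbb C^{E}\to\mathbb C^{T}\to\cdots .
\]
Here the middle map is the derivative of the edge equations, i.e.\ $A\Delta_{z''}+B\Delta_z^{-1}$ after the substitution $d\log z'=\ldots$. The vertex term records infinitesimal moves of the cusp points, which is the source of the adjacency matrix $C$. The transition determinants between the standard cellular bases and these shape bases are computed tetrahedron by tetrahedron. Each one should equal a $2\times2$ cross-ratio derivative such as $z(1-z)$ or $z''$, raised to a power.

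\item Compute the torsion of the shape complex using the peripheral basis. Inserting $L_{\boldsymbol\gamma}$ as the homology representative of $H^1$ produces the block determinant $\det\begin{pmatrix}K&C\\L_{\boldsymbol\gamma}&0\end{pmatrix}$. The kernel of $C^T$ and the Gram-type normalization of the vertex contribution produce $\det(C^TC)^{-1}$.

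\item Collect the local factors. The product of tetrahedral factors is a monomial $\prod z_\Delta^{a_\Delta}(z''_\Delta)^{b_\Delta}$, whose exponents come from the chosen edge-tetrahedron incidences. Using the flattening relations $f+f'+f''=1$ and $A f+B f''=(2,\dots,2)-$(edge condition), we show that this monomial differs from $\prod z^{f''}(z'')^{-f}$ by a factor of $\pm1$. That factor is absorbed in the $PSL$ sign ambiguity of $\mathrm{Tor}$.
\end{enumerate}

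We expect step 3, matching the local monomial with the flattening monomial, to be the main obstacle. This is exactly the piece Siejakowski left open. The raw monomial depends on noncanonical choices of ordering. We would first show that changing the combinatorial flattening by an element of the lattice spanned by the rows of $(A\;B)$ changes $\prod z^{f''}(z'')^{-f}$ by the value of a gluing equation. At the complete structure, or under a strong flattening at general $\rho$, the gluing equations, including the peripheral ones, evaluate to $1$, so the monomial is well defined. It then suffices to exhibit a single explicit flattening for which the local transition determinants match. The natural candidate is the one induced by a vertex ordering, i.e.\ the $\rho_0$-regular branched choice. Here the $\rho_0$-regularity hypothesis ensures that the quasi-isomorphism of step 1 is valid on the relevant component of $\mathcal V(\mathcal T)$, so no tetrahedron degenerates.
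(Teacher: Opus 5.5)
There is a genuine gap, and it sits where the new content of the theorem lies. Step~3 is a placeholder, and the shortcut you propose for it does not work.

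The basis-change factors from comparing a cellular basis with one adapted to infinitesimal motions of the developed ideal vertices are not tetrahedron-by-tetrahedron cross-ratio expressions such as $z(1-z)$. In the paper they are, up to sign, $2[03]_\Delta[12]_\Delta[01]_\Delta[23]_\Delta$ for each tetrahedron, $(2[01]_f[02]_f[12]_f)^{-1}$ for each face, and $[01]_e^{2}$ for each edge. Here $[ij]=\det(v_i,v_j)$ depends on the chosen lifts of the cusp points to $\mathbb C^2$, so only the global product is meaningful. Transporting every corner into a fixed cellular fundamental domain of its cusp torus cancels the face factors against the tetrahedral ones. What remains is $\prod_\Delta([02]_\Delta[13]_\Delta)^{-1}\prod_e[01]_e^2$, times a power of $2$ and cusp-eigenvalue factors.

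The flattening is then \emph{used}, not matched. Its edge condition (weights summing to $2$ around each edge) is what lets you split each $[01]_e^2$ among the tetrahedral corners at $e$; this turns the product into $\prod_\Delta z_\Delta^{-f''_\Delta}(z''_\Delta)^{f_\Delta}$. The relation $f+f'+f''=1$, together with the peripheral condition, then cancels the leftover cusp-eigenvalue factors. This is why a strong flattening is needed away from $\rho_0$.

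Your plan is to prove flattening-independence and then exhibit one flattening ``induced by a vertex ordering'' for which the local determinants match. This supplies none of the above. A branching does not produce integer weights with edge sums $2$. Whatever flattening you pick, the local determinants depend on the lifts and only become a shape monomial after the global redistribution just described. Also, two flattenings differ by an element of $\ker(A\ B)$, which is spanned rationally by the vectors $(B^Tx,-A^Tx)$ and their peripheral analogues, not by rows of $(A\ B)$.

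Steps~1--2 are also not yet constructions. As written, $0\to\mathbb C^V\xrightarrow{C}\mathbb C^E\to\mathbb C^T\to\cdots$ has Euler characteristic $|V|-|E|+|T|=|V|\neq0$ (recall $|E|=|T|$). So it cannot be quasi-isomorphic to the adjoint complex, whose Euler characteristic is $0$, unless you add terms you never specify. In the paper, $C$ is not a differential at all. The paper instead:
\begin{enumerate}
\item takes the relative complex $C_\bullet(M,\partial M;\mathrm{Ad}\rho)$, which is Poincar\'e dual to the complex defining $\mathrm{Tor}$;
\item forms the mapping cone of an evaluation map into a boundary complex of tangent vectors at the developed cusps;
\item Gaussian-eliminates to $\mathbb C^T\xrightarrow{D}\mathbb C^E$ with $D=\tfrac12(A\Delta_{z''}+B\Delta_z^{-1})$;
\item obtains $L_{\boldsymbol\gamma}$ from the duality pairing of $\ker D$ with $\gamma_i\otimes\mathbf I_i$, which gives $\tfrac12\,dH(\gamma_i)$, and $C$ from the pairing of $\operatorname{coker}D$ with $[T_i]\otimes\mathbf I_i$, which gives $C^T$;
\item combines these by a bordered-determinant lemma.
\end{enumerate}

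Finally, $\rho_0$-regularity is not about degenerate tetrahedra, since points of $\mathcal V(\mathcal T)$ have $z_\Delta\neq0,1$ by definition. Your plan never addresses the passage from generic characters to all $\boldsymbol\gamma$-regular ones. The paper needs nontrivial boundary characters so that the boundary complex is acyclic with torsion $\pm1$. This fails at $\rho_0$ itself, whose cusp holonomy is parabolic. The general case, including $\rho_0$, is recovered by Zariski density and Porti's rationality theorem, with flattening-independence at $\rho_0$ allowing arbitrary combinatorial flattenings there.
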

\begin{remark}
The identity \eqref{eq:main1} is different from the original formulation of the one-loop conjecture, in which the Jacobian term is defined using row-eliminated matrices, and here we use bordered matrices. The two formulations are easily proved to be equivalent. We adopt the current form not only because it does not involve any choice of edges to be eliminated, but also because it fits better with our proof scheme.
\end{remark}

The proof of Theorem \ref{thm:main1} is a direct comparison of based chain complexes. We begin with the relative adjoint twisted chain complex
$$\mathcal{A}_{\bullet}:=C_{\bullet}(M, \partial M;\mathrm{Ad}\rho).$$
We work with this chain complex for two reasons. (1)The ideal triangulation $\mathcal{T}$ defines a cell decomposition of the pair. (2) $\mathcal{A}_{\bullet}$ is the twisted Poincar\'e dual of the complex used to compute the torsion of $M$. Choosing a basis of homology groups of $\mathcal{A}_{\bullet}$ that is dual to Porti's basis, the torsion of $\mathcal{A}_{\bullet}$ equals the adjoint twisted Reidemeister torsion by a formula by Milnor (See Proposition \ref{thm:poincaredual}).

An infinitesimal M\"obius transform can be evaluated at every developed ideal vertex. Applied to the corners of the truncated cells, this gives an evaluation map from $\mathcal{A}_{\bullet}$ to a boundary complex $\mathcal{B}_{\bullet}$ whose coefficients are the tangent vectors at the developed vertices. The evaluation map suggests a more natural basis, which we call the geometric basis for the mapping cone 
$$\mathcal{K}_{\bullet}=\mathrm{Cone}(\mathcal{A}_{\bullet}\xrightarrow{\mathrm{ev}}\mathcal{B}_{\bullet-1}),$$
Then we show that
\begin{enumerate}
\item The determinant of the change from the cellular basis to the geometric basis provides the monomial flattening term in \eqref{eq:main1},
\item The mapping cone retracts to a two-term chain complex
$$\mathbb{C}^{T}\to \mathbb C^{E}\to 0$$
that capture the infinitesimal gluing equations of $(M, \mathcal{T})$. Its torsion provides the Jacobian term in \eqref{eq:main1}.
\end{enumerate}
\subsection{Turaev-Viro type invariants and their one-loop term}
The Turaev-Viro type invariants, e.g.,\cite{TV92,LMSWY25a,LMSWY26}, behave differently from those of Jones-Kashaev-Reshetikhin-Turaev type. They have the following features:
\begin{enumerate}
    \item They take values in real numbers instead of complex numbers.
    \item They are computed using triangulations of $3$-manifolds and $6j$-symbols of quantum groups.
    \item They are related to squared norms of Reshetikhin-Turaev invariants.
\end{enumerate}
Chen and Murakami \cite{CM23} first discovered that the Gram matrices of hyperbolic tetrahedra appeared in the constant term of the asymptotic expansion of quantum $6j$-symbols. The same phenomenon is observed for $b$-6j symbols \cite{LMSWY25a,LMSWY25b,LMSWY26}, i.e., the analogs of $6j$-symbols for representation theory of $U_{q\tilde{q}}(\mathfrak{sl}_{2})$. Based on the relation between the Turaev-Viro invariant and $6j$-symbols, Wong and Yang \cite{WY24} obtained geometric formulas for adjoint torsion involving Gram determinants and Jacobians of cone angles with respect to edge lengths. The formula was later applied to the study of asymptotics of the Turaev-Viro invariant constructed from $b$-6j symbols, which is a state-integral quantum invariant defined on hyperbolic manifolds with totally geodesic boundary.

In the study of $b$-6j symbols, the authors, together with Liu, Sun and Yang realize that $b$-6j symbols admit an interesting renormalization, so that the Gram matrices of decorated ideal tetrahedra appear in the constant term of the asymptotic expansion of the renormalized $6j$-symbols. Using these renormalized b-$6j$ symbols, the authors, together with Liu, Sun and Yang, extended the previously defined Turaev-Viro invariants for hyperbolic $ 3$-manifolds with cusped boundary, so it is natural to relate the constant term of the asymptotic expansion of the state-integral to the adjoint Reidemeister torsion. Our second theorem confirms the relation.

\begin{theorem}\label{thm:main2}
Let $M$ be a hyperbolic $3$-manifold with toroidal boundary, and let $\mathcal{T}$ be a geometric triangulation of $M$. Let $\boldsymbol \theta(\boldsymbol{x})=(\theta_{1},\ldots,\theta_{|E|})$ denote the cone angles around edges of $\mathcal{T}$ and let $Gram(\Delta)$ denote the Gram matrix of $\Delta\in T$, both considered as functions of decorated edge lengths $\boldsymbol{x}$. Then at $\boldsymbol{x}=\boldsymbol{x}_{0}$, the decorated edge length realizes the complete hyperbolic metric of $M$, we have
\begin{equation}\label{eq:main2}
\frac{1}{{\det(C^{T}C)^2}}\det\begin{bmatrix}
[\frac{d\theta_{i}}{dx_{j}}]_{ij} & C\\
C^{T} & 0
\end{bmatrix} e^{-\sum_{e} 2x_{e}}\prod_{\Delta\in T}4\sqrt{-\mathrm{Gram}(\Delta)}=\det[\mathrm{Im}(\frac{\partial H(l_{i})}{\partial H(m_{j})})]|\mathrm{tor}(M, \rho_{0},  \boldsymbol m)^{2}|,\end{equation}
where $\boldsymbol{m}=\{m_{1},\ldots, m_{|V|}\}$ and $\boldsymbol{l}=\{l_{1},\ldots, l_{V}\}$ are two systems of peripheral curves such that $m_{i}, l_{i}$ belong to the $i$-th vertex and their algebraic intersection number equals  $1$. Here $H(m_{1}),\ldots, H(m_{|V|})$ (resp. $H(l_{1}),\ldots, H(l_{|V|})$) denote the log-holonomies of curves $m_{1},\ldots, m_{|V|}$ (resp. $l_{1},\ldots, l_{|V|}$), considered as local coordinates of character varieties $\mathrm X(M)$ near the character $\rho_{0}$.
\end{theorem}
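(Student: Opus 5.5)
The plan is to deduce Theorem~\ref{thm:main2} from Theorem~\ref{thm:main1}, applied at the complete structure $\boldsymbol z_0$ realized by the geometric triangulation with $\boldsymbol\gamma=\boldsymbol m$. The task is then to identify the left-hand side of \eqref{eq:main2} with $\det[\mathrm{Im}(\partial H(l_i)/\partial H(m_j))]$ times $|\mathrm{Tor}(M,\rho_0,\boldsymbol m)|^2$, computed from \eqref{eq:main1}. Three comparisons are needed, one for each factor.

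\textbf{Step 1: the Jacobian.} Decorated (Penner-type) edge lengths $x_e$ determine the shapes of the decorated ideal tetrahedra. Each shape $z_\Delta$ is a cross-ratio of $\lambda$-lengths $e^{x_e/2}$, so $\log|z_\Delta|$ is linear in $\boldsymbol x$, while the dihedral angles are $\arg$ of the shapes. Changing coordinates $\boldsymbol x\mapsto \log|\boldsymbol z|$, the matrix $[\partial\theta_i/\partial x_j]$ becomes the imaginary part of the holomorphic derivative of the edge log-holonomies. Cauchy--Riemann gives $\partial\,\mathrm{Im}/\partial\,\mathrm{Re}=-\partial\,\mathrm{Re}/\partial\,\mathrm{Im}$ for holomorphic functions.

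The kernel of $\boldsymbol x\mapsto\boldsymbol z$ consists of the decoration changes at each cusp. It is spanned by the columns of $C$, which is why the matrix is bordered by $C$ and $C^T$. The bordered real determinant should then equal the real Jacobian of the holomorphic map
\[
\boldsymbol z\mapsto(\text{edge log-holonomies},\ H(\boldsymbol m))
\]
restricted to the real slice. That real Jacobian is $|\det\begin{pmatrix}K&C\\ L_{\boldsymbol m}&0\end{pmatrix}|^2$ up to the correction $\det[\mathrm{Im}(\partial H(l)/\partial H(m))]$. The correction arises because the real constraint $\mathrm{Re}\,H(\boldsymbol m)$, rather than the complex $H(\boldsymbol m)$, is what the edge-length variation fixes: the cusp shape is rigidified only by the real part, and changing from real/imaginary parts of $H(m)$ to $\mathrm{Re}\,H(m),\mathrm{Re}\,H(l)$ has Jacobian $\det\mathrm{Im}(\partial H(l)/\partial H(m))$. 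I would establish this with a linear-algebra lemma: for a holomorphic map with Jacobian $J$, the determinant of the real part restricted appropriately equals $|\det J|^2$ times the Schur complement coming from the Neumann--Zagier symplectic pairing. The peripheral factor is obtained from the Neumann--Zagier symplectic relations between $(A,B)$ and $(A_{\boldsymbol l},B_{\boldsymbol l})$.

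\textbf{Step 2: the Gram and flattening factors.} This is the main obstacle. One must show that
\[
\prod_\Delta 4\sqrt{-\mathrm{Gram}(\Delta)}\,e^{-\sum_e 2x_e}
\]
equals $\bigl|\prod_\Delta z_\Delta^{f''_\Delta}(z''_\Delta)^{-f_\Delta}\bigr|^2$ times the inverse of the modulus-squared of the change-of-variables factors from Step 1, namely $\prod|z_\Delta''|$ and $|z_\Delta|$ coming from $\Delta_{z''},\Delta_z^{-1}$. For a decorated ideal tetrahedron, $\sqrt{-\mathrm{Gram}}$ is expressed in $\lambda$-lengths as a product $\lambda\lambda'\lambda''\cdot\mathrm{Im}(z)/|z|$-type quantity. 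Multiplying over tetrahedra, each edge contributes $\lambda_e^{\deg e}$. This contribution is cancelled by $e^{-2x_e}$ via the combinatorial flattening condition (angle sums $2\pi$ and the integer flattening sum to $2$ around each edge) together with the cusp condition. The remaining $\prod|\mathrm{Im}\,z|$ factors should match $|z^{f''}(z'')^{-f}|^2$ modulo the Jacobian factors. I would verify the single-tetrahedron identity first and then check that the edge and cusp exponents cancel using the defining linear equations of the flattening. Signs and absolute values require care, but both sides are positive at $\boldsymbol x_0$.

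\textbf{Step 3: assembly.} Combining Steps 1--2 with $|\mathrm{Tor}(M,\rho_0,\boldsymbol m)|^2$ from \eqref{eq:main1} yields \eqref{eq:main2}. The factor $\det(C^TC)^{-2}$ is the square of the factor $\det(C^TC)^{-1}$ in \eqref{eq:main1}. This is consistent with the fact that bordering the real Jacobian by $C$ on both sides doubles the normalization.
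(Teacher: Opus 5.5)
Your architecture matches the paper's: apply Theorem~\ref{thm:main1} at $\rho_0$ with $\boldsymbol\gamma=\boldsymbol m$, compare real and complex Jacobians, use Cauchy--Riemann for the peripheral factor, and let the flattening absorb the Gram factors. However, the step that carries the argument is missing. The real bordered matrix in \eqref{eq:main2} has size $|E|+|V|$, and its nondegenerate part lives on the $(|E|-|V|)$-dimensional space of edge-length deformations modulo decorations. By contrast, $|\det|^2$ of the complex bordered matrix is the determinant of a real matrix of twice that size. ``The real Jacobian restricted appropriately'' does not bridge this, and two ingredients are needed.

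\textbf{First, reduce both sides with one common row selection $P$.} On the complex side,
$\det\begin{pmatrix}K&C\\ L_{\boldsymbol m}&0\end{pmatrix}/\det(C^TC)=\pm\det\begin{pmatrix}K^{red}\\ L_{\boldsymbol m}\end{pmatrix}/\det(P^T,C)$ (using $C^TK=0$). The right-hand matrix is an honest square Jacobian of $\log z'_\Delta\mapsto(H(e_{a_k}),H(m_i))$. On the real side, with $J=[d\theta_i/dx_j]$,
$\det\begin{pmatrix}J&C\\ C^T&0\end{pmatrix}/\det(C^TC)^2=\pm\det(PJP^T)/\det(P^T,C)^2$. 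This needs $C^TJ=0$ in addition to the $JC=0$ you note; the paper gets it from the symmetry of $J$, which is a Hessian. Only after both reductions is your ``squared normalization'' remark in Step~3 justified.

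\textbf{Second, complete $\boldsymbol\theta^{red}$ to $2|T|$ real functionals} by adding $\mathrm{Re}H(e_{a_k})$, $\mathrm{Re}H(m_i)$ and $\mathrm{Re}H(l_i)$. These added functionals vanish on the image $S=\frac12(-B^{red},A^{red})^T$ of the edge-length map, precisely because $AB^T=BA^T$ and $A_\gamma B^T=B_\gamma A^T$. So in a basis $(S,R)$, with $R$ dual to them, the full real Jacobian is block triangular and equals $\det(d\boldsymbol\theta^{red}/d\boldsymbol x^{red})/\det(S,R)$. Lemma~\ref{lem:symplecticlinear} then gives $|\det(S,R)|=2^{-|E|}$. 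This is where the Neumann--Zagier relations genuinely enter, and where half of the constant $4^{|T|}$ comes from. Your unformulated ``Schur complement'' lemma leaves Step~1 as an assertion rather than an argument.

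\textbf{Step~2 is set up incorrectly.} $\sqrt{-\det\mathrm{Gram}(\Delta)}$ is not of the form $\lambda\lambda'\lambda''\,g(z_\Delta)$: changing the decoration at one vertex of $\Delta$ by $c$ multiplies it by $e^{c}$, not $e^{3c/2}$. The correct formula (Proposition~\ref{prop:grambyz}) is $\frac12 e^{b''_\Delta}\mathrm{Im}(z_\Delta)$ with $b''_\Delta=x_{03}+x_{12}$. So there is no ``$\lambda_e^{\deg e}$ cancelled by $e^{-2x_e}$''. Instead, the flattening edge equations give $\sum_e 2x_e=\sum_\Delta(b_\Delta f_\Delta+b'_\Delta f'_\Delta+b''_\Delta f''_\Delta)$. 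Using $f_\Delta+f'_\Delta+f''_\Delta=1$, the per-tetrahedron discrepancy $e^{b''_\Delta-b_\Delta f_\Delta-b'_\Delta f'_\Delta-b''_\Delta f''_\Delta}$ equals $|z_\Delta|^{2f''_\Delta-2}|z''_\Delta|^{-2f_\Delta}$. This yields the modulus squared of the flattening monomial, with no cusp condition needed.

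The leftover factor $\mathrm{Im}(z_\Delta)/|z_\Delta|^2$ cancels the local Jacobian of $(\log|z'|,\arg z')\mapsto(\log|z|,\log|z''|)$ from Lemma~\ref{lem:localjac}. That is, it is the change from the holomorphic coordinates $\log z'_\Delta$ to the coordinates in which the edge-length map is linear. It has nothing to do with $\Delta_{z''}$ and $\Delta_z^{-1}$, which remain inside $K$ and $L_{\boldsymbol m}$. Likewise, the $\mathrm{Im}(z_\Delta)$ factors do not ``match'' the flattening monomial.
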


In \cite{WY24}, Wong and Yang anticipated a corresponding description for geometrically triangulated cusped manifolds in terms of decorated edge lengths. Theorem 1.3 provides a precise cusped counterpart, accounting for the decoration directions and the peripheral period matrix. The right-hand side also appears in the asymptotic expansion of the meromorphic 3D-index defined by Hodgson, Kricker and Siejakowski \cite{HKS21}, after we apply Theorem~\ref{thm:main1}. The explicit expression first appeared in \cite{HKS21}. We conjecture that the same term also appears in the asymptotic expansion of the Turaev-Viro invariant~\cite{TV92}.

The proof strategy of Theorem \ref{thm:main2} is to first rewrite the torsion in terms of shape parameters using \eqref{eq:main1}. We then interpret the squared norm of the complex Jacobian in \eqref{eq:main1} in terms of the Jacobian of real variables. This provides the Jacobian term of \eqref{eq:main2}, and the flattening monomial provides the Gram matrix.

The paper is organized as follows. Section 2 fixes the conventions for adjoint torsion, peripheral homology bases, and the torsion identities used in the proof, and recalls Neumann–Zagier data and decorated ideal tetrahedra. Section 3 constructs the evaluation comparison, computes its basis changes and residual differential, identifies the peripheral homology normalization, and proves Theorem 1.1. Section 4 derives Theorem 1.3 by comparing complex and real Jacobians and rewriting the resulting local factors in terms of Gram matrices.
\bigskip

\noindent\textbf{Acknowledgments.} We thank Tian Yang for helpful discussions and are especially grateful to him for conjecturing formula~\eqref{eq:main2} during one of these discussions.   S.M.\ was supported by the National Natural Science Foundation of China (No.12371124). B.W.\ was  supported by the National Key R$\&$D Program of China (No. 2023YFA1010700) and National Natural
Science Foundation of China Grant (No. 12526204).

\section{Preliminaries}
In this section, we recall the basics of adjoint twisted Reidemeister torsion for hyperbolic $3$-manifolds, the Neumann-Zagier datum, and decorated ideal hyperbolic tetrahedra. In Subsection \ref{sec:2.1} we recall the definition of adjoint Reidemeister torsion for both $3$-manifold $M$ and a pair $(M, \partial M)$, as well as Porti's choice of the bases of homology groups. In Subsection \ref{sec:2.2} we introduce three formulae for torsion of chain complexes, namely:
\begin{enumerate}
    \item Milnor's multiplicativity theorem (Theorem \ref{thm:milnor});
    \item the torsion formula for twisted Poincar\'e dual complex (Theorem \ref{thm:poincaredual});
    \item the torsion formula for an analog of Gaussian elimination (Lemma \ref{lem:retraction}).
\end{enumerate}
They are the main tools for our computation. In Subsection \ref{sec:2.3} we review the notation for the Neumann-Zagier datum for ideal triangulations of hyperbolic $3$-manifolds. In Subsection \ref{sec:2.4} and \ref{sec:2.5} we review angle structures, decorated ideal hyperbolic tetrahedra and their relation to the Neumann-Zagier datum.
\subsection{Adjoint Reidemeister Torsion}\label{sec:2.1}
\subsubsection{Torsion of chain complexes}
Let $C_{\bullet}$ be a based finite chain complex
$$0\to C_{d}\xrightarrow{\partial}C_{d-1}\xrightarrow{\partial}\ldots \xrightarrow{\partial}C_{1}\xrightarrow{\partial}C_{0}$$
of $\mathbb{C}$-vector spaces, and with a basis $\boldsymbol{c}_{k}$ for each $C_{k}$. Let $H_{\bullet}$ be the homology of $C_{\bullet}$, i.e.,
$$H_{i}:=\frac{\mathrm{Ker}(C_{i}\xrightarrow{\partial} C_{i-1})}{\mathrm{Im}(C_{i+1}\xrightarrow{\partial}C_{i})}.$$
For each $H_{k}$, choose a basis $\boldsymbol{h}_{\bullet}=\{\boldsymbol h_{0},\ldots, \boldsymbol{h}_{d}\}$ and a lift of $\widetilde{\boldsymbol{h}_{k}}\subset C_{k}$ of $\boldsymbol{h}_{k}$. We also choose basis $\boldsymbol{b}_{k}$ for each $\mathrm{Im}(C_{k+1}\to C_{k})$ and a lift $\widetilde{\boldsymbol{b}_{k}}\subset C_{k+1}$. Then the Reidemeister torsion of the chain complex $C_{\bullet}$ with respect to $\boldsymbol{h}_{\bullet}$ is defined by
$$\mathrm{tor}(C_{\bullet}, \{\boldsymbol{h}_{k}\})=\pm\prod_{k=0}^{d}[\boldsymbol{b}_{k}\sqcup \widetilde{\boldsymbol{b}_{k-1}}\sqcup\widetilde{\boldsymbol{h}_{k}};\boldsymbol c_{k}]^{(-1)^{k+1}}.$$
The torsion depends only on the choice of $\boldsymbol{h}_{k}$, and does not depend on the choice of $\boldsymbol{b}_{k}$ and the liftings $\widetilde{\boldsymbol{b}_{k}}$ and $\boldsymbol{h}_{k}$.
Now we recall the adjoint twisted Reidemeister torsion of a CW-complex. Let $K$ be a finite CW-complex of $M$ and let $\rho: \pi_{1}(M)\to PSL(2, \mathbb{C})$ be a representation of its fundamental group. Consider the twisted chain complex
$$C_{\bullet}(K; \mathrm{Ad}\rho)=C_{\bullet}(\widetilde{K}, \mathbb Z)\otimes_{\pi_{1}(M)}\mathfrak{sl}_{2}(\mathbb{C})$$
where $C_{\bullet}(\widetilde{K}, \mathbb Z)$ is the simplicial complex of the universal covering of $K$. The fundamental group acts on $C_{\bullet}(\widetilde{K}, \mathbb Z)$ by deck transformations and acts on $\mathfrak{sl}_{2}(\mathbb C)\cong \mathrm{Lie}(PSL(2, \mathbb{C}))$ by conjugation. Now we make it a based chain complex as follows. Let $\boldsymbol{e}_{1}, \boldsymbol{e}_{2}, \boldsymbol{e}_{3}$ be a basis of $\mathfrak{sl}_{2}(\mathbb{C})$ and $\{c_{1}^{k},\ldots, c_{i_{d}}^{k}\}$ be a lift of the set of $k$-cells of $C_{k}(K, \mathbb Z)$ to $C_{\bullet}(\widetilde{K}, \mathbb Z)$.
Then we call
$$\boldsymbol{c}_{k}=\{c_{s}^{k}\otimes \boldsymbol{e}_{i}|i\in\{1,2,3\}, s\in \{1,\ldots, i_{k}\}\}$$
a standard basis of $C_{k}(K, \rho)$. Choose a basis $\boldsymbol{h}_{k}$ for each $H_{k}(K, \rho)$. The adjoint twisted Reidemeister torsion of $M$ with respect to the basis $\boldsymbol{h}_{\bullet}$ is defined to be
$$\mathrm{tor}(M, \boldsymbol{h}_{\bullet};\mathrm{Ad}\rho)=\mathrm{tor}(C_{\bullet}(K; \mathrm{Ad}\rho), \boldsymbol{h}_{\bullet}).$$
By \cite{Por18}, $\mathrm{tor}(M, \boldsymbol{h}_{\bullet}; \mathrm{Ad}\rho)$ depends only on the conjugacy class of $\rho$ and the choice of $\boldsymbol{h}_{\bullet}$. It depends neither on the choice of basis $\boldsymbol{e}_{1}, \boldsymbol{e}_{2}, \boldsymbol{e}_{3}$ nor the choice of lifts of the cells. In addition, the torsion is invariant under elementary expansions, elementary collapses of CW-complexes and subdivisions. Hence it defines an invariant of PL-manifolds and of topological manifolds of dimension less or equal to $3$. Therefore, we will denote the adjoint twisted Reidemeister torsion of a $3$-manifold $M$ by $\mathrm{tor}(M, \boldsymbol{h}_{k};\mathrm{Ad}\rho)$.

The adjoint twisted Reidemeister torsion can also be defined for CW-pairs. We mainly focus on the case of CW-decomposition of the pair $(M, \partial M)$ given by an ideal triangulation $\mathcal{T}$ of a $3$-manifold $M$ with $\partial M$ homeomorphic to union of tori. Let $K$ be a CW-complex of pair $(M, \partial M)$. Consider the twisted chain complex
$$C_{\bullet}(K;\mathrm{Ad}\rho)=C(\widetilde{K}, \mathbb{Z})\otimes_{\pi_{1}(M)}\mathfrak{sl}_{2}(\mathbb C).$$
Here $\widetilde{K}$ is the lifted CW-complex of pair $(\widetilde{M}, \widetilde{\partial M})$, we abuse the notation by denoting the boundary of $\widetilde{M}$ by $\widetilde{\partial M}$. Similarly, let $\boldsymbol{h}_{\bullet}$ be a basis of the homology groups of $C_{\bullet}(K;\mathrm{Ad}\rho)$. We define the adjoint twisted Reidemeister torsion of a $3$-manifold relative to its boundary by 
$$\mathrm{tor}((M, \partial M), \boldsymbol{h}_\bullet;\mathrm{Ad}\rho):=\mathrm{tor}(C_{\bullet}(K;\mathrm{Ad}\rho),\boldsymbol{h}_{\bullet})$$

\subsubsection{Torsion as functions over character variety}
In this section we recall results of Porti \cite{Por18,Por97} for the Reidemeister torsions of hyperbolic $3$-manifolds twisted by the adjoint action $\mathrm {Ad}_\rho=\mathrm {Ad}\circ\rho$ of an irreducible $\mathrm {PSL}(2;\mathbb C)$-representation $\rho$. Here $\mathrm {Ad}$ is the adjoint action of $\mathrm {PSL}(2;\mathbb C)$ on its Lie algebra $ \mathfrak{sl}_2(\mathbb C).$

Now suppose  $M$ is a compact, orientable  $3$-manifold with boundary consisting of $n$ disjoint tori $T_1, \dots,  T_n$ whose interior admits a complete hyperbolic structure with  finite volume. Let $\mathrm X(M), \mathrm X_{0}(M)$, $\mathrm X^{n}(M)$ and $\mathrm X^{irr}(M)$ be the $\mathrm{PSL}(2;\mathbb C)$-character variety of $M$, the distinguished component containing $\rho_{0}$, the components of dimension $n$, and the subset containing irreducible characters.

\begin{theorem}\cite[Section 3.3.3]{Por97}\label{HM} For a system of simple closed curves $\boldsymbol\gamma=(\gamma_1,\dots,\gamma_n)$ on $\partial M$ with $\gamma_i\subset T_i,$ $i\in\{1,\dots,n\},$  and a character $[\rho]$ in a Zariski open subset of $\mathrm X_0(M)\cap\mathrm X^{\text{irr}}(M),$  we have:
\begin{enumerate}[(i)]
\item For $k\neq 1,2,$ $ H_k(M;\mathrm{Ad}\rho)=0.$
\item  For $i\in\{1,\dots,n\},$ up to scalar $\mathrm Ad_\rho(\pi_1(T_i))^T$ has a unique invariant vector $\mathbf I_i\in \mathbb C^3;$ and
$$\mathrm H_1(M;\mathrm{Ad}\rho)\cong \mathbb C^n$$ 
with a basis
$$ h^1_{(M,\gamma)}=\{\mathbf I_1\otimes [\gamma_1],\dots, \mathbf I_n\otimes [\gamma_n]\}$$
where $([\gamma_1],\dots,[\gamma_n])\in  H_1(\partial M;\mathbb Z)\cong 
\bigoplus_{i=1}^n H_1(T_i;\mathbb Z).$
 
\item Let $([T_1],\dots,[T_n])\in \bigoplus_{i=1}^n H_2(T_i;\mathbb Z)$ be the fundamental classes of $T_1,\dots, T_n.$ Then 
 $$ H_2(M;\mathrm{Ad}\rho)\cong\bigoplus_{i=1}^n H_2(T_i;\mathrm{Ad}\rho)\cong \mathbb C^n$$ 
with  a basis 
$$ h^2_M=\{\mathbf I_1\otimes [T_1],\dots, \mathbf I_n\otimes [T_n]\}.$$
\end{enumerate}
\end{theorem}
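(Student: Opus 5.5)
The plan is to combine an Euler characteristic count with Poincaré duality and the long exact sequence of the pair $(M,\partial M)$, and then to identify $H^1(M;\mathrm{Ad}\rho)$ with the Zariski tangent space of $\mathrm X(M)$ at $[\rho]$ (Weil's infinitesimal deformation theory). The Zariski open subset is cut out by three requirements, each Zariski open in the irreducible component $\mathrm X_0(M)$ because it holds at $\rho_0$ or generically:
\begin{enumerate}[(a)]
\item $\rho$ is irreducible;
\item $[\rho]$ is a smooth point of $\mathrm X_0(M)$, which has complex dimension $n$ by Thurston's hyperbolic Dehn filling theory;
\item for each $i$, $\rho(\pi_1(T_i))$ is nontrivial, so that its centralizer in $\mathfrak{sl}_2(\mathbb C)$ is one-dimensional, spanned by $\mathbf I_i$.
\end{enumerate}
Requirement (c) gives the uniqueness up to scalar of the invariant vector in (ii). For (c) itself: $\pi_1(T_i)\cong\mathbb Z^2$ has abelian image, so that image is either a nontrivial diagonalizable group or a nontrivial parabolic group; in both cases the centralizer is one-dimensional.

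For (i), note first that $H_0(M;\mathrm{Ad}\rho)$ is the coinvariant space of $\mathfrak{sl}_2(\mathbb C)$. By duality with the invariants under the Killing form, it vanishes because an irreducible $\rho$ has no nonzero infinitesimal centralizer. Next, $M$ has nonempty boundary, so it collapses onto a $2$-complex and $H_3=0$. Since $\chi(M)=0$, we get $\dim H_1=\dim H_2$. By Weil, $H^1(M;\mathrm{Ad}\rho)$ is the Zariski tangent space of the representation variety modulo the orbit directions, and this equals $T_{[\rho]}\mathrm X_0(M)$ at a smooth point, which has dimension $n$. Via the Killing form, $H_1$ is dual to $H^1$, so $\dim H_1=\dim H_2=n$.

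For (iii), consider the exact sequence
$$H_3(M,\partial M)\to H_2(\partial M)\to H_2(M)\to H_2(M,\partial M).$$
By twisted Poincaré duality, $H_3(M,\partial M;\mathrm{Ad}\rho)\cong H^0(M;\mathrm{Ad}\rho)=0$, so $H_2(\partial M)\to H_2(M)$ is injective. On each torus, $H_2(T_i;\mathrm{Ad}\rho)\cong H^0(T_i)$ is the invariant line spanned by $\mathbf I_i$, and its generator is $\mathbf I_i\otimes[T_i]$. The source $H_2(\partial M)$ is therefore $n$-dimensional, and dimension counting shows the injection is an isomorphism with the stated basis.

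For (ii), the classes $\mathbf I_i\otimes[\gamma_i]$ are cycles because $\mathbf I_i$ is $\rho(\gamma_i)$-invariant. It remains to show that their images in $H_1(M)$ are linearly independent. For this I pair them against $H^1(M)=T_{[\rho]}\mathrm X_0(M)$. The Kronecker pairing of a tangent vector $v$ with $\mathbf I_i\otimes\gamma_i$ is, up to a nonzero constant, the derivative $v(\mathrm{tr}_{\gamma_i})$, or the derivative of the log-holonomy of $\gamma_i$ when $\rho(\gamma_i)$ is parabolic; it is computed by differentiating $\rho_t(\gamma_i)$ and projecting onto $\mathbf I_i$. Independence is thus equivalent to $(d\,\mathrm{tr}_{\gamma_1},\dots,d\,\mathrm{tr}_{\gamma_n})$ being a coframe of $\mathrm X_0(M)$ at $[\rho]$. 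This holds at $\rho_0$ by Thurston–Neumann–Zagier, who show that the log-holonomies of any system of peripheral curves are local coordinates near the complete structure. The nondegeneracy of the corresponding Jacobian is a Zariski open condition on $\mathrm X_0(M)$, and we add it to the open set. The main obstacle is this last step: making the pairing computation precise, with the correct normalisation of $\mathbf I_i$ at parabolic points, and proving that the local-coordinate property persists on a Zariski open set rather than only near $\rho_0$. I would handle the latter by noting that a nonzero Jacobian at one point of the irreducible variety $\mathrm X_0(M)$ gives a nonzero regular function, hence a nonvanishing locus that is Zariski open and dense.
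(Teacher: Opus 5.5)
The paper does not prove this statement; it quotes it from Porti's memoir. Your skeleton is the standard one: vanishing of $H_0$ and $H_3$, $\chi(M)=0$, the boundary long exact sequence for (iii), and pairing $\mathbf I_i\otimes[\gamma_i]$ against infinitesimal deformations for (ii).

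\textbf{The dimension count.} The step that carries all the weight, $\dim H^1(M;\mathrm{Ad}\rho)=n$, does not follow from what you invoke. Weil's theorem identifies $Z^1(\pi_1M;\mathrm{Ad}\rho)$ with the Zariski tangent space of the representation \emph{scheme} cut out by the relations of $\pi_1M$. Nothing you say rules out that this scheme is non-reduced along $\mathrm X_0(M)$. Smoothness of the reduced variety $\mathrm X_0(M)$ at $[\rho]$ therefore only gives $\dim H^1(M;\mathrm{Ad}\rho)\ge n$.

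The upper bound needs a geometric input: infinitesimal rigidity of the complete structure (Garland--Raghunathan). This is the injectivity of $H^1(M;\mathrm{Ad}\rho_0)\to H^1(\partial M;\mathrm{Ad}\rho_0)$. Combined with the fact that the image is Lagrangian (``half lives, half dies''), it gives $\dim H^1(M;\mathrm{Ad}\rho_0)=n$. Upper semicontinuity of $\dim Z^1$ then gives $\dim H^1\le n$ on a Zariski open set containing $\chi_{\rho_0}$; here $Z^1$ is the kernel of a matrix depending polynomially on $\rho$, and $\dim B^1=3$ on irreducibles. Your injection $H_2(\partial M)\hookrightarrow H_2(M)$ already gives $\dim H_2\ge n$. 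So $\dim H_1=\dim H_2=n$ and (iii) follow from this upper bound, without appealing to Weil's identification.

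\textbf{The independence step in (ii).} The justification at $\rho_0$ does not work. The Neumann--Zagier log-holonomy coordinates live on the deformation space $\mathcal V(\mathcal T)$. Near the complete structure this maps to $\mathrm X_0(M)$ as a branched cover, since $H(\gamma_i)\mapsto -H(\gamma_i)$ gives the same character. Hence $H(\gamma_i)$ is not a differentiable function on $\mathrm X_0(M)$ at $\chi_{\rho_0}$. Also $\mathrm{tr}_{\gamma_i}=\pm 2\cosh(H(\gamma_i)/2)$ pulls back with zero differential at the complete structure. So Neumann--Zagier says nothing directly about $d\,\mathrm{tr}_{\gamma_i}$ at $\chi_{\rho_0}$.

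Your separate parabolic case, and the normalization worry you flag, are unnecessary. Take $\rho(\gamma)=\pm\begin{pmatrix}1&b\\0&1\end{pmatrix}$ with $b\neq 0$, $\mathbf I=\mathfrak e$, and a deformation $\rho_t$ with cocycle $u$. Then $\frac{d}{dt}\mathrm{tr}\,\rho_t(\gamma)=\pm b\,B(u(\gamma),\mathfrak e)$. So the pairing is proportional to $d\,\mathrm{tr}_{\gamma_i}$ whenever $\rho(\gamma_i)\neq\pm 1$, and the case to exclude is $\rho(\gamma_i)=\pm1$.

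The repair is to use the Neumann--Zagier chart away from the branch locus. At points with all $H(\gamma_i)\neq 0$, the map $(\mathrm{tr}_{\gamma_1},\dots,\mathrm{tr}_{\gamma_n})$ is open. So it is dominant on $\mathrm X_0(M)$, hence generically \'etale, which is exactly the Zariski-open coframe condition you need.

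\textbf{A smaller omission in (c).} Abelian subgroups of $PSL(2,\mathbb C)$ can also be Klein four-groups, generated by the images of $\mathrm{diag}(i,-i)$ and $\begin{pmatrix}0&1\\-1&0\end{pmatrix}$. These have no nonzero invariant vector, so they must be removed by an extra Zariski-open condition.
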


\begin{definition}\label{reg} Let $\boldsymbol\gamma=(\gamma_1,\dots,\gamma_n)$ be a system of simple closed curves on $\partial M$ with $\gamma_i\subset T_i,$ $i\in\{1,\dots,n\}.$  A character $[\rho]$ in $\mathrm X^n(M)\cap\mathrm X^{\text{irr}}(M)$ is \emph{$\boldsymbol\gamma$-regular} if condition (ii) in Theorem \ref{HM} is satisfied.
\end{definition}

It follows that for any system of simple closed curves $\boldsymbol \gamma$ on $\partial M,$  the $\boldsymbol \gamma$-regular characters are smooth points of $\mathrm X(M);$ and the logarithmic holonomies of $\boldsymbol \gamma$  form local coordinates of $\mathrm X(M)$ near each of the $\boldsymbol \gamma$-regular characters. Here for a $\mathrm{PSL}(2;\mathbb C)$-character $[\rho],$ the logarithmic holonomy $H(\gamma_{i})$ of $\gamma_i$ is defined up to sign as the logarithm of the ratio of the eigenvalues of $\rho([\gamma_i]).$

\begin{definition} \label{ATRT}
The adjoint twisted Reidemeister torsion of $M$ with respect to $\boldsymbol\gamma$ is the function 
$$\mathrm{tor}(M, -, \boldsymbol{\gamma}): \mathrm X^n(M)\cap\mathrm X^{\text{irr}}(M)\to\mathbb C/\{\pm 1\}$$ 
defined by
$$\mathrm{tor}(M, \rho, \boldsymbol{\gamma})=\mathrm{tor}(M, \{\mathbf h^1_{(M,\gamma)},\mathbf h^2_M\};\mathrm{Ad}_\rho)$$
if $\rho$ is $\boldsymbol \gamma$-regular, and by $0$ otherwise. 
\end{definition}

\begin{theorem}\cite[Theorem 4.1, see also Theorem 2.8 of \cite{WY24}]{Por97}\label{thm:rationalfunction}
Let $M$ be a compact, orientable 3-manifold with boundary consisting of $n$ disjoint tori whose interior admits a complete hyperbolic structure with finite volume. Let $\boldsymbol{\gamma}$ be a system of simple closed curves on $\partial M$, Then $\mathrm{tor}(M, -, \boldsymbol{\gamma})$ is a rational function over $\mathrm X^{n}(M)\cap \mathrm  X^{irr}(M)$. In addition, if a component $\mathrm X_{k}(M)$ contains one $\boldsymbol{\gamma}$-regular character, then the support of $T_{(M,\boldsymbol\gamma)}$ constains a Zariski-open subset of $\mathrm X_{k}(M)\cap \mathrm X^{irr}(M)$. 
\end{theorem}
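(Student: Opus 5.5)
The plan is to realize the torsion as a ratio of determinants of matrices whose entries are regular functions on the representation variety. We then check that these ratios are conjugation invariant, so they descend to $\mathrm X(M)$. Fix a finite CW-complex $K$ for $M$ (for instance, the one dual to a triangulation), with a single $0$-cell and a presentation of $\pi_1(M)$ read off from the $2$-cells. Lift $\rho$ locally to $SL(2,\mathbb C)$; the adjoint action does not see the sign. In the standard basis $\boldsymbol c_k$, the boundary maps of $C_\bullet(K;\mathrm{Ad}\rho)$ are given by Fox-calculus matrices. Their entries are polynomials in the matrix entries of $\rho$ on the generators, hence regular functions on the representation variety $R(M)$.

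Next, make the homology bases algebraic. For each torus $T_i$ pick a peripheral element $g_i$; near a $\boldsymbol\gamma$-regular point we may take $g_i=\gamma_i$. Set
$$\mathbf I_i(\rho)=\rho(g_i)-\tfrac12\mathrm{tr}\,\rho(g_i)\,\mathrm{Id}\in\mathfrak{sl}_2(\mathbb C).$$
This vector is polynomial in $\rho$ and is invariant under $\mathrm{Ad}_\rho(\pi_1(T_i))$, since $\pi_1(T_i)$ is abelian. It is nonzero off a proper Zariski-closed set, and there we can switch to a different peripheral element. The lifts $\mathbf I_i\otimes\gamma_i$ and $\mathbf I_i\otimes T_i$ are then explicit chains with polynomial coefficients. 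Rescaling $\mathbf I_i$ by $\lambda_i$ changes $h^1$ and $h^2$ by the same factor. These enter the torsion with opposite exponents $(-1)^{k+1}$, so the torsion does not depend on the normalization of $\mathbf I_i$. The chains $\boldsymbol b_k$ can also be chosen algebraically. Fix, at a given $\boldsymbol\gamma$-regular point, a set of columns of $\partial$ that span the image together with the homology lifts. Then each factor $[\boldsymbol b_k\sqcup\widetilde{\boldsymbol b}_{k-1}\sqcup\widetilde{\boldsymbol h}_k;\boldsymbol c_k]$ is a minor of a polynomial matrix. This expresses the torsion as a ratio of polynomials on the Zariski-open set where the chosen minors do not vanish. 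On overlaps, the independence of the choice of $\boldsymbol b_k$ and of the lifts shows that these local expressions agree. Hence they glue to a single rational function on $R(M)$, well defined up to $\pm1$.

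For descent, conjugating $\rho$ by $g$ changes the standard basis by $\mathrm{Ad}_g$ on each cell. Each determinant is multiplied by $\det\mathrm{Ad}_g=1$, and $\mathbf I_i$ is transported to $\mathrm{Ad}_g\mathbf I_i$. So the function is constant on orbits. On the irreducible locus the quotient $R^{irr}\to \mathrm X^{irr}$ is a geometric quotient, so invariant rational functions descend; invariance under elementary expansions and subdivisions shows independence of $K$.

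For the support statement, note that $\boldsymbol\gamma$-regularity is an open rank condition. It requires that $\dim H_1=n$, $\dim H_0=\dim H_3=0$, and that the classes $\mathbf I_i\otimes\gamma_i$ are independent in $H_1$; each of these is the nonvanishing of some minor of a polynomial matrix. On an irreducible component $\mathrm X_k(M)$, a nonempty Zariski-open set is dense. At a regular point the torsion is a product of determinants of genuine changes of basis, hence nonzero. So the support contains this open dense set.

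The step I expect to be the main obstacle is making the homology lifts uniformly algebraic. Near parabolic characters $\mathbf I_i$ still exists, but $\rho(\gamma_i)$ may degenerate. One also has to verify that the chosen polynomial lift of $\mathbf I_i\otimes\gamma_i$ is really a cycle and not just invariant pointwise. I would first try Porti's approach: work on the open set where $\rho(\gamma_i)\neq\pm\mathrm{Id}$, and use the cup-product/duality pairing with $H^1(T_i;\mathrm{Ad}\rho)$ to certify independence algebraically.
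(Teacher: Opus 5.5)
Your proposal is correct and follows essentially Porti's standard argument, which the paper cites rather than reproves: boundary matrices polynomial in $\rho$, the algebraic invariant vector $\rho(\gamma_i)-\tfrac12\mathrm{tr}\,\rho(\gamma_i)\,\mathrm{Id}$ whose normalization cancels between $\boldsymbol h^1$ and $\boldsymbol h^2$, descent via conjugation invariance, and Zariski-openness of $\boldsymbol\gamma$-regularity for the support claim. The obstacle you anticipate is not a real one: the trace-free part of a nontrivial parabolic is a nonzero nilpotent, so only $\rho(\gamma_i)=\pm\mathrm{Id}$ forces a change of peripheral element, and $\tilde\gamma_i\otimes\mathbf I_i$ is a cycle identically because $\mathrm{Ad}_{\rho(\gamma_i)}$ fixes $\mathbf I_i$; for $\mathbf I_i\otimes T_i$ you only need the $SL(2,\mathbb C)$-lifts of the peripheral images to commute, which holds on any component containing a $\boldsymbol\gamma$-regular character.
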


\subsection{Torsion Formulas}\label{sec:2.2}
The main computational tool for torsion is Milnor's multiplicativity theorem \cite{Mil66}; see also \cite{Por97}.

\begin{theorem}\label{thm:milnor}[Milnor \cite{Mil66}]
Let
$$0\to C_{\bullet}'\to C_{\bullet}\to C_{\bullet}''\to 0$$
be a short exact sequence of finite based chain complexes. Fix homology bases $\boldsymbol{h}_{\bullet}, \boldsymbol{h}_{\bullet}'$ and $\boldsymbol{h}_{\bullet}''$ for $C_{\bullet}$, $C_{\bullet}'$ and $C_{\bullet}''$ respectively. Let $\mathcal{H}_{\bullet}$ be the associated long exact sequence of homology groups, viewed as a based acyclic chain complex with basis $\{\boldsymbol{h}_{\bullet}', \boldsymbol{h}_{\bullet}, \boldsymbol{h}_{\bullet}''\}$. Then
$$\mathrm{tor}(C_{\bullet}; \boldsymbol{h}_{\bullet})=\pm\mathrm{tor}(C_{\bullet}'; \boldsymbol{h}_{\bullet}')\mathrm{tor}(C_{\bullet}''; \boldsymbol{h}_{\bullet}'')\mathrm{tor}(\mathcal{H}_{\bullet}).$$
\end{theorem}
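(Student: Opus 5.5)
Milnor's multiplicativity theorem (Theorem \ref{thm:milnor}) is a classical result, so my plan is to reproduce its standard proof. The first step is a reduction. Because the torsion is independent of the choices of the $\boldsymbol b_k$ and of the lifts, I will make compatible choices across the three complexes and then compare determinants degree by degree.

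Write $i\colon C'_\bullet\to C_\bullet$ and $p\colon C_\bullet\to C''_\bullet$. Basing $C_k$ by $i(\boldsymbol c'_k)\sqcup \widetilde{\boldsymbol c''_k}$, where the second part is any lift of $\boldsymbol c''_k$, changes the torsion only by the determinant of the change of basis to $\boldsymbol c_k$. In a based short exact sequence this determinant is $1$, since the bases are compatible by hypothesis; otherwise it contributes a factor that is absorbed in the statement. So I may assume $\boldsymbol c_k = i(\boldsymbol c'_k)\sqcup\widetilde{\boldsymbol c''_k}$.

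The second step reduces to the case where $\mathcal H_\bullet$ is trivial, that is, $H_\bullet(C')\to H_\bullet(C)\to H_\bullet(C'')$ is short exact in every degree and the connecting maps vanish. Here I choose $\boldsymbol b_k$ for $C$ as $i(\boldsymbol b'_k)$ together with lifts of $\boldsymbol b''_k$. The lifts $\widetilde{\boldsymbol b}$ and $\widetilde{\boldsymbol h}$ in $C$ are chosen the same way, assembled from the lifts for $C'$ and $C''$. Each transition matrix $[\boldsymbol b_k\sqcup\widetilde{\boldsymbol b}_{k-1}\sqcup\widetilde{\boldsymbol h}_k;\boldsymbol c_k]$ is then block upper triangular. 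Its diagonal blocks are the corresponding matrices for $C'$ and $C''$, so the determinants multiply, up to the sign from reordering. Taking the alternating product gives $\mathrm{tor}(C)=\pm\mathrm{tor}(C')\mathrm{tor}(C'')$. In this case $\mathrm{tor}(\mathcal H_\bullet)=1$ once the bases $\boldsymbol h$ are chosen compatibly. A change of the homology bases changes both sides by the same determinant.

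The general case is the main obstacle, because the connecting homomorphism $\delta\colon H_k(C'')\to H_{k-1}(C')$ spoils the block-triangular compatibility: a lift of a cycle of $C''$ has boundary equal to a nonzero cycle of $C'$. I would handle this by filtering the long exact sequence. Split each $H_k(C'')$ as $\ker\delta\oplus$ (a complement), and split $H_{k-1}(C')$ as $\operatorname{im}\delta\oplus$ (a complement), and similarly at the other two maps. Elements of the complement of $\ker\delta$ then lift to chains of $C_k$ whose boundaries lie in $i(C'_{k-1})$ and represent $\operatorname{im}\delta$. These chains serve as part of $\widetilde{\boldsymbol b}_{k-1}$ in $C$, replacing homology classes in $C'$ and $C''$. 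The resulting discrepancy is exactly the determinant of the bases of $\mathcal H_\bullet$ against the images and kernels at each spot, and collecting these factors with the signs $(-1)^{k+1}$ gives $\mathrm{tor}(\mathcal H_\bullet)$.

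An alternative route avoids this bookkeeping. Both sides are multiplicative, so it suffices to verify the formula after replacing the sequence by a split-acyclic decomposition. Each complex is the direct sum of an acyclic part and its homology with zero differential. The sequence then reduces to an acyclic short exact sequence, handled by the block-triangular argument, plus the long exact homology sequence itself, viewed as an acyclic complex whose torsion appears directly. I expect the careful sign and ordering conventions to be the only remaining tedium, and these are absorbed by the $\pm$.
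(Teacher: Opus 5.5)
The paper does not prove this statement. It quotes it from Milnor \cite{Mil66} and uses it only for $0\to\mathcal B_\bullet\to\mathcal K_\bullet\to\mathcal A_\bullet\to 0$ with $\mathcal B_\bullet$ acyclic. There every connecting map vanishes, so your Step~2 alone already covers what the paper needs. Your main route (compatible bases, a block-triangular comparison, then absorbing the connecting map into the choice of boundaries) is the standard argument and can be completed. As written, however, it contains one false remark, leaves the decisive step unverified, and the alternative route does not work.

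\emph{Step 1.} The change-of-basis factor is not ``absorbed in the statement.'' The $\pm$ absorbs only signs. Rescaling a single basis vector of some $C_k$ changes $\mathrm{tor}(C_\bullet)$ but none of the three factors on the right. Compatibility, $[i(\boldsymbol c'_k)\sqcup\widetilde{\boldsymbol c''_k};\boldsymbol c_k]=\pm1$, is therefore a genuine hypothesis. It is implicit in the paper's wording, and it holds in the paper's application because $\mathcal K_\bullet$ is based by the union of the bases of $\mathcal A_\bullet$ and $\mathcal B_\bullet$.

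\emph{Step 3.} The claim that the discrepancy ``is exactly'' $\mathrm{tor}(\mathcal H_\bullet)$ is the whole theorem, so it has to be carried out. You need the two facts that measure how Step~2 fails:
\begin{itemize}
\item $y\mapsto[i^{-1}\partial y]$ induces $\bigl(B_k(C)\cap i(C'_k)\bigr)/i(B_k(C'))\cong\mathrm{Im}\,\delta_{k+1}$;
\item $p(Z_k(C))$ consists of the cycles of $C''_k$ whose classes lie in $\ker\delta_k$.
\end{itemize}
With these, take $\boldsymbol b_k(C)=i(\boldsymbol b'_k)\sqcup\{\partial y\}\sqcup\{\partial\widetilde w\}$. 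Here $y\in C_{k+1}$ lifts cycles whose classes map under $\delta_{k+1}$ to a basis of $\mathrm{Im}\,\delta_{k+1}$, and $\widetilde w$ lifts $\widetilde{\boldsymbol b''_k}$. The analogous chains $y\in C_k$, one degree down, belong to $\widetilde{\boldsymbol b}_{k-1}(C)$.

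The degree-$k$ matrix is then block triangular. Its diagonal blocks are the matrices of $C'$ and $C''$, taken with respect to the following homology bases:
\begin{itemize}
\item for $H_k(C')$: a basis of $\mathrm{Im}\,\delta_{k+1}$ together with a lift of a basis of $\mathrm{Im}\,i_*$;
\item for $H_k(C'')$: a basis of $\ker\delta_k$ together with a lift of a basis of $\mathrm{Im}\,\delta_k$;
\item in place of $\boldsymbol h_k$: a basis of $\mathrm{Im}\,i_*$ together with a lift of a basis of $\ker\delta_k$.
\end{itemize}
These are precisely the $\boldsymbol b\sqcup\widetilde{\boldsymbol b}$ bases of $\mathcal H_\bullet$.

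Converting back to $\boldsymbol h'_k$ and $\boldsymbol h''_k$ gives factors with exponent $(-1)^{k+1}$. Converting back to $\boldsymbol h_k$ gives exponent $(-1)^{k}$, because $\boldsymbol h_k$ sits on the left-hand side. This matches $\mathrm{tor}(\mathcal H_\bullet)$ for the grading $\mathcal H_{3k+2}=H_k(C')$, $\mathcal H_{3k+1}=H_k(C)$, $\mathcal H_{3k}=H_k(C'')$. Shifting that grading inverts $\mathrm{tor}(\mathcal H_\bullet)$, which the $\pm$ does not absorb, so the convention must be fixed in the statement.

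\emph{Alternative route.} This route fails. A short exact sequence of complexes does not in general split, compatibly with $i$ and $p$, into an acyclic short exact sequence plus a short exact sequence of complexes with zero differential. If it did, the homology sequence would be $0\to H(C')\to H(C)\to H(C'')\to 0$ in every degree, that is, $\delta=0$, which is just the case of Step~2. Each complex splits separately over a field, but $i$ and $p$ cannot respect those splittings when $\delta\neq0$. Moreover, ``both sides are multiplicative'' presupposes what is being proved. The general case has to go through the bookkeeping of Step~3.
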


Another torsion formula is the twisted Poincar\'e duality, which identifies the Adjoint twisted Reidemeister torsion of $M$ and that of $(M, \partial M)$.

Suppose $K$ is a cell-decomposition of a $3$-manifold $M$ with nonempty boundary. Then its dual complex $K^{\dagger}$ defines a cell decomposition of the pair $(M, \partial M)$. The cellular intersection together with the following invariant bilinear form
$$B(X, Y):=\mathrm{tr}(XY)$$
of $\mathfrak{sl}_{2}(\mathbb C)$ induces perfect pairings
$$C_{3-i}(K, \rho)\times C_{i}(K^{\dagger}, \rho)\to \mathbb{C}.$$
By the twisted Poincar\'e duality. This induces a perfect pairing of homology
\begin{equation}\label{eqn:poincare}
H_{i}(M;\mathrm{Ad}\rho)\otimes H_{3-i}(M, \partial M;\mathrm{Ad}\rho)\to \mathbb{C}.
\end{equation}
The following theorem by Milnor \cite{Mil62} relates the torsion of the dual chain complexes.
\begin{theorem}\label{thm:poincaredual}
Let $\boldsymbol{h}_{k}$ be basis of $H_{i}(M;\mathrm{Ad}\rho)$, and let $\boldsymbol{h}_{3-k}^{\dagger}$ be dual basis with respect to the perfect pairing \eqref{eqn:poincare}. Then
\begin{equation}
\mathrm{tor}(M, \boldsymbol{h}_{k};\mathrm{Ad}\rho)=\pm \mathrm{tor}(M, \partial M, \boldsymbol{h}_{k}^{\dagger};\mathrm{Ad}\rho)
\end{equation}
\end{theorem}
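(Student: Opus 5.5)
The plan is to realize both torsions with a pair of mutually dual cell decompositions and compare them cell by cell through the duality pairings. Let $K$ be a cell decomposition of $M$ and $K^{\dagger}$ its dual decomposition of $(M,\partial M)$. For each $i$, the cellular intersection pairing together with $B(X,Y)=\mathrm{tr}(XY)$ gives a nondegenerate pairing
$$\langle\cdot,\cdot\rangle: C_{3-i}(K;\mathrm{Ad}\rho)\times C_i(K^{\dagger};\mathrm{Ad}\rho)\to\mathbb C.$$
Equivariance of the pairing, combined with the fact that $\mathrm{Ad}\rho$ preserves $B$, shows that the boundary of $C_\bullet(K^\dagger)$ is the adjoint (transpose) of the boundary of $C_\bullet(K)$, up to the usual sign $\pm$.

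First I fix a standard basis $\boldsymbol{c}_{3-i}$ of $C_{3-i}(K)$ built from lifts of cells and a basis $\boldsymbol e_j$ of $\mathfrak{sl}_2$. I then choose $\boldsymbol c_i^\dagger$ to be the dual basis under the pairing, using the dual lifts of the dual cells and the $B$-dual basis $\boldsymbol e_j^*$. By the independence statement recalled from \cite{Por18}, the torsion does not depend on the choice of Lie algebra basis or of lifts. So $\boldsymbol c_i^\dagger$ is a legitimate standard basis of $C_i(K^\dagger)$, and its change of basis to any other standard basis has determinant $\pm1$. This is because $\det B\neq 0$ is a constant and it cancels in the alternating product, since $\sum_i(-1)^i\dim C_i=3\chi(M)=0$ for toroidal boundary.

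Next comes the core linear algebra. For a based complex $C$ and its dual complex $C^*$ (with dual bases and dual differentials), choose for each $k$ a basis $\boldsymbol b_k$ of the boundaries, lifts $\widetilde{\boldsymbol b}_{k-1}$, and lifts of homology $\widetilde{\boldsymbol h}_k$. This gives a basis of $C_k$ adapted to the splitting $B_k\oplus \widetilde{B}\oplus \widetilde H_k$. The dual basis of $C_k^*$ is then adapted to the corresponding splitting of $C^*$. The dual of $\widetilde{\boldsymbol h}_k$ projects to a basis of $H^k$ that is dual to $\boldsymbol h_k$ under the induced homology pairing, i.e. it is exactly $\boldsymbol h^\dagger$. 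The boundaries and chosen lifts swap roles between the two complexes. The transition matrix for $C^*$ is the inverse transpose of the one for $C$, so each determinant $[\cdot;\cdot]$ is inverted. At the same time the degree $k$ of $C$ becomes degree $3-k$, which flips the sign of the exponent by $(-1)^{3}$. The two inversions cancel, giving $\mathrm{tor}(C^*,\boldsymbol h^\dagger)=\pm\mathrm{tor}(C,\boldsymbol h)$, which is the claim (Milnor's classical duality, \cite{Mil62}).

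I expect the main obstacle to be the bookkeeping in this last step. One has to check that the lifts chosen in $C^*$ genuinely induce the dual homology basis with respect to the pairing \eqref{eqn:poincare}, rather than a basis differing by a nontrivial triangular correction. My plan is to choose the lifts $\widetilde{\boldsymbol h}_k$ orthogonal to a chosen complement of the cycles. This forces the pairing matrix between the homology lifts in $C$ and in $C^*$ to be the identity, so any remaining discrepancy is unipotent and does not affect the determinants. Finally, I will verify that the exponent parity works out, giving the sign $\pm$ rather than a reciprocal. This is where $\dim M=3$ is used: the exponent changes from $(-1)^{k+1}$ to $(-1)^{(3-k)+1}=(-1)^{k}$, and this change compensates the inversion.
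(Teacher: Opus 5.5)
Your proposal is correct and is the classical Milnor duality argument that the paper relies on; the paper gives no proof of its own and simply cites \cite{Mil62}. Your key steps are right: dual bases swap the roles of the boundary bases and their lifts, every transition determinant is inverted, and the reindexing $k\mapsto 3-k$ flips each exponent, so the two effects cancel because $\dim M=3$ is odd.

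The obstacle you anticipate is in fact automatic. The dual functionals $\widetilde{\boldsymbol h}_k^{*}$ vanish on $\mathrm{Im}\,\partial_{k+1}$, so they are cocycles, and they pair to the identity with $\widetilde{\boldsymbol h}_k$; hence they already represent $\boldsymbol h^{\dagger}$ and no orthogonality choice is needed. One small correction: in each degree, the change to the $B$-dual Lie algebra basis has determinant a power of the Gram determinant of $B$, not $\pm1$. Only the alternating product equals $1$, because the Euler characteristic vanishes, as you then correctly say.
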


Doing Gaussian elimination by matrices of determinant $1$ does not change the determinant. The next torsion formula is the chain-complex analog.
\begin{lemma}\label{lem:retraction}
Let $C_{\bullet}$ be a chain complex, and for some $i$, $C_{i}=C_{i}'\oplus D_{i}$ and $C_{i-1}=C_{i-1}'\oplus D_{i-1}$. Suppose 
$$\partial_{i}^{C}=\begin{bmatrix}
\alpha & \beta\\
\gamma & \delta
\end{bmatrix},$$
and $\delta: D_{i}\to D_{i-1}$ is an isomorphism. Then ${C}_{\bullet}$ is chain-homotopy equivalent to
$$C'_{\bullet}:=(\ldots\to C_{i-1}\xrightarrow{p_{C_{i+1}'}\circ \partial_{i+1}^{C}} C_{i}'\xrightarrow{\alpha-\beta\circ \delta^{-1}\circ \gamma}C_{i-1}\xrightarrow{\partial_{i-1}^{C}|_{C_{i}'}} C_{i-2}\to\ldots),$$
where $p_{C_{i+1}'}$ is the projection of $C_{i}$ to $C_{i}'$. In addition, if $C_{\bullet}$ is a based chain complex and $\delta=I$ with respect to the chosen bases. Then
$$\mathrm{tor}(C_{\bullet}, \boldsymbol{h}_{\bullet})=\pm\mathrm{tor}(C_{\bullet}', \boldsymbol{h}_{\bullet}).$$
Here we identify the homology groups of $C_{\bullet}$ and $C_{\bullet}'$ since they are homotopy equivalent.
\end{lemma}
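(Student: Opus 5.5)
The statement to prove is Lemma~\ref{lem:retraction}. I would first build the chain homotopy equivalence explicitly and then compare torsions via Milnor's multiplicativity, Theorem~\ref{thm:milnor}.

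\textbf{Step 1: split off an elementary complex.} Change bases in $C_i$ and $C_{i-1}$ by unitriangular automorphisms. Set $\phi_i=\begin{bmatrix}I&0\\-\delta^{-1}\gamma&I\end{bmatrix}$ on $C_i=C_i'\oplus D_i$ and $\psi_{i-1}=\begin{bmatrix}I&-\beta\delta^{-1}\\0&I\end{bmatrix}$ on $C_{i-1}=C_{i-1}'\oplus D_{i-1}$. A direct computation gives
\[
\psi_{i-1}^{-1}\,\partial_i\,\phi_i=\begin{bmatrix}\alpha-\beta\delta^{-1}\gamma&0\\0&\delta\end{bmatrix}.
\]
Replace the differentials $\partial_{i+1}$ and $\partial_{i-1}$ by the correspondingly conjugated maps. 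This gives a chain complex isomorphic to $C_\bullet$ through maps of determinant $1$ in each degree.

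Next, check that the conjugated complex is a direct sum $C'_\bullet\oplus E_\bullet$, where $E_\bullet=(D_i\xrightarrow{\delta}D_{i-1})$. Two facts make this work:
\begin{itemize}
\item Since $\partial_i\partial_{i+1}=0$, the image of $\partial_{i+1}$ has no component in $D_i$ after conjugation, because $\delta$ is injective. Hence $\partial_{i+1}$ lands in $C_i'$ and equals $p_{C_i'}\circ\partial_{i+1}$ in the new splitting.
\item Dually, $\partial_{i-1}\circ\partial_i=0$ forces $\partial_{i-1}$ to vanish on the $D_{i-1}$ summand. Its restriction to $C'_{i-1}$ is then the map written in the statement.
\end{itemize}
Since $E_\bullet$ is acyclic, the projection $C_\bullet\to C'_\bullet$ and the inclusion are inverse homotopy equivalences. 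In particular they identify homology, which justifies using the same $\boldsymbol h_\bullet$ on both sides.

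\textbf{Step 2: compare torsions.} Now use the split short exact sequence $0\to E_\bullet\to C_\bullet\to C'_\bullet\to 0$ with the given bases. Apply Theorem~\ref{thm:milnor}:
\begin{itemize}
\item The homology long exact sequence is trivially based, because $H(E)=0$ and the map $H(C)\to H(C')$ is the identity on $\boldsymbol h_\bullet$. So $\mathrm{tor}(\mathcal H_\bullet)=\pm1$.
\item The torsion of $E_\bullet$ is $\det(\delta)^{\pm1}$, which equals $\pm1$ when $\delta=I$ in the chosen bases.
\item The basis compatibility required by Milnor's theorem holds because the change of basis $\phi,\psi$ is unitriangular, so it contributes determinant $1$.
\end{itemize}
Together these give $\mathrm{tor}(C_\bullet,\boldsymbol h_\bullet)=\pm\mathrm{tor}(C'_\bullet,\boldsymbol h_\bullet)$.

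\textbf{Main obstacle.} The only delicate point is bookkeeping: checking that the modified differentials adjacent to degree $i$ really decouple from $D_\bullet$, and that all basis changes are unimodular so no extra factor appears. Both reduce to the relations $\partial^2=0$ and the invertibility of $\delta$ noted in Step 1.
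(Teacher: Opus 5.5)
Your proof is correct and is essentially the argument the paper delegates to its citations. Bar-Natan's Gaussian elimination lemma is proved by exactly this unitriangular conjugation, which splits off the contractible summand $D_i\xrightarrow{\delta}D_{i-1}$, and the torsion equality is multiplicativity of torsion with $\mathrm{tor}(E_\bullet)=\det(\delta)^{\pm1}=\pm1$.

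One small slip: with $\psi_{i-1}$ as you defined it, $\psi_{i-1}^{-1}\partial_i\phi_i$ has upper-right block $2\beta$, not $0$. You want $\psi_{i-1}\partial_i\phi_i$ instead (equivalently, flip the sign of $\beta\delta^{-1}$ in $\psi_{i-1}$). Nothing else changes, since the map is still unitriangular.
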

The first assertion is the Gaussian elimination lemma for chain complexes; See \cite[Lemma 3.2]{Bar07}. The torsion assertion follows from the standard functoriality of the torsion of based complexes; see \cite{Tur01}.

\subsection{Neumann-Zagier Datum and gluing structure}\label{sec:2.3}
Let $M$ be the interior of a compact oriented $3$-manifold whose boundary is homeomorphic to union of $k$ tori, and let $\mathcal{T}$ be an ideal triangulation with its set of vertices, edges, faces and tetrahedron denoted by $V, E, F$ and $T$ respectively. Choose a quad type in each tetrahedron and let $G, G'$ and $G''$ be the edge-quad adjacency matrices, they are of size $|E|\times |T|$. Let $\gamma$ be a boundary peripheral curve. Define $|T|$-dimensional row vectors $G_{\gamma}, G_{\gamma}'$ and $G_{\gamma}''$ recording the corners on the left of $\gamma$ minus the corners on the right of $\gamma$. Denote the matrices
$$\begin{array}{cc}
    A=G-G', & B=G''-G'. \\
    A_{\gamma}=G_{\gamma}-G_{\gamma}', & B_{\gamma}=G_{\gamma}''-G_{\gamma}'.
\end{array}$$
These matrices have the following properties that come from the symplectic nature of Neumann-Zagier datum.
\begin{proposition}[\cite{DG13}]\label{prop:NZsymplectic}
$$AB^{T}-BA^{T}=0,\quad A_{\gamma}B^{T}-B_{\gamma}A^{T}=0$$
and for any pair of oriented peripheral curves $m$ and $l$, one have
$$A_{m}B_{l}^{T}-B_{m}A_{l}^{T}=2i(m, l),$$
where $i(m, l)$ is the algebraic intersection of $m$ and $l$ on $\partial M$.
\end{proposition}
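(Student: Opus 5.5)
The plan is Neumann's original local-to-global argument: rewrite each symplectic pairing as a sum over tetrahedra of a universal antisymmetric form on corner counts, and then identify that sum with an intersection pairing on the cusp tori.

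\emph{Step 1: a cyclic form.} For row vectors $x=(g,g',g'')$ and $y=(h,h',h'')$ in $(\mathbb{Z}^{T})^{3}$, set $a_x=g-g'$, $b_x=g''-g'$, and similarly for $y$. A direct expansion in each tetrahedron $\Delta$ gives
$$a_x b_y^T-b_x a_y^T=\sum_{\Delta\in T}\Omega_\Delta(x,y),$$
where
$$\Omega_\Delta(x,y)=(g_\Delta h''_\Delta-g''_\Delta h_\Delta)+(g''_\Delta h'_\Delta-g'_\Delta h''_\Delta)+(g'_\Delta h_\Delta-g_\Delta h'_\Delta).$$
So $\Omega_\Delta$ is the cyclically antisymmetric form in the three shape types $z\to z''\to z'$. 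Every assertion of Proposition~\ref{prop:NZsymplectic} is then a statement about the total $\sum_\Delta\Omega_\Delta$, taken over pairs of rows drawn from $(G,G',G'')$ and $(G_\gamma,G'_\gamma,G''_\gamma)$.

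\emph{Step 2: edges and peripheral curves as normal curves.} Truncate the ideal vertices, so each $\partial_i M$ carries the induced triangulation by vertex triangles. Each corner of a vertex triangle carries one of the labels $z,z',z''$, and opposite edges of $\Delta$ carry the same label.
\begin{itemize}
\item A peripheral curve $\gamma$, isotoped to be normal, crosses vertex triangles in arcs cutting off one corner, on its left or its right. By definition, $G_\gamma,G'_\gamma,G''_\gamma$ count these corners with sign.
\item An edge $e$ determines, at each of its two ends, a small loop in the cusp torus encircling the endpoint of $e$. The row of $e$ in $G,G',G''$ is exactly the corner count of these loops, counted once for each incidence of $e$ with $\Delta$.
\end{itemize}
Thus every row in question is the corner vector of a normal multicurve on $\partial M$.

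\emph{Step 3: the local identity.} In a single vertex triangle, consider two corner-cutting arcs, one from each curve, with signs $\pm1$. Check case by case that the cyclic pairing of their corner labels equals the signed number of times the two arcs are forced to cross, up to a universal factor. The tetrahedron $\Delta$ has four vertex triangles, and each shape label occurs on two opposite edges. I expect this to produce the factor $2$, but the precise bookkeeping, where $\Omega_\Delta$ sees the sum over all four vertex triangles, is the main obstacle.

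The case analysis needs care on three points:
\begin{itemize}
\item orientation conventions for left versus right corners;
\item arcs cutting the same corner, which can be pushed off and contribute $0$;
\item edges meeting $\Delta$ several times, which is handled by linearity, since the rows are sums of corner vectors.
\end{itemize}
Summing the local identity over all triangles gives
$$a_xb_y^T-b_xa_y^T=2\,\iota(x,y),$$
where $\iota$ is the homological intersection number on $\partial M$ of the two normal multicurves.

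\emph{Step 4: conclusion.}
\begin{itemize}
\item Each edge loop bounds a disk around a vertex of the cusp triangulation, so it is null-homologous on $\partial M$. Hence $\iota$ vanishes whenever one argument comes from an edge, which gives $AB^T-BA^T=0$ and $A_\gamma B^T-B_\gamma A^T=0$.
\item For peripheral curves $m$ and $l$, Step 3 gives $A_mB_l^T-B_mA_l^T=2i(m,l)$ directly.
\end{itemize}
If the global count in Step 3 proves delicate, I would instead verify the identity on a generating set: pairs of edge loops are checked locally around a single vertex, and for a pair $(m,l)$ on one cusp torus the value is computed once. Bilinearity, together with homotopy invariance of the corner count modulo edge loops, then extends the identity to all pairs.
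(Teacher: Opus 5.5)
The paper gives no proof of this proposition; it quotes it from \cite{DG13}, where it rests on Neumann--Zagier \cite{NZ85} and Neumann \cite{Neu92}. So your argument has to stand on its own, and it has a genuine gap.

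Your Step~1 expansion into the cyclic form is correct. Step~2 is fine, with one correction: the row of $e$ is the corner vector of the loop at \emph{one} end of $e$, since the two end-loops together give twice the row. The gap is Step~3, which carries the entire content of the proposition, and as formulated it cannot work, for two reasons.

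\begin{enumerate}
\item There is no termwise local identity of the kind you describe. Two normal arcs in a vertex triangle that cut different corners are never forced to cross, and two arcs at the same corner need not be disjoint. Whether two arcs cross is decided by the order of their endpoints on the sides, which is global data.
\item More seriously, $\Omega_\Delta$ is evaluated on corner vectors summed over all four vertex triangles of $\Delta$. So $\sum_\Delta\Omega_\Delta(x,y)$ contains cross terms $\omega(c_w(x),c_{w'}(y))$ with $w\neq w'$. Each pairs an arc of $x$ in one vertex triangle of $\Delta$ with an arc of $y$ in another, possibly on a different cusp torus. These terms correspond to no intersection point, and your proposal has no mechanism for them, so no conclusion about $\sum_\Delta\Omega_\Delta$ follows.
\end{enumerate}

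The fallback does not close the gap. The ``generator checks'' are just the assertions of the proposition, together with vanishing across distinct cusps. None of them is local around a single vertex, for the same reason as above.

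Here is what does hold locally. Let $n_a,n_b,n_c$ be the signed crossings of a curve with the sides of a vertex triangle $t$ opposite its corners $p,q,r$, ordered compatibly with $\omega$ and the left/right convention. Then
\[
\omega(c_t(x),c_t(y))=n_c(x)n_a(y)-n_a(x)n_c(y),
\]
which is the antisymmetrized Alexander--Whitney cup product of the two crossing cocycles. Summed over all triangles, the same-triangle terms give $2\langle\xi_x\cup\xi_y,[\partial M]\rangle=\pm2\,\iota(x,y)$. So the factor $2$ comes from antisymmetrization, not from opposite edges. The real task is to show that the cross terms cancel in total, and this is where the face gluings must enter.

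For the two isotropy relations this can be done directly. Pair the row of $e$ with a single arc of $\gamma$. Up to a global sign, the result is
\[
[\text{leading corner}]-[\text{trailing corner}]+[\text{entry face}]-[\text{exit face}].
\]
Each bracket is $1$ or $0$ according as the relevant tetrahedron edge is identified with $e$; for a face, the relevant edge is the one opposite the cusp vertex. Consecutive arcs share a glued face, so these terms telescope to $0$ around any closed normal curve. Taking $\gamma$ to be an end-loop of an edge gives $AB^T-BA^T=0$.

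For $A_mB_l^T-B_mA_l^T$ you still need a genuinely global argument that the cross terms cancel, for example Neumann's argument in \cite{Neu92}.
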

The Neumann-Zagier datum is useful for studying the deformation space of $M$. Associate to each tetrahedron $\Delta$ a shape parameter $z_{\Delta}\in \mathbb{C}\setminus\{0, 1\}$. Also define
$$z_{\Delta}'=\frac{1}{1-z_{\Delta}},\quad z_{\Delta}''=1-\frac{1}{z_{\Delta}}.$$
They are the shape parameters corresponding to the other two quad types of $\Delta$. The deformation space $\mathcal{V}(\mathcal{T})$ is defined to be the subset of the space of shapes $\mathcal Z(\mathcal{T})$ satisfying the gluing equations. In terms of Neumann-Zagier datum, the deformation space
$$\mathcal{V}(\mathcal{T})=\{z_{\Delta}\in \mathbb{C}\setminus\{0, 1\}|\sum_{\Delta} G_{e\Delta}\log z_{\Delta}+G_{e\Delta}'\log z_{\Delta}'+G_{e\Delta}''\log z_{\Delta}''=2\pi i \text{ for all $e$}\}.$$
If the tetrahedra with shapes $z_{\Delta}$ glue to a hyperbolic $3$-manifold with complete hyperbolic structure, then the completeness equation is also satisfied. In terms of Neumann-Zagier datum, for every peripheral curve $\gamma\subset \partial M$, we have
\begin{equation}\label{eq:completeness}
G_{\gamma\Delta}\log z_{\Delta}+G_{\gamma\Delta}'\log z_{\Delta}'+G_{\gamma\Delta}''\log z_{\Delta}''=0.
\end{equation}
A combinatorial flattening of $\mathcal{T}$ is an assignment of integers $f_{\Delta}, f_{\Delta}'$ and $f_{\Delta}''$ to the quads of each tetrahedron $\Delta$, subject to the following conditions:
$$f_{\Delta}+f_{\Delta}'+f_{\Delta}''=1,\quad \sum_{\Delta} G_{e\Delta}f_{\Delta}+G_{e\Delta}'f_{\Delta}'+G_{e\Delta}''f_{\Delta}''=2.$$
A flattening is called strong if
\begin{align*}\label{eq:strongflat}
\sum_{\Delta} G_{\gamma\Delta}f_{\Delta}+G_{\gamma\Delta}'f_{\Delta}'+G_{\gamma\Delta}''f_{\Delta}''=0
\end{align*}
for any peripheral curve $\gamma\subset \partial{M}$. It is proved in \cite[Lemma 6.1]{Neu92} that a strong flattening exists for any ideal triangulation.
\subsection{Angle structures}\label{sec:2.4}
Let $(M, \mathcal{T})$ be an ideally triangulated $3$-manifold with its boundary homeomorphic to a union of tori. An angle assignment on $(M, \mathcal{T})$ assigns each pair $(\Delta, e)$ such that $\Delta$ is adjacent to $e$ a number $\theta_{(\Delta, e)}\in (0, \pi)$, called the dihedral angle of $e$ in $\Delta$, so that the sum of the dihedral angles at three edges in the same tetrahedron $\Delta$ adjacent to each vertex equals $\pi$. These are exactly the conditions for six numbers in $(0, \pi)$ to be the dihedral angles of an ideal hyperbolic tetrahedron. The cone angle of an angle assignment is the assignment $(\theta_{1}, \ldots, \theta_{|E|})\in \mathbb R^{E}$ that assigns to each edge $e$ the sum of the dihedral angles at $e$. They are linear functionals on the space of angle assignments. Since the dihedral angles of an ideal tetrahedron are uniquely determined by its shape parameter $z\in \{\mathrm{Im} z>0\}$, there is a natural map from the space of shape parameters $\mathcal{Z(T)}$ to the space of angle assignments. The cone angles can be computed using Neumann-Zagier data:

\begin{equation}
\theta_{e}=\mathrm{Im}(\sum_{\Delta\in T}G_{e\Delta}\log z+G_{e\Delta}'\log z'+G_{e\Delta}''\log z'' )
\end{equation}
\subsection{Decorated ideal hyperbolic tetrahedra and Gram matrices}\label{sec:2.5}
A decoration of an ideal hyperbolic tetrahedron is an assignment of a horosphere for each ideal vertex. We call an ideal hyperbolic tetrahedron together with a decoration a decorated ideal hyperbolic tetrahedron. For a decorated ideal hyperbolic tetrahedron, the edge length $x_{ij}$ of the edge $e_{ij}$ is defined to be the signed distance between the points of intersection of $e_{ij}$ and the horospheres at the $i$-th and $j$-th vertices.

For a decorated ideal hyperbolic tetrahedron $\Delta$ with edge lengths $\boldsymbol{x}=(x_{12},\ldots, x_{34})$, the Gram matrix in the edge lengths is defined by
\begin{equation*}
\mathrm{Gram}(\Delta)=\left[\begin{array}{cccc}
 0&-\frac{e^{x_{12}}}{2}&-\frac{e^{x_{13}}}{2}& -\frac{e^{x_{14}}}{2}\\
-\frac{e^{x_{21}}}{2} &0&-\frac{e^{x_{23}}}{2}& -\frac{e^{x_{24}}}{2}\\
 -\frac{e^{x_{31}}}{2}&-\frac{e^{x_{32}}}{2}&0& -\frac{e^{x_{34}}}{2} \\
 -\frac{e^{x_{41}}}{2}&-\frac{e^{x_{42}}}{2}&-\frac{e^{x_{43}}}{2}&0
\end{array}\right]
\end{equation*}
A decorated ideal hyperbolic tetrahedron is determined by the edge lengths, as are the shape parameters $z_{\Delta}, z_{\Delta}'$ and $z_{\Delta}''$ of the underlying ideal hyperbolic tetrahedron. In particular, the absolute values of shape parameters could be expressed  simply in terms of edge lengths:
\begin{equation}\label{eqn:zinlength}|z_{\Delta}|=e^{\frac{x_{02}+x_{13}-x_{03}-x_{12}}{2}},\quad |z_{\Delta}'|=e^{\frac{x_{03}+x_{12}-x_{01}-x_{23}}{2}},\quad |z_{\Delta}''|=e^{\frac{x_{01}+x_{23}-x_{02}-x_{13}}{2}}.
\end{equation}
\begin{proposition}\label{prop:grambyz}
$$\det(\mathrm{Gram}(\Delta))=-\frac{1}{4}e^{2x_{03}+2x_{12}}\mathrm{Im}(z_{\Delta})^{2}$$
\end{proposition}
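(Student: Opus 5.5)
The plan is a direct computation: expand the $4\times 4$ determinant explicitly, recognize the result as Heron's formula for a triangle, and identify that triangle with the triangle in $\mathbb C$ spanned by $0,1,z_\Delta$ using \eqref{eqn:zinlength}.

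\emph{Expanding the determinant.} Write $a_{ij}=e^{x_{ij}}$ with $a_{ij}=a_{ji}$, labelling the vertices $0,1,2,3$ as in \eqref{eqn:zinlength}. Factoring $-\tfrac12$ out of each row gives $\det\mathrm{Gram}(\Delta)=\tfrac1{16}\det S$, where $S$ is the symmetric matrix with zero diagonal and off-diagonal entries $a_{ij}$. Expanding over permutations, only fixed-point-free ones contribute, namely the three double transpositions and the six $4$-cycles. Introduce the three products of opposite edges
$$P=a_{01}a_{23},\qquad Q=a_{02}a_{13},\qquad R=a_{03}a_{12}.$$
The double transpositions contribute $P^2+Q^2+R^2$. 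The $4$-cycles come in three inverse pairs, each pair contributing $-2PQ$, $-2QR$ or $-2RP$. Hence
$$\det S=P^2+Q^2+R^2-2PQ-2QR-2RP .$$

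\emph{Heron's formula.} For a triangle with side lengths $\sqrt P,\sqrt Q,\sqrt R$ and area $\mathcal A$, Heron's formula in the form $16\mathcal A^2=2PQ+2QR+2RP-P^2-Q^2-R^2$ gives
$$\det S=-16\mathcal A^2 .$$
So it remains to show that such a triangle exists and to compute its area in terms of $z_\Delta$.

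\emph{Identifying the triangle.} By \eqref{eqn:zinlength},
$$|z_\Delta|=\sqrt{Q/R},\qquad |z_\Delta''|=\sqrt{P/Q}.$$
Since $z_\Delta''=(z_\Delta-1)/z_\Delta$, this gives
$$|z_\Delta-1|=|z_\Delta|\,|z_\Delta''|=\sqrt{P/R}.$$
Consider the triangle in $\mathbb C$ with vertices $0,1,z_\Delta$.
\begin{enumerate}
\item Its side lengths are $1$, $\sqrt{Q/R}$ and $\sqrt{P/R}$.
\item Scaling it by $\sqrt R$ produces a genuine triangle with sides $\sqrt R,\sqrt Q,\sqrt P$, so no separate triangle-inequality check is needed.
\item The unscaled triangle has area $\tfrac12|\mathrm{Im}\,z_\Delta|$, so the scaled one has $\mathcal A=\tfrac12 R\,|\mathrm{Im}\,z_\Delta|$.
\end{enumerate}

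\emph{Conclusion.} Substituting, $16\mathcal A^2=4R^2\,\mathrm{Im}(z_\Delta)^2$, and therefore
$$\det\mathrm{Gram}(\Delta)=\tfrac1{16}\bigl(-4R^2\,\mathrm{Im}(z_\Delta)^2\bigr)=-\tfrac14 e^{2x_{03}+2x_{12}}\,\mathrm{Im}(z_\Delta)^2 .$$

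The only place needing care is the bookkeeping of the $4$-cycle signs and pairings, together with confirming that \eqref{eqn:zinlength} pairs the opposite edges as $P,Q,R$ above. Everything else is a direct calculation.
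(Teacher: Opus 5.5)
Your proposal is correct and follows essentially the paper's route: expand the determinant as $\tfrac1{16}(P^2+Q^2+R^2-2PQ-2QR-2RP)$ in the opposite-edge products, then use \eqref{eqn:zinlength} to rewrite this in terms of $|z_\Delta|$ and $|1-z_\Delta|$. The only difference is the last step, where you evaluate the resulting expression with Heron's formula for the triangle $0,1,z_\Delta$; the paper instead expands it directly in $\mathrm{Re}(z_\Delta)$ and $\mathrm{Im}(z_\Delta)$, so your version is a tidier way to do the same final algebra.
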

\begin{proof}
By direct computation,
\begin{align*}
\det(\mathrm{Gram}(\Delta))=\frac{1}{16}(e^{2b}+e^{2b'}+e^{2b''}-2e^{b+b'}-2e^{b+b''}-2e^{b'+b''})
\end{align*}
where $b=x_{01}+x_{23}, b'=x_{02}+x_{13}$ and $b''=x_{03}+x_{12}$. Use \eqref{eqn:zinlength}, it equals to
\begin{align*}
&\frac{1}{16}e^{2b''}(|z'|^{-4}+|z|^{4}+1-2|z|^{2}|z'|^{-2}-2|z|^{2}-|z'|^{-2})\\
=&\frac{1}{16}e^{2b''}(|1-z|^{4}+|z|^{4}+1-2|z|^{2}|1-z|^{2}-2|z|^{2}-2|1-z|^{2})\\
=&\frac{1}{16}e^{2b''}\Big((\mathrm{Re}(z)-1)^{2}+\mathrm{Im}(z)^{2})^{2}+((\mathrm{Re}(z))^{2}+\mathrm{Im}(z)^{2})^{2}+1-\\
&\hspace{1.5cm}2((\mathrm{Re}(z)-1)^{2}+\mathrm{Im}(z)^{2})((\mathrm{Re}(z)-1)^{2}+\mathrm{Im}(z)^{2})-\\
&\hspace{1.5cm}2((\mathrm{Re}(z)-1)^{2}+\mathrm{Im}(z)^{2})-2((\mathrm{Re}(z)-1)^{2}+\mathrm{Im}(z)^{2}\Big)\\
=&-\frac{1}{4}e^{2b''}\mathrm{Im}(z)^2
\end{align*}
\end{proof}
\section{Proof of Theorem \ref{thm:main1}}
In this section, we prove the following theorem. Theorem \ref{thm:main1} then follows from Theorem \ref{thm:poincaredual} and the fact the characters satisfying the conditions of \ref{thm:mainincomplete} form a dense subset of the $\boldsymbol \gamma$-regular locus of $X(M)$.

\begin{theorem}\label{thm:mainincomplete}
Let $M$ be a hyperbolic $3$-manifold with its boundary $\partial M$ is homeomorphic to a union of tori $T_{1}\ldots T_{|V|}$ and let $\boldsymbol{\gamma}=\{\gamma_{i}|i=1,\ldots |V|\}$ be a set of peripheral curves with $\gamma_{i}\subset T_{i}$. Then for any $\boldsymbol{z}\in \mathcal{V}(\mathcal{T})$ such that the induced $PSL(2, \mathbb{C})$-representation is $\boldsymbol{\gamma}$-regular and the induced character 
$$\chi_{\rho}:\pi_{1}(\partial M)\to \mathbb C^{*}$$
is nontrivial for each of boundary torus $\partial_{i}M=T_{i}$, we have
$$\mathrm{tor}(M, \partial M, \boldsymbol{h}_{\bullet}^{\gamma};\mathrm{Ad}\rho)
 =
 \frac{1}{\det(C^TC)}
 \det\begin{pmatrix}
 K&C\\
 L_{\boldsymbol\gamma}&0
 \end{pmatrix}
 \prod_{j=1}^N z_j^{f_j''}(z_j'')^{-f_j}.
$$
where
\begin{enumerate}
    \item $\boldsymbol{h}_{\bullet}^{\gamma}=\{\boldsymbol h_{1}^{\gamma}, \boldsymbol h_{2}^{\gamma}\}\subset H_{\bullet}(M, \partial M;\mathrm{Ad}\rho)$ is chosen such that $\boldsymbol h_{1}^{\gamma}$ and $\boldsymbol h_{2}^{\gamma}$ are bases dual to $\{[T_{i}]\otimes I\}$ and $\{[ \gamma_{i}]\otimes I\}$ respectively,
    \item $K$ and $L_{\boldsymbol \gamma}$ are the same as those in \ref{thm:main1}.
    \item $f_{\Delta}, f_{\Delta}'$ and $f_{\Delta}''$ form a \textbf{strong} combinatorial flattening.
\end{enumerate}
 
\end{theorem}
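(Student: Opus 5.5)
We compute the torsion of $\mathcal{A}_\bullet=C_\bullet(M,\partial M;\mathrm{Ad}\rho)$ directly from the cell structure of $\mathcal{T}$. We compare it, via Milnor's multiplicativity (Theorem \ref{thm:milnor}), with the mapping cone of the evaluation map $\mathrm{ev}:\mathcal{A}_\bullet\to\mathcal{B}_{\bullet-1}$ described in the introduction.

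\textbf{The complexes.} The ideal triangulation gives a cell decomposition of $(M,\partial M)$ with relative cells the edges, faces and tetrahedra. Hence
$$\mathcal{A}_3=\mathfrak{sl}_2^{T},\qquad \mathcal{A}_2=\mathfrak{sl}_2^{F},\qquad \mathcal{A}_1=\mathfrak{sl}_2^{E},$$
each with the standard basis. Fix a developing map, so that each lifted cell has ideal vertices in $\mathbb{CP}^1$. An element $X\in\mathfrak{sl}_2$ is an infinitesimal Möbius transformation, a quadratic vector field. Evaluating it at the developed ideal vertices of a cell gives $\mathrm{ev}$ into $\mathcal{B}_\bullet$. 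Here $\mathcal{B}_k$ is spanned by tangent vectors at the vertices of the $(k+1)$-cells, i.e. by corners: four corners per tetrahedron, three per face, two per edge.

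A quadratic vector field is determined by its values at three distinct points. So on faces and tetrahedra, $\mathrm{ev}$ is an isomorphism onto a $3$-dimensional subspace of the corner space; on a tetrahedron the fourth corner value is a linear combination of the other three, with coefficients rational in $z_\Delta$. On edges, $\mathrm{ev}$ is surjective onto $\mathbb{C}^2$ with a $1$-dimensional kernel, the infinitesimal loxodromic fixing both endpoints.

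\textbf{Geometric basis and the flattening monomial.} We choose a geometric basis of the cone $\mathcal{K}_\bullet$ adapted to these isomorphisms. Lemma \ref{lem:retraction} then cancels all face and tetrahedron pieces against corner pieces. What remains is the two-term complex
$$\mathbb{C}^{T}\to\mathbb{C}^{E}.$$
The $\mathbb{C}^T$ factor is the leftover fourth-corner coordinate of each tetrahedron, a cross-ratio deformation $dz_\Delta$. The $\mathbb{C}^E$ factor is the edge loxodromic kernel, measured by the logarithmic derivative of the holonomy around the edge.

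The residual differential is the linearized gluing map. It is determined by $d\log z, d\log z', d\log z''$ combined with $G,G',G''$, which rewrites as
$$A\,\Delta_{z''}+B\,\Delta_z^{-1}=K$$
after normalizing each tetrahedron's coordinate, using $d\log z''=dz/(z(z-1))$ and the relations among $z,z',z''$.

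The cellular-to-geometric change of basis is a product of local determinants, one per tetrahedron and face. Each is a Vandermonde-type determinant in the developed vertex positions. The vertex positions cancel between tetrahedra and faces by the gluing, leaving monomials in $z_\Delta,z'_\Delta,z''_\Delta$. These local monomials depend on the choice of quad and lifts. The flattening enters when we show that the product over all $\Delta$ equals $\prod z_\Delta^{f''_\Delta}(z''_\Delta)^{-f_\Delta}$ up to sign. The edge condition $\sum G f=2$ absorbs the edge contributions, and the strong (peripheral) condition absorbs the boundary contributions.

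\textbf{Homology and the bordered determinant.} The cone $\mathcal{K}_\bullet$ is not acyclic in general. Its torsion, and the torsion of $\mathcal{B}_\bullet$, must be paired with explicit homology bases. $\mathcal{B}_\bullet$ is essentially a complex of the cusp tori with coefficients in the tangent line at the cusp point. It is acyclic when $\chi_\rho$ is nontrivial on each $T_i$, which is exactly why that hypothesis appears, and its torsion is computed directly as a product that should cancel against local factors. So $\mathrm{tor}(\mathcal{A}_\bullet)$ equals $\mathrm{tor}(\mathcal{K}_\bullet)$ up to the basis-change determinant.

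For the two-term complex $\mathbb{C}^T\xrightarrow{K}\mathbb{C}^E$, the kernel has dimension $|V|$ and the cokernel is detected by $C^T$ (one relation per cusp). We use the bases $\boldsymbol h^\gamma$, dual to $\{[T_i]\otimes I\}$ and $\{[\gamma_i]\otimes I\}$. The $H_2$ classes correspond to the cokernel, represented by the columns of $C$. The $H_1$ classes pair with $\gamma_i$ through $L_{\boldsymbol\gamma}$, via Proposition \ref{prop:NZsymplectic}. A standard linear-algebra computation then gives the torsion as
$$\det\begin{pmatrix}K&C\\ L_{\boldsymbol\gamma}&0\end{pmatrix}\Big/\det(C^TC).$$
The factor $\det(C^TC)$ arises from converting the cokernel basis to the orthogonal complement chosen by the columns of $C$.

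\textbf{Main obstacle.} I expect the hardest step to be the exact identification of the peripheral normalization. One must check that the Poincaré-dual bases $\boldsymbol h^\gamma$, transported through the evaluation map and the Gaussian eliminations, become precisely the bordering rows $L_{\boldsymbol\gamma}$ and columns $C$ with no stray scalar factors. Equally delicate is verifying that the local monomials assemble into the flattening monomial independently of choices. I would first check both on a single cusp with one or two tetrahedra, for example the figure-eight knot complement, to fix the constants before giving the general argument.
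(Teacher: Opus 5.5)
Your proposal follows essentially the same route as the paper. It uses the mapping cone of the evaluation map into the cusp-tangent complex $\mathcal B_\bullet$, which is acyclic with torsion $\pm1$ thanks to the nontrivial boundary characters. A geometric basis then yields the flattening monomial, through the edge and strong peripheral flattening conditions, and Gaussian elimination reduces to $\mathbb C^T\to\mathbb C^E$ with differential $\tfrac12 K$, finished by a bordered-determinant lemma using the pairings with Porti's basis.

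Two bookkeeping points differ from what you wrote:
\begin{enumerate}
\item The pairing of $\ker D$ with $[\gamma_i]\otimes\mathbf I$ is not obtained from the Neumann--Zagier symplectic relations. It comes from a direct corner-by-corner computation, which shows it equals $\tfrac12\,dH(\gamma)$.
\item The powers of $2$ are not absent: the change of basis contributes $2^{|T|}$ and the bordered determinant contributes $2^{-|E|}$. These cancel because $|T|=|E|$.
\end{enumerate}
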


We prove the theorem by introducing an evaluation map from the adjoint twisted CW complex of $(M, \partial M)$ with its cell decomposition given by $\mathcal{T}$ to a twisted CW chain complex of $\partial M$. We then apply Milnor's multiplicativity theorem to its mapping cone. The reason we choose to work with the characters that restrict to non-trivial characters on the boundary tori is simple: For such characters, the induced twisted homology groups of boundary tori is trivial, which makes the long exact sequence of homology groups in the Milnor's multiplicativity theorem simpler.

\subsection{The evaluation map}
In this subsection, we describe two based chain complexes $(\mathcal{A}_{\bullet}, d_{\mathcal{A}})$ and $(\mathcal{B}_{\bullet}, d_{\mathcal{B}})$ used for computing $\mathrm{tor}(M, \partial M;\mathrm{Ad}\rho)$ and the evaluation map between them.

The ideal triangulation $\mathcal{T}$ defines a CW chain complex $C_{\bullet}(M, \partial M)$ for the pair $(M, \partial M)$, we denote the twisted chain complex $C_{\bullet}(\widetilde{M}, \widetilde{\partial M})\otimes_{\rho}\mathfrak{sl}(2, \mathbb C)$ by $\mathcal{A}_{\bullet}$, and choose the following basis 
$$\{\Delta\otimes \mathfrak e, \Delta\otimes \mathfrak f, \Delta\otimes \mathfrak h|t_{i}\in \bar T\}\subset \mathcal{A}_{3}$$
$$\{f\otimes \mathfrak e, f\otimes \mathfrak f, f\otimes \mathfrak h|f\in \bar F\}\subset \mathcal{A}_{2}$$
$$\{e\otimes \mathfrak e, e\otimes \mathfrak f, e\otimes \mathfrak h|e_{i}\in \bar E\}\subset \mathcal{A}_{1}$$
for $\mathcal{A}_{\bullet}$. Here $\bar T, \bar F$ and $\bar E$ are arbitrary lifts of $3$-cells in $T$, $2$-cells in $F$ and $1$-cells in $E$ respectively in the universal cover, and
$$\mathfrak e=\begin{bmatrix}
    0& 1\\
    0 & 0
\end{bmatrix}, \quad \mathfrak f=\begin{bmatrix}
    0& 0\\
    1 & 0
\end{bmatrix}, \quad \mathfrak h=\begin{bmatrix}
    1& 0\\
    0 & -1
\end{bmatrix}$$
be the standard basis of $\mathfrak{sl}_{2}(\mathbb{C})$. With this choice of basis, by the definition of Reidemeister torsion of pair $(M, \partial M)$, we have
$$\mathrm{tor}(\mathcal{A}_{\bullet},\boldsymbol{h}_{\bullet}^{\gamma})=\mathrm{tor}(M, \partial M, \boldsymbol{h}_{\bullet}^{\gamma};\mathrm{Ad}\rho).$$

Restricting $\mathcal{T}$ to boundary tori defines a triangulation of $\partial M$, which lifts to a triangulation of $\widetilde{\partial M}$. Choose $\boldsymbol{z}\in \mathcal{V(T)}$ that is compatible with $\rho$, and a developing map $D_{\boldsymbol{z}}$ sending $(\widetilde{M}, \widetilde{\partial M})$ to $\mathbb H^{3}$. Then each cell $c$ of $\widetilde{\partial M}$ is sent to a cell contained in a horospheres centered at $v(c)\in \partial \mathbb{H}^{3}\cong \mathbb CP^{1}$. Now take
$$\mathcal{B}_{i}:=\mathrm{span}_{\mathbb{C}}\{c\otimes T_{v(c)}\mathbb CP^{1}|c\in C_{i}(\widetilde{\partial M})\}/\pi_{1}(M).$$
Here the action of the fundamental group $\pi_{1}(M)$ on the tangent bundle $T\mathbb CP^{1}$ is induced by the representation $\rho: \pi_{1}(M)\to PSL(2, \mathbb{C})$.

Identify $\mathfrak{sl}_{2}(\mathbb{C})$ with the Lie algebra of $PSL(2, \mathbb{C})$. Each element $X\in \mathfrak{sl}_{2}(\mathbb{C})$ induces a vector field on $\partial \mathbb H^{3}\cong \mathbb CP^{1}$ via the infinitesimal action. We define the following evaluation map
\begin{align*}
    ev_{n}: \mathcal{A}_{n} &\to \mathcal{B}_{n-1}\\
    c\otimes X &\mapsto \sum_{i=0}^{n}\partial_{i}c\otimes X|_{p(\partial_{i}c)}
\end{align*}
where $\partial_{i}c\in C_{n-1}(\widetilde{\partial M})$ denotes the $i$-th corner (i.e., the $(n-1)$-cell at $i$-th vertex) of $c$, and $X|_{v(\partial_{i}c)}$ denotes the value of the vector field $X$ at the $v(\partial_{i}c)\in \partial \mathbb H^{3}$.

Now we describe a basis for $\mathcal{B}_{\bullet}$. For any $\mathbb C^{2}$ lifting of a vertex $v$ of $\widetilde{M}$, there is a natural identification between the tangent space of a vertex $v$ and $\mathrm{hom}(\langle v \rangle , \mathbb C^{2}/\langle v \rangle).$
After choosing a lifting $\tilde{v}$ of $v$, We define the vector $\epsilon_{\tilde v}$ in $T_{v}\mathbb CP^{1}$ to be the linear map satisfying
$$\det(\epsilon_{\tilde v}(\tilde v), \tilde v)=1.$$
Since $\dim \mathbb CP^{1}=1$, the vector $\epsilon_{\tilde v}$ is completely determined by the lifting. If vertex $v$ is instead lifted to the vector $\tilde{v}'=a\tilde{v}$ for some scalar $a$, then $\epsilon_{\tilde{v}'}=a^{-2}\epsilon_{\tilde v}$. Now we define the basis of $\mathcal{B}_{\bullet}$. We first choose a nice celluar fundamental domain in the universal cover. The following proposition garantees the existence of the desired fundamental domain.
\begin{proposition}\label{prop:lifttorus}
The induced triangulation on $\partial_{i}M$ admits a cellular fundamental domain $D_{i}\subset \widetilde{\partial_{i}M}$, satisfying one of the following conditions.
\begin{enumerate}
\item The boundary $\partial D_{i}$ consists of four consecutive paths $m_{i}, l_{i}, m_{i}', l_{i}'$, with all endpoints projecting to the same point $\star_{i}$ in $\partial_{i}M$, and $m_{i}'$ (resp. $l_{i}'$) is identified with $m_{i}^{-1}$(resp. $l_{i}^{-1}$) in $\partial_{i}M$. See Figure \ref{fig:42}.
\item The boundary $\partial D_{i}$ consists of six consecutive paths $m_{i},a_{i}, l_{i}, m_{i}', a_{i}', l_{i}'$, and the six end points projecting to two distict points $\star_{i}$ and $\star_{i}'$ in $\partial_{i}M$, with $a_{i}, m_{i}$ and $l_{i}$ are identified with $a_{i}', m_{i}'$, $l_{i}'$ respectively with their orientation reversed. See Figure \ref{fig:33}.
\end{enumerate}
We call the first case type $(4, 2)$ and the second case type $(3, 3)$.
\end{proposition}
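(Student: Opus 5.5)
We need to prove Proposition \ref{prop:lifttorus}: the induced triangulation on each boundary torus $\partial_i M$ admits a cellular fundamental domain $D_i$ in $\widetilde{\partial_i M}$ (really in the universal cover of the torus, i.e.\ the plane) whose boundary is either a 4-gon of edge paths with all corners over one vertex, or a hexagon with corners alternating over two vertices.

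\textbf{Spanning tree and gluing.} The induced triangulation of $\partial_i M$ is a (possibly non-simplicial) triangulation of the torus $T_i$, made of the vertex links of the ideal vertices. I would start from a spanning tree $\mathcal S$ of the dual graph of this triangulation. Cutting $T_i$ along all edges not dual to edges of $\mathcal S$ gives a disk. More concretely, I would glue the triangles of $T_i$ together one at a time along the edges dual to $\mathcal S$, lifting each triangle to the universal cover $\widetilde{T_i}\cong\mathbb R^2$. Since $\mathcal S$ is a tree, this produces a cellular disk $D^{(0)}\subset \widetilde{T_i}$ that is a fundamental domain. Its boundary is a polygon whose edges are paired by deck transformations. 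The boundary word therefore gives a one-face cell structure on the torus, still cellular with respect to the original triangulation.

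\textbf{Reducing to the standard polygons.} The quotient graph $\Gamma=\partial D^{(0)}/\!\sim$ is the 1-skeleton minus the edges dual to $\mathcal S$. It is a connected graph on the torus with one complementary disk, so by Euler characteristic $\chi(\Gamma)=-1$. I would then choose a maximal tree $\mathcal R\subset\Gamma$ and collapse it combinatorially, grouping the boundary edges of $D^{(0)}$ into edge paths. Two cases arise.
\begin{itemize}
\item If $\Gamma$ with $\mathcal R$ contracted has one vertex and two loops, the boundary of $D^{(0)}$ reads as a four-letter word in paths. Because the torus is orientable, this word can be normalized to $m\,l\,m^{-1}l^{-1}$. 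This gives type $(4,2)$.
\item Otherwise we get a theta graph: two vertices and three edges. The boundary word is then the hexagon $m\,a\,l\,m^{-1}a^{-1}l^{-1}$, with corners alternating between two points $\star_i,\star_i'$. This gives type $(3,3)$.
\end{itemize}
Since these are the only connected trivalent-type spines with $\chi=-1$ that carry a one-face embedding in the torus, the classification of one-face maps on the torus (genus one) gives exactly these two cases.

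\textbf{Main obstacle.} The hardest step is realizing the normalized word by an actual cellular domain rather than merely up to homotopy. Rewriting the boundary word usually requires cut-and-paste, i.e.\ cutting the disk along a diagonal and regluing a piece by a deck transformation. I would do this only along edge paths of the lifted triangulation, so each move replaces $D$ by another union of lifted triangles that is still a fundamental domain. The required diagonals exist because any two boundary vertices of $D$ can be joined inside $D$ by an edge path, $D$ being a triangulated disk. Each such move reduces the number of vertex classes appearing on the boundary that are not forced by the spine, or merges paths, so the process terminates. At that point the boundary is a union of four or six paths whose endpoints project to one or two points. The identifications $m_i'\sim m_i^{-1}$ and so on then follow from orientability.
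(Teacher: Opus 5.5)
There is a genuine gap in the reduction to the two normal forms; the tree--cotree start is fine. The edges not dual to a spanning tree of the dual graph do form a connected graph $\Gamma$ with $\chi(\Gamma)=-1$ whose complement is an open disk.

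The problem is the next step. Contracting a maximal tree $\mathcal R\subset\Gamma$ \emph{always} gives one vertex with two loops, because $\Gamma$ is connected of rank two. So your ``otherwise'' case never occurs, and the dichotomy cannot detect type $(3,3)$. Contraction also normalizes the boundary word only up to homotopy, not cellularly. If $\Gamma$ is a subdivided theta graph, the corners of $\partial D^{(0)}$ over its two trivalent vertices alternate, and every vertex occurs at most three times around $\partial D^{(0)}$. Hence no grouping into four edge paths with all endpoints over one point exists, even though the contracted word is a commutator. Your ``main obstacle'' paragraph does not repair this. The cut-and-paste moves are unspecified and the decreasing complexity is never defined, so neither termination nor the final shape of the domain is established.

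The missing idea is to prune rather than contract; after that, no cut-and-paste is needed.
\begin{itemize}
\item Deleting a degree-one vertex of $\Gamma$ together with its edge keeps the complement an open disk. In the cover, the two triangles along that edge already lie in $D^{(0)}$ and are adjacent there, so the edge simply becomes interior.
\item Iterating gives a core $\Gamma'$ with all degrees $\ge 2$ and $\chi(\Gamma')=-1$. Suppressing degree-two vertices, which is exactly grouping edges into paths, leaves a bouquet of two circles, a theta graph, or a dumbbell.
\item The dumbbell must be excluded, which your appeal to ``one-face maps'' glosses over. Two disjoint simple closed curves on a torus are either parallel essential curves or one of them bounds a disk. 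In either case the complement of the dumbbell is not a single disk.
\item For the bouquet, the single face forces the interleaved rotation at $\star_i$, giving $m\,l\,m^{-1}l^{-1}$. For the theta graph it forces the type $(3,3)$ hexagon.
\end{itemize}

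The paper bypasses this classification altogether. It takes a simple nonseparating cycle $\Gamma^0$ in the $1$-skeleton, which cuts $\partial_i M$ into an annulus, and an edge path $\Gamma^1$ crossing the annulus. Then $\Gamma^0\cup\Gamma^1$ is a bouquet or a theta graph according to whether the endpoints of $\Gamma^1$ coincide in $\partial_i M$, and its complement is a disk by construction.
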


\begin{proof}
The statement is equivalent to the existence of a graph $\Gamma_{i}$ belongs to the $1$-skeleton of $\partial_{i}M$, such that
\begin{enumerate}
\item $\partial_{i}M\setminus \Gamma_{i}$ is homeomorphic to a disk.
\item $\Gamma_{i}$ is either a theta graph or a join of $2$-circles.
\end{enumerate}
Now we describe an algorithm for finding such a graph $\Gamma_{i}$:
\begin{enumerate}
    \item Find a simple closed curve $\Gamma_{i}^{0}$ in the $1$-skeleton of $\partial_{i}M$, it cut $\partial_{i}M$ to an annulus.
    \item Find a path $\Gamma_{i}^{1}$ connecting the two components of the annulus obtained in the last step.
    \item Then $\Gamma_{i}=\Gamma_{i}^{0}\cup \Gamma_{i}^{1}$ is the desired graph.
\end{enumerate}
We complete the proof.
\end{proof}
Now we describe the basis of $\mathcal{B}_{\bullet}$.
\begin{enumerate}
    \item For each boundary torus $\partial_{i}M$, choose a cellular fundamental domain $D_{i}\subset \widetilde{\partial M}$ satisfying the conditions described in \ref{prop:lifttorus}. We define a set $S$ containing cells in the closure of the fundamental domain $\cup_{i\in V}D_{i}$.
    \begin{enumerate}
        \item[Type (4, 2):] Let $S$ contain all cells in the interior of $D_{i}$ for all $i$, together with all cells in the interior of $m_{i}\cup l_{i}$ for all $i$.
        \item[Type (3, 3):]  Let $S$ contain all cells in the interior of $D_{i}$ for all $i$, together with all cells in the interior of $m_{i}\cup l_{i}\cup a_{i}$ for all $i$.
    \end{enumerate}
\begin{figure}
    \centering
\begin{subfigure}[b]{0.48\textwidth}
     \centering
$\vcenter{\hbox{
\begin{tikzpicture}
\draw[->-=0.18, ->-=0.42, ->-=0.68, ->-=0.92] (0, 0)node[below]{$\star_{i}$} to node[below]{$l_{i}$} (3, 0) to node[right]{$m_{i}'$} (3, 1.5) to node[above]{$l_{i}'$} (0, 1.5) to node[left]{$m_{i}$} (0, 0);
\end{tikzpicture}}}$
\caption{Type (4,2): The fundamental domain consists of $4$ consecutive edges. $m_{i}'$ could be obtained from $m_{i}$ by applying deck transformation along $l_{i}$ and reversing the orientation. $l_{i}'$ could be obtained from $l_{i}$ by applying a deck transformation along $m_{i}^{-1}$ and reversing the orientation.\vspace{8.4mm}}
\label{fig:42}
\end{subfigure}
\hfill\quad
\begin{subfigure}[b]{0.48\textwidth}
        \centering
$\vcenter{\hbox{
\begin{tikzpicture}
\draw[->-=0.08, ->-=.23, ->-=.40, ->-=.58,->-=.73,->-=.90] (0, 0)node[left]{$\star_{i}$} to node[left]{$l_{i}$} (0.8, -0.8) to node[right]{$m_{i}'$} (1.6, 0) to node[right]{$a_{i}'$} (1.6, 1.5) to node[right]{$l_{i}$} (0.8, 2.3) to node[left]{$m_{i}$} (0, 1.5) node[left]{$\star_{i}'$} to node[left]{$a_{i}$} (0, 0);
\end{tikzpicture}}}$
\caption{Type (3,3): The fundamental domain consists of $6$ consecutive edges. $m_{i}'$ could be obtained from $m_{i}$ by applying deck transformation along $l_{i}\circ a_{i}$ and reversing the orientation. $l_{i}'$ could be obtained from $l_{i}$ by applying deck transformation along $a_{i}^{-1}\circ m_{i}^{-1}$ and reversing the orientation. $a_{i}'$ could be obtained from $a_{i}$ by applying deck transformation along $m_{i}^{-1}\circ l_{i}\circ a_{i}$ and reversing the orientation.}
\label{fig:33}
\end{subfigure}
\caption{}
\label{fig:1}
\end{figure}
    \item Under a developing map $\widetilde{M}\to \mathbb H^{3}$. the cellular fundamental domain $D_{i}$ belongs to a vertex of $\partial \mathbb{H}^{3}\cong \mathbb CP^{1}$. Lift it to $v_{i}\in \mathbb C^{2}$ for each $i$.
    \item Now for each corner $\partial_{k}c$ of a cell $c$ belonging to $\bar T, \bar F$ and $\bar E$, there is a unique element $g\in \pi_{1}(M)$, and a unique cell $d\in S$ such that 
    $$g(d)=\partial_{k}c,$$
    we lift the vertex corresponding to $\partial_{k}c$ to a vector $v(\partial_{k} c)$ satisfying
    \begin{equation}\label{eqn:transporting}
    v(\partial_{k} c)=\eta_{i} \rho(g)(v_{i}),
    \end{equation}
    where $\eta_{i}=\pm 1$; the sign ambiguity comes from $\rho(g)\in PSL(2, \mathbb{C})$.
    \item With the choices of vectors $v(\partial_{k}c)$ associated to each corner of cells of $\bar T, \bar F$ and $\bar E$, define bases 
    $$\{\partial_{i}\Delta\otimes \epsilon_{v(\partial_{i}\Delta)}|\Delta\in \bar T, i=0, 1, 2, 3\},$$
    $$\{\partial_{i}f\otimes \epsilon_{v(\partial_{i}f)}|f\in \bar F, i=0, 1, 2\},$$
    $$\{\partial_{i}e\otimes \epsilon_{v(\partial_{i}e)}|e\in \bar E, i=0, 1\},$$
    for $\mathcal{B}_{2}$, $\mathcal{B}_{1}$ and $\mathcal{B}_{0}$ respectively.
\end{enumerate}
With these choices of bases and lifts of vertices of the image of the developing map, we obtain the following corollary, which allows us to transfer corners of cells in $\bar T, \bar F$ and $\bar E$ to cells in $S$.
\begin{corollary}\label{cor:equivariantB}
Let $c$ be any cell of $\bar T\cup \bar F\cup \bar E$, and suppose $d$ is the unique cell in $S$ that lies in the the same $\pi_{1}(M)$-orbit with $\partial_{k}c$. Then we have
$$\partial_{k}c\otimes \epsilon_{v(\partial_{k}c)}=\eta_{j} d\otimes \epsilon_{v_{i}},$$
in $\partial B_{\bullet}$, where $\eta_{i}=\pm 1$.
\end{corollary}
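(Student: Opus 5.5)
The plan is to unwind the definitions: the basis element $\partial_k c\otimes\epsilon_{v(\partial_k c)}$ is the image of $d\otimes\epsilon_{v_i}$ under the deck action of $g$, up to sign. By step (3) of the construction there are a unique $g\in\pi_1(M)$ and a unique $d\in S$ with $g(d)=\partial_k c$. In $\mathcal{B}_\bullet$, which is a quotient by $\pi_1(M)$, we have
$$g(d)\otimes\rho(g)_*w\;=\;d\otimes w\qquad\text{for all } w\in T_{v(d)}\mathbb{CP}^1 .$$
Here $\rho(g)_*$ is the differential of the M\"obius action at $v(d)$, whose base point is the image of $v_i$. It therefore suffices to show that
$$\rho(g)_*\epsilon_{v_i}=\epsilon_{\pm\rho(g)v_i}.$$

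For this I use the identification $T_v\mathbb{CP}^1\cong\mathrm{hom}(\langle v\rangle,\mathbb C^2/\langle v\rangle)$. Let $\tilde g\in SL(2,\mathbb C)$ be a lift of $\rho(g)$. Under this identification, $\rho(g)_*$ sends a linear map $\phi$ to $\tilde g\circ\phi\circ\tilde g^{-1}$, viewed as a map $\langle\tilde g v\rangle\to\mathbb C^2/\langle\tilde g v\rangle$. I will check that this formula is consistent: it is the derivative of the curve $\tilde g(v+t\phi(v))$, and it does not depend on the choice of the lift $\tilde g$. Evaluating $\epsilon':=\tilde g_*\epsilon_{v_i}$ on $\tilde g v_i$ gives $\tilde g\,\epsilon_{v_i}(v_i)$, so
$$\det\bigl(\epsilon'(\tilde g v_i),\,\tilde g v_i\bigr)=\det(\tilde g)\,\det\bigl(\epsilon_{v_i}(v_i),\,v_i\bigr)=1 .$$
This uses $\det\tilde g=1$, and the determinant is well defined modulo $\langle v\rangle$. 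By the uniqueness of $\epsilon$ (since $\dim\mathbb{CP}^1=1$), it follows that $\epsilon'=\epsilon_{\tilde g v_i}$.

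Finally, by \eqref{eqn:transporting} we have $v(\partial_k c)=\eta\,\tilde g v_i$ with $\eta=\pm1$. The scaling rule $\epsilon_{a\tilde v}=a^{-2}\epsilon_{\tilde v}$ then gives $\epsilon_{v(\partial_k c)}=\epsilon_{\tilde g v_i}$, so the scaling itself produces no sign.

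Combining these steps, $\partial_k c\otimes\epsilon_{v(\partial_k c)}=d\otimes\epsilon_{v_i}$ holds exactly. This is consistent with the stated result up to $\pm1$, and the possible sign in the statement may reflect an orientation convention for how the corner cell is identified with $d$. If the corner cell is identified with $g(d)$ with reversed orientation, that contributes the sign $\eta_j$.

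The only delicate point is bookkeeping: confirming that the $\pi_1(M)$-quotient in $\mathcal{B}_\bullet$ uses exactly the pushforward action on tangent vectors, and tracking orientations of the corner cells. The actual computation is the determinant identity above.
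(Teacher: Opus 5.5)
Your proof is correct and follows the paper's argument. The paper's proof only cites the transporting relation $v(\partial_k c)=\eta_i\rho(g)(v_i)$; you spell out the two facts it uses implicitly, namely the coinvariant relation $g(d)\otimes\rho(g)_*w=d\otimes w$ and the identity $\rho(g)_*\epsilon_{v_i}=\epsilon_{\tilde g v_i}$ coming from $\det\tilde g=1$. You are also right that the sign $\eta$ is absorbed by $\epsilon_{a\tilde v}=a^{-2}\epsilon_{\tilde v}$, so any sign can only come from orientation conventions on the corner cells, which is consistent with the statement holding up to $\pm1$.
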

\begin{proof}
It is a direct consequence of Equation \eqref{eqn:transporting}.
\end{proof}
\begin{proposition}\label{prop:acyclicB}
The based chain complex $\mathcal{B}_{\bullet}$ is acyclic. In addition,
$$\mathrm{tor}(\mathcal{B}_{\bullet})=\pm 1.$$
\end{proposition}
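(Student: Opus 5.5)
The plan is to identify $\mathcal{B}_\bullet$ with a direct sum over the boundary tori of twisted cellular chain complexes of rank one, and then to compute the torsion of each summand.

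First, fix a torus $T_i$. Every cell $c$ of $\widetilde{\partial_i M}$ lies on a horosphere centred at a developed vertex $v(c)$, and the stabilizer of that vertex is a conjugate of $\pi_1(T_i)$. So $\mathcal{B}_\bullet$ splits as $\bigoplus_i C_\bullet(\partial_i M;\chi_i)$. Here $\chi_i\colon\pi_1(T_i)\to\mathbb{C}^*$ is the character through which $\rho(\pi_1(T_i))$ acts on the line $T_{v_i}\mathbb{C}P^1$. If $\rho(g)\tilde v=\lambda\tilde v$, then $\epsilon_{\tilde v}$ is rescaled by $\lambda^{2}$ (up to the convention $a^{-2}$), so $\chi_i$ is essentially the square of the eigenvalue character. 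This is exactly the character $\chi_\rho$ assumed nontrivial in Theorem \ref{thm:mainincomplete}.

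The basis element $\partial_k c\otimes\epsilon_{v(\partial_k c)}$ corresponds, by Corollary \ref{cor:equivariantB}, to $\pm d\otimes\epsilon_{v_i}$ with $d\in S$. Hence, up to signs, the chosen basis is the standard cellular basis of $C_\bullet(\partial_iM;\chi_i)$ given by the lifts $S$. There is one bookkeeping point to check: the corners of the cells $\bar T,\bar F,\bar E$ are in bijection with the cells of the induced boundary triangulation. Each boundary $k$-cell is the corner of exactly one ideal $(k+1)$-cell, so the basis is indeed a basis of the correct size. The same bijection shows that $d_{\mathcal B}$ is the twisted cellular boundary.

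Next, twisted homology of a torus with a nontrivial rank-one character vanishes. $H_0=\mathbb{C}/\langle \chi(g)-1\rangle=0$. The twisted Euler characteristic is $0$, so it remains to see $H_2=0$. By Poincaré duality $H_2\cong H^0(T;\chi^{-1})$, the invariants of $\chi^{-1}$, which vanish. Hence $H_1=0$ as well, and $\mathcal{B}_\bullet$ is acyclic.

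For the torsion, torsion is invariant under subdivision and cellular moves once the bases are standard up to sign and the complex is acyclic. So it suffices to compute it on the minimal CW structures coming from the fundamental domains of Proposition \ref{prop:lifttorus}. This reduction is the step I expect to be the main obstacle: one must check that the specific lifts in $S$ are standard, which holds because each basis vector is $\pm$ a translate of $d\otimes\epsilon_{v_i}$.

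For type $(4,2)$ the minimal complex has one vertex $\star$, edges $m,l$, and one 2-cell. Write $\mu=\chi(m)$ and $\lambda=\chi(l)$. The boundary maps are
\[
\partial_2=\begin{pmatrix}1-\lambda & \mu-1\end{pmatrix}^{T},\qquad \partial_1=\begin{pmatrix}\mu-1 & \lambda-1\end{pmatrix},
\]
up to signs. Since $\chi$ is nontrivial, say $\mu\neq1$. Choose the lift of $\partial_1$'s image to be the edge $m$, so that it contributes $\mu-1$. For $\partial_2$, the image together with $m$ gives the determinant
\[
\det\begin{pmatrix}1-\lambda&1\\ \mu-1&0\end{pmatrix}=\pm(\mu-1).
\]
The factors $(\mu-1)^{\pm1}$ then cancel in the alternating product, giving torsion $\pm1$. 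This is the familiar fact that the torus has torsion $1$ for nontrivial characters.

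The type $(3,3)$ structure collapses to the $(4,2)$ one, since contracting the edge $a$ is an elementary collapse with a unit (sign) change of basis. Alternatively, the same calculation applies directly. In either case, multiplicativity over the summands gives $\mathrm{tor}(\mathcal{B}_\bullet)=\pm1$.
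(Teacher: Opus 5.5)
Your proposal is correct and follows the paper's argument. You split $\mathcal{B}_\bullet$ via Corollary~\ref{cor:equivariantB} into rank-one twisted complexes of the boundary tori with cellular bases from $S$, retract to the minimal cell structure of the fundamental domain $D_i$, and compute the reduced torsion explicitly using nontriviality of the character. In the retraction step, what matters is that all lifts in $S$ lie in $\overline{D_i}$ (true by construction), not just that the basis is standard, since moving a single lift by $g$ would rescale the torsion by $\chi_i(g)^{\pm1}$.

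Your deviations are minor. You get acyclicity from $H_0=H_2=0$ and vanishing Euler characteristic rather than by inspection. You also reduce type $(3,3)$ to type $(4,2)$ by cancelling the pair $(a,\star')$ via Lemma~\ref{lem:retraction}, instead of evaluating the paper's $3\times 3$ determinant; this is valid because $\partial a=\star'-\star$ has unit coefficient, though it is a Gaussian elimination rather than a literal elementary collapse.
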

\begin{proof}
By Corollary \ref{cor:equivariantB}, we have the following up to sign isomorphism of based chain complexes
$$\mathcal{B}_{\bullet}=\bigoplus_{i\in V}K_{\bullet}(\partial_{i}M),$$
where $K_{\bullet}(\partial_{i}M)$ is the twisted CW chain complex of $\partial_{i}M$, with basis chosen to be cells in $S$. Here by up to sign, we mean the basis of $\mathcal{B}_{\bullet}$ is sent to basis of $\bigoplus_{i\in V}K_{\bullet}(\partial_{i}M)$ up to sign. Hence it suffices to show each summand $K_{\bullet}(\partial_{i}M)$ is acyclic, and has torsion $\pm1$. We prove this for both type (4, 2) and type (3, 3).

Suppose $\partial_{i}M$ admits a celullar fundamental domain $D$ of type $(4, 2)$. Then we have 
$$\rho(\pi_{1}([m]))\epsilon_{v}=x_{m}^{2}\epsilon_{v} \quad \text{and}\quad\rho(\pi_{1}([l]))\epsilon_{v}=x_{l}^{2}\epsilon_{v},$$ where
$$x_{m}=\chi_{\rho}([m])\quad \text{and,} \quad x_{l}=\chi_{\rho}({l}).$$ 
The nontrivial assumption of $\chi_{\rho}$ implies $x_{m}^2\ne 1$ or $x_{l}^2\ne 1$. Without loss of generality, we may assume $x_{l}^2\ne 1$. Retracting the CW chain complex, we may assume the chain complex $K_{\bullet}(\partial_{i}M)$ contains a single $2$-cell $t$, two $1$-cells $m$ and $l$, and a single $0$-cell $\star$. Then the chain complex is written as
$$\langle t\otimes \epsilon_{v}\rangle\xrightarrow{d_{2}}\langle m\otimes \epsilon_{v}, l\otimes \epsilon_{v}\rangle\xrightarrow{d_{1}} \langle \star \otimes \epsilon_{v}\rangle$$
where 
$$d_{2}(t\otimes \epsilon_{v})=m\otimes \epsilon_{v}+l\otimes \epsilon_{v}-m\otimes x_{l}^2\epsilon_{v}-l\otimes x_{m}^{-2}\epsilon_{v}$$ 
and 
$$d_{1}(m)=\star\otimes \epsilon_{v}-\star\otimes x_{m}^{-2}\epsilon_{v},\quad d_{1}(l)=\star\otimes x_{l}^2\epsilon_{v}-\star\otimes \epsilon_{v}.$$
The chain complex is clearly acyclic. It remains to compute its torsion, we lift boundary element $\star$ to $(x_{l}^2-1)^{-1}l\otimes \epsilon_{v}$, and choose boundary element $d_{2}(t\otimes \epsilon_{v})$. By the definition of torsion, we have
\begin{align*}
\mathrm{tor}(K_{\bullet}(\partial_{i}M))=\det \begin{bmatrix}
   1-x_{l}^2 & 0\\
   1-x_{m}^{-2} &(x_{l}^2-1)^{-1}
\end{bmatrix}=-1.
\end{align*}
This proves the claim.

Suppose $\partial_{i}M$ admits a cellular fundamental domain $D$ of type $(3, 3)$. Then we have
$$\rho(\pi_{1}([a\circ m]))\epsilon_{v}=x_{ma}^2\epsilon_{v},\quad  \rho(\pi_{1}([l\circ a]))\epsilon_{v}=x_{al}^2\epsilon_{v},\hspace{3mm}\text{and}\quad \rho(\pi_{1}([m^{-1}\circ l]))\epsilon_{v}=x_{lm^{-1}}^2\epsilon_{v}$$ where
$$x_{ma}=\chi_{\rho}([a\circ m]),\quad \text{}\quad x_{al}=\chi_{\rho}([l\circ a]) \quad \text{and,}\quad x_{lm^{-1}}=\chi_{\rho}([m^{-1}\circ l]).$$
The nontrivial assumption of $\chi_{\rho}$ implies one of $x_{ma}^2, x_{al}^2$ and $x_{lm^{-1}}^2$ not equal to $1$. Without loss of generality, we may assume $x_{al}^2\ne 1$. Retracting the CW chain complex, we may assume the chain complex $K_{\bullet}(\partial_{i}M)$ contains a single $2$-cell $t$, three $1$-cells $m$, $a$ and $l$, and two $0$-cells $\star$ and $\star^{*}$. The chain complex is written as
$$\langle t\otimes \epsilon_{v}\rangle\xrightarrow{d_{2}}\langle a\otimes \epsilon_{v}, m\otimes \epsilon_{v}, l\otimes \epsilon_{v}\rangle\xrightarrow{d_{1}} \langle \star \otimes \epsilon_{v}, \star' \otimes \epsilon_{v}\rangle$$
where 
$$d_{2}(t\otimes \epsilon_{v})=m\otimes \epsilon_{v}+a\otimes \epsilon_{v}+l\otimes \epsilon_{v}-m\otimes x_{al}^2\epsilon_{v}-a\otimes x_{lm^{-1}}^2\epsilon_{v}-l\otimes x_{ma}^{-2}\epsilon_{v}$$ 
and 
$$d_{1}(m)=\star\otimes \epsilon_{v}-\star'\otimes x_{ma}^{-2}\epsilon_{v},\quad d_{1}(l)=\star\otimes x_{al}^2\epsilon_{v}-\star'\otimes \epsilon_{v},\quad d_{1}(a)=\star'\otimes \epsilon_{v}-\star\otimes\epsilon_{v}$$
The chain complex is clearly acyclic. It remains to compute its torsion. We lift the boundary elements $\star$ and $\star'$ to
$$(x_{al}^2-1)^{-1}(a + l)\quad \text{and }\quad a+(x_{al}^2-1)^{-1}(a + l)$$
respectively and choose the boundary element $d_{2}(T\otimes \epsilon_{v})$. The torsion is computed explicity as follows:
\begin{align*}
\mathrm{tor}(K_{\bullet}(T_{i}))=\det \begin{bmatrix}
   1-x_{lm^{-1}}^2 & \frac{1}{x_{al}^2-1} & \frac{x_{al}^2}{x_{al}^2-1}\\
   1-x_{al}^2 &0 & 0\\
   1-x_{ma}^{-2} &\frac{1}{x_{al}^2-1} &\frac{1}{x_{al}^2-1}\\
\end{bmatrix}=-1.
\end{align*}
This proves the claim.
\end{proof}

Now we define $\mathcal{K}_{\bullet}$ to be the mapping cone of $(\mathcal{A}_{\bullet}\xrightarrow{ev}\mathcal{B}_{\bullet-1})$. Precisely, $\mathcal{K}_{i}=\mathcal{A}_{i}\oplus\mathcal{B}_{i}$ and its differential map $d_{\mathcal{K}}$ is defined as follows:
\begin{equation*}
d_{\mathcal{K}}=\begin{bmatrix}
    -d_{\mathcal{A}} & 0 \\
    \mathrm{ev} & d_{\mathcal{B}}
\end{bmatrix}
\end{equation*}

The chosen bases of $\mathcal{A}_{\bullet}$ and $\mathcal{B}_{\bullet}$  naturally makes $\mathcal{K}_{\bullet}$ a based chain complex. We call the basis obtained this way the topological basis of $\mathcal{K}_{\bullet}$. Applying Milnor's multiplicativity Theorem \ref{thm:milnor} to the short exact sequence of the based chain complexes
$$0\to \mathcal{B}_{\bullet}\to \mathcal{K}_{\bullet}\to \mathcal{A}_{\bullet}\to 0,$$
one obtains
$$\mathrm{tor}(\mathcal{A}_{\bullet})=\pm\mathrm{tor}(\mathcal{K}_{\bullet})\mathrm{tor}(\mathcal{B}_{\bullet})^{-1}\mathrm{tor}(\mathcal{H}_{\bullet})^{-1}.$$
By Proposition \ref{prop:acyclicB}, $\mathrm{tor}(\mathcal{B_{\bullet}})=1$. It remains to compute $\mathrm{tor}(\mathcal{K}_{\bullet})$ and $\mathrm{tor}(\mathcal{H}_{\bullet})$. Again by Proposition \ref{prop:acyclicB} again, the long exact sequence of homology homology groups contains two non-zero maps:
\begin{equation}\label{eq:homologyKA}
H_{1}(\mathcal{K}_{\bullet})\cong H_{1}(\mathcal{A}_{\bullet}), \quad H_{2}(\mathcal{K}_{\bullet})\cong H_{2}(\mathcal{A}_{\bullet}).\end{equation}
Identifying the homology groups of $\mathcal{K}_{\bullet}$ and $\mathcal{A}_{\bullet}$ using the isomorphism above, we have
\begin{equation}\label{eq:applyM}
\mathrm{tor}(M, \partial M; \boldsymbol h_{1}, \boldsymbol h_{2})=\pm\mathrm{tor}(\mathcal{K}_{\bullet};\boldsymbol h_{1}, \boldsymbol h_{2}).
\end{equation}

\subsection{Torsion of Mapping Cone $\mathcal{K}_{\bullet}$}
In this subsection, we compute 
$$\mathrm{tor}(\mathcal{K}_{\bullet};\boldsymbol h_{\bullet}^{\boldsymbol \gamma}),$$
where $\boldsymbol h_{\bullet}^{\boldsymbol \gamma}$ denotes bases of homology groups of $\mathcal{K}_{\bullet}$ that are dual to Porti's basis $\{\boldsymbol{h}_{(M, \boldsymbol \gamma)}^{1}, \boldsymbol{h}_{M}^{2}\}$ under the identification in \eqref{eq:homologyKA}.
\begin{enumerate}
    \item [Step 1.] We first introduce a geometric basis $\boldsymbol{G}=\{G_{3},\ldots, G_{0}\}$ for $\mathcal{K}_{\bullet}$ and show that changing from the topological basis to the geometric basis changes the torsion of $\mathcal{K}_{\bullet}$ by a factor equal 
    $$2^{|T|}\prod_{\Delta\in T}^{N}z_{\Delta}^{f_{\Delta}''}(z_{\Delta}'')^{-f_{\Delta}},$$
    where $f_{\Delta}, f_{\Delta}'$ and $f_{\Delta}''$ form a strong combinatorial flattening, as defined in \ref{sec:2.3}.
    \item [Step 2.] With $\mathcal{K}_{\bullet}$ equipped with the geometric basis $\boldsymbol{G}$, the matrix representations of its boundary maps contain identity blocks, so we apply Lemma \ref{lem:retraction} and retracts $\mathcal{K}_{\bullet}$ to
    $$\mathbb{C}^{T}\xrightarrow{D} \mathbb C^{E}\to 0.$$
    By exlicit computation, we show in Proposition \ref{prop:matrixK} that the matrix representation of $D$ with respect to the geometric basis equal to $\frac{1}{2}(A\Delta_{z''}+B\Delta_{z}^{-1})$.
    \item [Step 3.] By explicit computation, we describe the bilinear forms
    $$\mathrm{coker}(D)\times H_{2}(M;\mathrm{Ad}\rho)\to \mathbb{C},$$
    $$\mathrm{ker}(D)\times H_{1}(M; \mathrm{Ad}\rho)\to \mathbb{C},$$
    in Proposition \ref{prop:pairing1} and \ref{prop:pairing2}. Then we use a linear algebra lemma \ref{lem:bordered} to write
    $$\mathrm{tor}(\mathbb{C}^{T}\xrightarrow{D} \mathbb C^{E}\to 0; \boldsymbol{h}_{\bullet}^{\gamma})=\frac{2^{-|E|}}{\det(C^TC)}
 \det\begin{pmatrix}
 K&C\\
 L_{\boldsymbol\gamma}&0
 \end{pmatrix}.$$
\end{enumerate}
\subsubsection{Geometric basis}
We introduce the following convention on the lifted vertices of cells of $\mathcal{T}$. Let $v_{i}: \bar T\to \mathbb{C}^{2}$ be the map sending a tetrahedron $\Delta$ to its lifted $i$-th vertex in $\mathbb{C}^{2}$. Here $i$ takes value in $\{0, 1, 2, 3\}$. We also define such maps over the set of faces $\bar F$ and the set of edges $\bar E$ of $\mathcal{T}$, and still denote them by $v_{i}$, where $i$ takes value in $\{0, 1, 2\}$ and $\{0, 1\}$, respectively. We will also use the shorthand notation
$$[ij]_{c}:=\det(v_{i}(c), v_{j}(c)),$$
where $c$ takes value in the set of cells of $\bar E, \bar F$ and $\bar T$.
\subsubsection{Geometric basis for $\mathcal{K}_{0}$ and $\mathcal{K}_{1}$}
For each edge $e$ of $E$, we have the following summand of $\mathcal{A}_{1}\to \mathcal{B}_{0}$.
$$ev:\langle e\otimes \mathfrak e, e\otimes \mathfrak f, e\otimes \mathfrak h \rangle\to \langle \partial_{0}e\otimes \epsilon_{v_{0}(e)}, \partial_{1}e\otimes \epsilon_{v_{1}(e)} \rangle.$$
\begin{proposition}\label{prop:edgedet}
For each edge $e\in \bar E$, define
$$M_{X}(e):=[v_{0}(e), v_{1}(e)]X[v_{0}(e), v_{1}(e)]^{-1}$$
for $X\in \mathfrak{sl}_{2}(\mathbb C)$. The evaluation map has the following properties, 
\begin{enumerate}[(1)]
\item Its kernel is generated by the vector
$$e\otimes M_{\mathfrak h}(e).$$
\item Vectors
$$[01]_{e}^{-1}e\otimes M_{\mathfrak e}(e),\text{ and } -[01]_{e}^{-1}e\otimes M_{\mathfrak f}(e)$$
are lifts of $\partial_{1}e\otimes \epsilon_{v_{1}(e)}$ and $\partial_{0}e\otimes \epsilon_{v_{0}(e)}$ respectively.
\item The determinant
$$[[01]_{e}^{-1}e\otimes M_{\mathfrak e}(e), [01]_{e}^{-1}e\otimes M_{\mathfrak f}(e), e\otimes M_{\mathfrak h}(e); e\otimes \mathfrak e, e\otimes \mathfrak f, e\otimes \mathfrak h]=\pm[01]^{-2}_{e}$$
\end{enumerate}
\end{proposition}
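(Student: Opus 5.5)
The plan is to reduce everything to the normal form $v_0(e)=(1,0)^T$, $v_1(e)=(0,1)^T$ by conjugating with $P:=[v_0(e),v_1(e)]$, and to do an explicit $2\times 2$ computation there. First I would compute the infinitesimal vector field of $X\in\mathfrak{sl}_2(\mathbb C)$ at a point $[v]\in\mathbb CP^1$, using the identification $T_{[v]}\mathbb CP^1\cong \mathrm{hom}(\langle v\rangle,\mathbb C^2/\langle v\rangle)$: the tangent vector is $v\mapsto Xv \bmod \langle v\rangle$. Since $\epsilon_{\tilde v}$ is characterized by $\det(\epsilon_{\tilde v}(\tilde v),\tilde v)=1$, the coefficient of $X|_{[v]}$ in the basis $\epsilon_{\tilde v}$ is the scalar $\det(X\tilde v,\tilde v)$. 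Thus
$$\mathrm{ev}(e\otimes X)=\det(Xv_0,v_0)\,\partial_0e\otimes\epsilon_{v_0(e)}+\det(Xv_1,v_1)\,\partial_1e\otimes\epsilon_{v_1(e)},$$
up to the sign convention in the boundary map of $e$, which I will track but which only affects signs in (2).

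Next I plug in $X=M_Y(e)=PYP^{-1}$. Since $Pe_1=v_0$ and $Pe_2=v_1$, I get $Xv_0=PYe_1$ and $Xv_1=PYe_2$, so $\det(Xv_k,v_k)=\det P\cdot\det(Ye_{k+1},e_{k+1})$ with $\det P=[01]_e$. Then I would check the three basis elements directly. For $Y=\mathfrak h$, $\mathfrak h e_i=\pm e_i$, so both coefficients vanish; this shows $e\otimes M_{\mathfrak h}(e)\in\ker$. For $Y=\mathfrak e$, we have $\mathfrak e e_1=0$ and $\mathfrak e e_2=e_1$, with $\det(e_1,e_2)=1$, so $\mathrm{ev}(e\otimes M_{\mathfrak e})=[01]_e\,\partial_1e\otimes\epsilon_{v_1}$. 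For $Y=\mathfrak f$, $\mathfrak f e_1=e_2$ and $\det(e_2,e_1)=-1$, so $\mathrm{ev}(e\otimes M_{\mathfrak f})=-[01]_e\,\partial_0e\otimes\epsilon_{v_0}$. Dividing by $[01]_e$ gives (2). Since $\mathrm{ev}$ is then surjective onto a 2-dimensional space from a 3-dimensional one, its kernel is exactly 1-dimensional, which gives (1).

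For (3), I note that the change of basis is the matrix of the linear map $\mathrm{Ad}_P$ on $\mathfrak{sl}_2$ in the basis $(\mathfrak e,\mathfrak f,\mathfrak h)$, composed with $\mathrm{diag}([01]_e^{-1},[01]_e^{-1},1)$. Because $P\in GL_2$ rather than $SL_2$, I would write $P=\sqrt{\det P}\,P_1$ with $P_1\in SL_2$. Conjugation is unchanged by scalars, and $\mathrm{Ad}$ of $SL_2$ has determinant $1$, since it preserves the Killing form and $SL_2$ is connected. Hence $\det\mathrm{Ad}_P=1$, and the determinant of the change of basis is $[01]_e^{-2}$, up to the sign ambiguity coming from the sign in front of $M_{\mathfrak f}$ and from basis ordering.

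The only delicate step is fixing the orientation and sign conventions, namely which corner is $\partial_0$ and how the tangent-vector identification enters. All of these affect only signs, and the statement is asserted up to $\pm$, so I expect no real obstacle.
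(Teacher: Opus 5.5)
Your proposal is correct and follows essentially the same route as the paper. Like the paper, you compute $\mathrm{ev}$ on $M_{\mathfrak e},M_{\mathfrak f},M_{\mathfrak h}$ via the coefficients $\det(Xv_k,v_k)$ in the frame $[v_0,v_1]$, deduce (1) from the surjectivity established in (2), and obtain (3) from $\det\mathrm{Ad}_P=\pm1$. Your rescaling-to-$SL_2$ and connectedness argument simply justifies that last fact, which the paper asserts without proof.
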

\begin{proof}
(2) implies the kernel is one-dimensional. Hence we first check (2), then (1). By the definition of the evaluation map,
\begin{align*}
ev(e\otimes M_{\mathfrak e}(e))&=\partial_{0}e\otimes \det(M_{\mathfrak e}(e)v_{0}, v_{0})\epsilon_{v_{0}(e)}+\partial_{1}e\otimes\det(M_{\mathfrak e}(e)v_{1}, v_{1})\epsilon_{v_{1}(e)}\\
&=\det(0, v_{0})\partial_{0}e\otimes\epsilon_{v_{0}(e)}+\det(v_{0}, v_{1})\partial_{1}e\otimes\epsilon_{v_{1}(e)}\\
&=[01]_{e}\partial_{1}e\otimes\epsilon_{v_{1}(e)}.
\end{align*}
Here the second equality follows from 
$$(v_{0}, v_{1})^{-1}v_{0}=\begin{bmatrix}
1 \\
0
\end{bmatrix}, \quad (v_{0}, v_{1})^{-1}v_{1}=\begin{bmatrix}
0 \\
1
\end{bmatrix},$$
for any pair of vectors $v_{0}, v_{1}$ spanning $\mathbb C^{2}$.
By a similar computation, we obtain
$$ev(e\otimes M_{\mathfrak f}(e))=-[01]_{e}\partial_{0}e\otimes \epsilon_{v_{0}(e)}.$$
This proves (2). Now compute
\begin{align*}
ev(e\otimes M_{\mathfrak h}(e))=\det(v_{0}, v_{0})\partial_{0}e\otimes\epsilon_{v_{0}(e)}+\det(-v_{1}, v_{1})\partial_{1}e\otimes\epsilon_{v_{1}(e)}.
\end{align*}
This proves (1). Statement (3) follows from the fact
$$[g\mathfrak eg^{-1}, g\mathfrak fg^{-1}, g\mathfrak hg^{-1}; \mathfrak e, \mathfrak f, \mathfrak h]=\pm 1$$
for all $g\in GL(2, \mathbb C)$.
\end{proof}
For each face $f\in \bar{F}$, we have the following summand
$$ev:\langle f\otimes \mathfrak e, f\otimes \mathfrak f, f\otimes \mathfrak h \rangle\to \langle \partial_{i}f\otimes \epsilon_{v_{i}(f)}|i\in \{0, 1, 2\} \rangle$$
of $\mathcal{A}_{2}\to \mathcal{B}_{1}$.
\begin{proposition}\label{prop:facedet}
For each face $f\in F$, the map $ev(f)$ is an isomorphism with determinant
$$[ev(f)\otimes \mathfrak e, ev(f)\otimes \mathfrak f, ev(f)\otimes \mathfrak h; \partial_{0}f\otimes \epsilon_{v_{0}(f)}, \partial_{1}f\otimes \epsilon_{v_{1}(f)}, \partial_{2}f\otimes \epsilon_{v_{2}(f)}]=\pm 2[01]_{f}[02]_{f}[12]_{f}.$$
\end{proposition}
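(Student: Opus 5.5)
The plan is to compute the $3\times 3$ matrix of $ev(f)$ in the given bases explicitly, and to recognize its determinant as a homogeneous Vandermonde determinant.

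First I determine the component of $ev(f\otimes X)$ on each corner. By the definition of the evaluation map and the normalization $\det(\epsilon_{\tilde v}(\tilde v),\tilde v)=1$, the coefficient of $\partial_i f\otimes \epsilon_{v_i(f)}$ in $ev(f\otimes X)$ is $\det(Xv_i, v_i)$, where $v_i=v_i(f)$. This is the same computation as in the proof of Proposition \ref{prop:edgedet}: the infinitesimal action of $X$ at $[v]$ is the class of $Xv$ in $\mathbb C^2/\langle v\rangle$, and pairing it against $\epsilon_v$ amounts to taking $\det(\cdot\,, v)$.

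Next I write $v_i=(a_i,b_i)^T$ and evaluate on the standard basis:
\begin{align*}
\det(\mathfrak e v_i,v_i)&=\det\begin{pmatrix} b_i & a_i\\ 0 & b_i\end{pmatrix}=b_i^2,\\
\det(\mathfrak f v_i,v_i)&=-a_i^2,\\
\det(\mathfrak h v_i,v_i)&=2a_ib_i.
\end{align*}
Hence the matrix of $ev(f)$ has rows $(b_i^2,\,-a_i^2,\,2a_ib_i)$ for $i=0,1,2$. Pulling out the factor $2$ from the $\mathfrak h$-column and the sign from the $\mathfrak f$-column leaves $\pm2\det[b_i^2,\ a_i^2,\ a_ib_i]_{i=0,1,2}$.

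The remaining step is to identify this determinant. It is the homogeneous version of the Vandermonde determinant for binary quadratics, equal to $\pm\prod_{i<j}(a_ib_j-a_jb_i)=\pm[01]_f[02]_f[12]_f$. I would justify this in one of two ways:
\begin{itemize}
\item Dehomogenize (assuming $b_i\neq0$, otherwise argue by continuity or by symmetry in $a$ and $b$): divide row $i$ by $b_i^2$ to get the Vandermonde matrix in $t_i=a_i/b_i$, whose determinant is $\prod_{i<j}(t_j-t_i)$. Multiplying back by $\prod_i b_i^2$ gives $\prod_{i<j}(a_jb_i-a_ib_j)$.
\item Alternatively, observe that the determinant is a polynomial of degree $6$ that vanishes whenever two of the $v_i$ are proportional, and compare one coefficient.
\end{itemize}

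Since the three ideal vertices of a face are distinct points of $\mathbb CP^1$, all $[ij]_f\neq0$. So $ev(f)$ is an isomorphism, and its determinant is $\pm2[01]_f[02]_f[12]_f$, as claimed. There is no real obstacle here; the only care needed is with the sign conventions and the factor $2$ coming from $\mathfrak h$.
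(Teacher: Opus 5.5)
Your proposal is correct and follows the paper's proof. The paper computes the same $3\times 3$ matrix with rows $(y_i^2,\,-x_i^2,\,2x_iy_i)$ and factors its determinant as $-2[01]_f[02]_f[12]_f$. Your homogeneous Vandermonde argument (dehomogenizing, with continuity covering $b_i=0$) supplies the justification for that factorization, which the paper states without detail.
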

\begin{proof}
Assume
$$v_{0}=\begin{bmatrix}
x_{0}\\
y_{0}
\end{bmatrix}, \quad v_{1}=\begin{bmatrix}
x_{1}\\
y_{1}
\end{bmatrix}, \quad v_{2}=\begin{bmatrix}
x_{2}\\
y_{2}
\end{bmatrix}.$$
The determinant equals
\begin{align*}
&\det\begin{bmatrix}
\det({\mathfrak e v_{0}, v_{0}}) & \det({\mathfrak f v_{0}, v_{0}}) & \det({\mathfrak h v_{0}, v_{0}})\\
\det({\mathfrak e v_{1}, v_{1}}) & \det({\mathfrak f v_{1}, v_{1}}) & \det({\mathfrak h v_{1}, v_{1}})\\
\det({\mathfrak e v_{2}, v_{2}}) & \det({\mathfrak f v_{2}, v_{2}}) & \det({\mathfrak h v_{2}, v_{2}})\\
\end{bmatrix}\\
=&\det\begin{bmatrix}
y_{0}^{2} & -x_{0}^{2} & 2x_{0}y_{0}\\
y_{1}^2 & -x_{1}^2& 2x_{1}y_{1}\\
y_{2}^2 & -x_{2}^2 & 2x_{2}y_{2}
\end{bmatrix}\\
=&-2(x_{0}y_{1}-x_{1}y_{0})(x_{0}y_{2}-x_{2}y_{0})(x_{1}y_{2}-x_{2}y_{1})=-2[01]_{f}[02]_{f}[12]_{f}.
\end{align*}
This proves the identity.
\end{proof}
We define the geometric basis for $\mathcal{K}_{0}=\mathcal{B}_{0}$ to be
$$G_{0}=\{\partial_{0}e, \partial_{1}e|e\in E\},$$
and the geometric basis for $\mathcal{K}_{1}=\mathcal{A}_{1}\oplus \mathcal{B}_{1}$ to be
$$G_{1}:=\{[01]_{e}^{-1}e\otimes M_{\mathfrak e}(e), -[01]_{e}^{-1}e\otimes M_{\mathfrak f}(e), e\otimes M_{\mathfrak h}(e)|e\in E\}\cup \{ev(f)\otimes \mathfrak e, ev(f)\otimes \mathfrak f, ev(f)\otimes \mathfrak h|f\in F\}.$$
Propositions \ref{prop:edgedet} and \ref{prop:facedet} imply $G_{1}$ is a basis of $\mathcal{K}_{1}$.

\subsubsection{Geometric basis for $\mathcal{K}_{2}$ and $\mathcal{K}_{3}$}
For each tetrahedron $\Delta\in \bar{T}$, we have the following summand 
$$ev:\langle \Delta\otimes \mathfrak e, \Delta\otimes \mathfrak f, \Delta\otimes \mathfrak h\rangle\to \langle \partial_{i}\Delta\otimes \epsilon_{v_{i}(\Delta)} |i\in \{0, 1, 2, 3\}\rangle$$
of $ev_{3}: \mathcal{A}_{3}\to \mathcal{B}_{2}$. We have the following proposition.
\begin{proposition}\label{prop:crossratio}
The cross-ratio function
\begin{align*}
\mathrm{CR}:(\mathbb{C}^{2})^{4}&\to \mathbb{C}\\
(v_{0}, v_{1}, v_{2}, v_{3})&\mapsto -\frac{\det(v_{0}, v_{3})\det(v_{1}, v_{2})}{\det(v_{0}, v_{1})\det(v_{2}, v_{3})}.
\end{align*}
descends to a function $(\mathbb{C}P^{1})^{4}\to \mathbb{C}$, the cotangent vector $d\log \mathrm{CR}|_{(v_{0}(\Delta),\ldots, v_{3}(\Delta))}\ne 0$ and vanishes on the image of $ev(\Delta)$.
\end{proposition}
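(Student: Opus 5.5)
The plan is to check the three assertions in order: that $\mathrm{CR}$ descends, that it is invariant under $PSL(2,\mathbb C)$, and that its logarithmic differential is nonzero. The identification of tangent vectors is the one used throughout this subsection: a tangent vector at $[v]\in\mathbb CP^1$ is represented by $w\in\mathbb C^2$ modulo $\langle v\rangle$, namely the derivative of the curve $[v+tw]$ at $t=0$. Under this identification, $\epsilon_{\tilde v}$ corresponds to any $w$ with $\det(w,\tilde v)=1$. The value $X|_{[v]}$ of the vector field induced by $X\in\mathfrak{sl}_2(\mathbb C)$ corresponds to $w=Xv$, since $\exp(tX)v=v+tXv+O(t^2)$.

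\emph{Descent.} The numerator and the denominator of $\mathrm{CR}$ are each bilinear in the pair of determinants involving each $v_i$, and each $v_i$ appears exactly once upstairs and once downstairs. Rescaling $v_i\mapsto a_iv_i$ therefore multiplies numerator and denominator by the same factor $a_0a_1a_2a_3$. Hence $\mathrm{CR}$ is well defined on the open subset of $(\mathbb CP^1)^4$ where $[01],[23]\neq0$. This subset contains the vertex tuples of $\Delta$, because the four developed ideal vertices are pairwise distinct. At these tuples $\mathrm{CR}\neq0$ as well, so $\log\mathrm{CR}$ makes sense locally.

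\emph{Vanishing on the image of $ev(\Delta)$.} For $g\in SL(2,\mathbb C)$ we have $\det(gv_i,gv_j)=\det(v_i,v_j)$, so $\mathrm{CR}(gv_0,\dots,gv_3)=\mathrm{CR}(v_0,\dots,v_3)$. I would take $g=\exp(tX)$ and differentiate at $t=0$. This gives
\[
\frac{d}{dt}\Big|_{t=0}\log\mathrm{CR}\big(\exp(tX)v_0,\dots,\exp(tX)v_3\big)=d\log\mathrm{CR}\big(X|_{v_0},X|_{v_1},X|_{v_2},X|_{v_3}\big)=0 .
\]
By definition, $ev(\Delta\otimes X)=\sum_i\partial_i\Delta\otimes X|_{v_i(\Delta)}$, so $d\log\mathrm{CR}$ annihilates the whole image. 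For an explicit check, one can expand $\frac{d}{dt}\log\det(v_i+tXv_i,v_j+tXv_j)=\big(\det(Xv_i,v_j)+\det(v_i,Xv_j)\big)/[ij]$. The numerator is $\mathrm{tr}(X)\det(v_i,v_j)=0$.

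\emph{Nonvanishing.} I would evaluate $d\log\mathrm{CR}$ on a vector supported in the $0$-th slot, moving $v_0$ to $v_0+tw$ and fixing the others. The derivative is
\[
\frac{\det(w,v_3)}{[03]}-\frac{\det(w,v_1)}{[01]} .
\]
Taking $w=v_1$ gives $[13]/[03]$. This is nonzero because $[v_1]\neq[v_3]$ and $[v_0]\neq[v_3]$ for a nondegenerate ideal tetrahedron, which holds since $z_\Delta\neq0,1$. Hence $d\log\mathrm{CR}\neq0$. Equivalently, pairing with $\partial_0\Delta\otimes\epsilon_{v_0}$ gives a nonzero multiple of $[13][01]/([03][01])$ up to normalization.

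The only real subtlety is the bookkeeping in identifying $X|_{v}$ with $Xv \bmod v$ consistently with the pairing $\det(\,\cdot\,,v)$ used to define $\epsilon_{\tilde v}$ and $ev$ in Propositions \ref{prop:edgedet} and \ref{prop:facedet}. Once that identification is fixed, all three assertions follow from the computations above. As a byproduct, $ev(\Delta)$ has image of dimension at most $3$ inside the $4$-dimensional space, with $d\log\mathrm{CR}$ spanning the annihilator.
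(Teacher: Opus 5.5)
Your proof is correct and follows the paper's: descent because each $v_i$ occurs exactly once in the numerator and once in the denominator, and vanishing on the image of $ev(\Delta)$ by differentiating the $SL(2,\mathbb C)$-invariance of $\mathrm{CR}$ along $\exp(tX)$. The only variation is the nonvanishing step. The paper normalizes the vertices to $\{\infty,0,1,z\}$ and reads off $d\log\frac{1}{1-z}\neq 0$, while you differentiate directly in the $0$-th slot, getting $[13]_\Delta/[03]_\Delta$ for $w=v_1$, i.e.\ $-[13]_\Delta/([03]_\Delta[01]_\Delta)$ on $\epsilon_{v_0(\Delta)}$; this is exactly the computation the paper later uses to construct $s_\Delta$ in Proposition~\ref{prop:tetdet}.
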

\begin{proof}
Since each of $v_{0}, v_{1}, v_{2}, v_{3}$ appears in the numerator and the denominator exactly once, the value of $\mathrm{CR}$ does not depend on the lifting from $\mathbb{C}P^{1}$ to $\mathbb{C}^{2}$. Hence $\mathrm{CR}$ defines a function on $(\mathbb CP^{1})^{4}$.

Then we notice $\mathrm{CR}$, hence $\log \mathrm{CR}$, is invariant under the action of $PSL(2, \mathbb C)$. To show 
$$d\log \mathrm{CR}|_{(v_{0}(\Delta),\ldots, v_{3}(\Delta))}\ne 0,$$ we may assume $\{v_{0}(\Delta), v_{1}(\Delta), v_{2}(\Delta), v_{3}(\Delta)\}=\{ \infty, 0, 1, z\}$. By direct computation,
$$d\log \mathrm{CR}=d\log\left(\frac{1}{1-z}\right)$$
which is nonzero if $z\notin \{0, 1, \infty\}$.

It remains to prove $d\log \mathrm{CR}$ vanishes on the image of $ev(\Delta)$. Since the tangent vectors in $ev(\Delta)$ comes from the infinitesimal action of the Lie algebra of $PSL(2, \mathbb{C})$, the derivative of $\mathrm{CR}$ vanishes in those directions by the $PSL(2, \mathbb C)$-invariance of $\mathrm{CR}$.
\end{proof}

Now for each tetrahedron $\Delta\in \bar T$, we define $s_{\Delta}\in \langle \partial_{i}\Delta\otimes \epsilon_{v_{i}(\Delta)} |i\in \{0, 1, 2, 3\}\rangle$ such that 
\begin{equation}\label{eq:defineS}
d\log \mathrm{CR}|_{(v_{0}(\Delta),\ldots, v_{3}(\Delta))}(s_{\Delta})=1.   
\end{equation}

\begin{proposition}\label{prop:tetdet}
For each tetrahedron $\Delta\in \bar T$, and any choice of $s_{\Delta}$ satisfying \eqref{eq:defineS}, we have
$$[ev(\Delta)\otimes \mathfrak e, ev(\Delta)\otimes \mathfrak f, ev(\Delta)\otimes \mathfrak h, s_{\Delta};\partial_{0}\Delta\times \epsilon_{v_{0}(\Delta)},\ldots,\partial_{3}\Delta\times \epsilon_{v_{3}(\Delta)}]=\pm {2}{[03]_{\Delta}[12]_{\Delta}[01]_{\Delta}[23]_{\Delta}}.$$
\end{proposition}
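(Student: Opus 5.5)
The plan is to reduce the $4\times 4$ determinant to a single cofactor expansion along the column $s_\Delta$, and then to identify the cofactor vector with a multiple of the covector $d\log\mathrm{CR}$.

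Write $W$ for the $4\times 3$ matrix whose columns are $ev(\Delta)\otimes\mathfrak e$, $ev(\Delta)\otimes\mathfrak f$ and $ev(\Delta)\otimes\mathfrak h$, expressed in the basis $\partial_i\Delta\otimes\epsilon_{v_i(\Delta)}$. As in the proof of Proposition \ref{prop:facedet}, its $i$-th row is $(y_i^2,-x_i^2,2x_iy_i)$, where $v_i(\Delta)=(x_i,y_i)^T$. Let $w=(w_0,\dots,w_3)$ be the row vector $w_i=d\log\mathrm{CR}(\partial_i\Delta\otimes\epsilon_{v_i(\Delta)})$.

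\textbf{Step 1: the cofactors.} Expanding $\det[W\mid s_\Delta]$ along the last column gives $\sum_i \pm (s_\Delta)_i\,c_i$. Here $c_i$ is the $3\times 3$ minor of $W$ obtained by deleting row $i$. By the computation in Proposition \ref{prop:facedet}, applied to the three remaining vertices, $c_i=\pm 2\prod_{j<k,\ j,k\neq i}[jk]_\Delta$. In particular every $c_i$ is nonzero, so $W$ has rank $3$.

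\textbf{Step 2: proportionality with $w$.} By Proposition \ref{prop:crossratio}, $w$ is nonzero and annihilates the column space of $W$. Since that column space is $3$-dimensional, $w$ spans the annihilator. The signed cofactor vector $(\pm c_i)$ also annihilates the columns of $W$: pairing it with any column of $W$ gives the determinant of a matrix with a repeated column, which is $0$. Hence $(\pm c_i)_i=\lambda w$ for a single scalar $\lambda$. Consequently
$$\det[W\mid s_\Delta]=\lambda\, w(s_\Delta)=\lambda.$$
This also shows the answer is independent of the choice of $s_\Delta$, as claimed.

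\textbf{Step 3: computing $\lambda$.} I will compute $w_i$ explicitly. A tangent vector $\epsilon_{v_0}$ is realized by a first-order deformation $v_0\mapsto v_0+tu$ with $\det(u,v_0)=1$. Then
$$w_0=\frac{\det(u,v_3)}{[03]}-\frac{\det(u,v_1)}{[01]}=\frac{\det(u,v_3)[01]-\det(u,v_1)[03]}{[01][03]}.$$
The Plücker relation for $u,v_0,v_1,v_3$ turns the numerator into $\pm\det(u,v_0)[13]=\pm[13]$. Thus $w_0=\pm[13]/([01][03])$, and the analogous formulas hold for $w_1,w_2,w_3$, each involving the two factors of $\mathrm{CR}$ containing that vertex.

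Comparing with Step 1, $c_0/w_0=\pm 2[12][13][23]\cdot[01][03]/[13]=\pm 2[01][03][12][23]$. The same product $\pm2[03][12][01][23]$ should come out for each $i$; this is a consistency check on Step 2.

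The main obstacle is the sign bookkeeping in the Plücker manipulations and in the cofactor signs. Since the statement is only up to sign, I will only check that the modulus-level product is the same for every $i$. The nontrivial points are that $\epsilon_{v_i}$ is normalized by $\det(\epsilon(v_i),v_i)=1$, which fixes $\det(u,v_i)=1$, and that the homogeneity in each $v_i$ matches on both sides (degree $2$ in $v_i$ for $c_i/w_i$), which confirms the formula is well defined.
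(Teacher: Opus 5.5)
Your proof is correct and follows essentially the paper's route. The paper also observes that the determinant does not change when $s_\Delta$ is altered by elements of $\ker d\log\mathrm{CR}=\mathrm{Im}\,ev(\Delta)$. It takes the representative $s_\Delta=-\frac{[03]_\Delta[01]_\Delta}{[13]_\Delta}\epsilon_{v_0(\Delta)}$, which is exactly $w_0^{-1}\epsilon_{v_0(\Delta)}$ with your $w_0=-[13]_\Delta/([01]_\Delta[03]_\Delta)$, and expands along the last column to pick up the single minor $2[12]_\Delta[13]_\Delta[23]_\Delta$ from Proposition \ref{prop:facedet}. Your cofactor-proportionality argument in Step 2 is a cleaner packaging of that independence claim. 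It also makes explicit that $ev(\Delta)$ has rank $3$, which the paper uses implicitly when it identifies the kernel of $d\log\mathrm{CR}$ with the image of $ev(\Delta)$.
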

\begin{proof}
By Proposition \ref{prop:crossratio}, $s_{\Delta}$ is uniquely defined up to the kernel of $d\log \mathrm{CR}$, which is equal to the image of $ev(\Delta)$. Hence, change $s_{\Delta}$ by any vectors in $\mathrm{Image}(ev(\Delta))$ does not change the determinant.

We claim $-\frac{[03]_{\Delta}[01]_{\Delta}}{[13]_{\Delta}}\epsilon_{v_{0}(\Delta)}$ is a choice of $s_{\Delta}$. Notice this tangent vector describes the action:
$$A(t):(v_{0}, v_{1}, v_{2}, v_{3})\mapsto (v_{0}+\frac{[03]_{\Delta}}{[13]_{\Delta}}tv_{1}, v_{1}, v_{2}, v_{3}).$$
The derivative
$$\frac{d\log \mathrm{CR}(A(t)(v_{0},\ldots,v_{3}))}{dt}=\frac{[13]_{\Delta}}{[03]_{\Delta}}\frac{[03]_{\Delta}}{[13]_{\Delta}}-\frac{[11]_{\Delta}}{[01]_{\Delta}}\frac{[03]_{\Delta}}{[13]_{\Delta}}=1.$$
This verifies the claim.
Now we compute the determinant:
\begin{align*}
&\det\begin{bmatrix}
\det({\mathfrak e v_{0}, v_{0}}) & \det({\mathfrak fv_{0}, v_{0}}) & \det({\mathfrak hv_{0}, v_{0}}) & -\frac{[03]_{\Delta}[01]_{\Delta}}{[13]_{\Delta}}\\
\det({\mathfrak ev_{1}, v_{1}}) & \det({\mathfrak f v_{1}, v_{1}}) & \det({\mathfrak hv_{1}, v_{1}}) & 0\\
\det({\mathfrak ev_{2}, v_{2}}) & \det({\mathfrak f v_{2}, v_{2}}) & \det({\mathfrak hv_{2}, v_{2}}) & 0\\
\det({\mathfrak ev_{3}, v_{3}}) & \det({\mathfrak f v_{3}, v_{3}}) & \det({\mathfrak hv_{3}, v_{3}}) & 0\\
\end{bmatrix}\\
=&-\frac{[03]_{\Delta}[01]_{\Delta}}{[13]_{\Delta}}\big(2[12]_{\Delta}[13]_{\Delta}[23]_{\Delta}\big)\\
=& -2[03]_{\Delta}[12]_{\Delta}[01]_{\Delta}[23]_{\Delta}.
\end{align*}
This proves the proposition.
\end{proof}
We define the geometric basis for $\mathcal{K}_{2}=\mathcal{A}_{2}\oplus \mathcal{B}_{2}$ to be
$$G_{2}:=\{f\otimes \mathfrak e, f\otimes \mathfrak f, f\otimes \mathfrak h|f\in \bar F\}\cup\{ev(\Delta\otimes \mathfrak e), ev(\Delta\otimes \mathfrak f), ev(\Delta\otimes \mathfrak h), s_{\Delta}|\Delta\in \bar T\}$$
and the geometric basis for $\mathcal{K}_{3}=\mathcal{A}_{3}$ to be
$$G_{3}:=\{\Delta\otimes \mathfrak{e}, \Delta\otimes \mathfrak{f}, \Delta\otimes \mathfrak{h}|\Delta\in \bar T\}.$$
Proposition \ref{prop:tetdet} implies $G_{2}$ is a basis.
\begin{remark}
Although tangent vectors $\epsilon_{v}$ depend on the lift of vertex $v$. The vectors $s_{\Delta}$ and $\kappa_{e}$ are independent of the lifting. Therefore, $s_{\Delta}$ and $\kappa_{e}$ are actually defined for $\Delta\in T$ and $e\in E$ respectively.
\end{remark}
\subsubsection{Flattening monomial as determinant of changing basis}
We denote the chain complex $\mathcal{K}_{\bullet}$ equipped with the geometric basis $\boldsymbol{G}=\{G_{3},\ldots, G_{0}\}$ by $\mathcal{K}_{\bullet}^{\boldsymbol{G}}$, The flattening monomial in \eqref{eq:main1} naturally appears in the determinant of the change from the topological basis to the geometric basis.
\begin{proposition}\label{prop:flat}
Let $f_{\Delta}, f_{\Delta}', f_{\Delta}''$ be a strong flattening of $\mathcal{T}$. Then
$$\mathrm{tor}(\mathcal{K}_{\bullet};\boldsymbol{h}_{\bullet})=\pm 2^{|T|}\prod_{\Delta\in T}z_{\Delta}^{f_{\Delta}''}(z_{\Delta}'')^{-f_{\Delta}}\mathrm{tor}(\mathcal{K}_{\bullet}^{\boldsymbol{G}}; \boldsymbol{h}_{\bullet}).$$
for any choice of basis $\boldsymbol{h}_{\bullet}$ of the homology groups.
\end{proposition}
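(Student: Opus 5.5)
The plan is to compute the torsion change under change of basis. For a based complex, replacing the basis $\boldsymbol c_k$ by $\boldsymbol c_k'$ multiplies the torsion by $\prod_k [\boldsymbol c_k';\boldsymbol c_k]^{(-1)^{k}}$, up to the sign convention fixed in Section~\ref{sec:2.1}. This holds for every choice of homology bases $\boldsymbol h_\bullet$, so it suffices to compute the four determinants $[G_k;\text{top}_k]$ and take their alternating product.

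\textbf{The four determinants.}
\begin{enumerate}
\item Degree $3$: $G_3$ is the topological basis, so $[G_3;\text{top}_3]=1$.
\item Degree $0$: $G_0$ agrees with the topological basis up to the sign identifications of Corollary~\ref{cor:equivariantB}, so $[G_0;\text{top}_0]=\pm1$.
\item Degree $1$: $\mathcal K_1=\mathcal A_1\oplus\mathcal B_1$, and $G_1$ is block triangular with respect to this splitting. The $\mathcal A_1$-part consists of the vectors of Proposition~\ref{prop:edgedet}. The images $ev(f)\otimes X$ sit only in $\mathcal B_1$. Hence the determinant is
\[
\prod_{e}[01]_e^{-2}\cdot\prod_{f}2[01]_f[02]_f[12]_f,
\]
by Propositions~\ref{prop:edgedet} and~\ref{prop:facedet}.
\item Degree $2$: the same triangularity together with Proposition~\ref{prop:tetdet} gives
\[
\prod_{\Delta}2[03]_\Delta[12]_\Delta[01]_\Delta[23]_\Delta .
\]
\end{enumerate}
The alternating product is therefore
\[
2^{|T|-|F|}\prod_\Delta [03][12][01][23]\ \cdot\ \Bigl(\prod_f [01]_f[02]_f[12]_f\Bigr)^{-1}\prod_e [01]_e^{2},
\]
possibly inverted depending on the convention. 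Since $|F|=2|T|$ and $|E|=|T|$ for an ideal triangulation, the powers of $2$ do not yet match $2^{|T|}$. I would track this by working with the exact sign and exponent convention. My suspicion is that the face factor is in fact $2^{-|F|}$ combined with a $2^{|T|}$ normalisation elsewhere, and I would fix this bookkeeping at the end.

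\textbf{Cancellation of brackets.} Each bracket $[ij]$ is attached to an edge-corner pair of lifted vertices. Each face $f$ is glued to two tetrahedra, and its three brackets appear as brackets of those tetrahedra. By Corollary~\ref{cor:equivariantB}, lifted vertex vectors of the same corner orbit agree up to $\pm$ and transport by $\rho(g)\in SL_2$, which preserves $\det$. So I would rewrite every bracket as a bracket between transported vectors $\eta\rho(g)v_i$, indexed by edges of $\mathcal T$ together with a corner choice.

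Using $\prod_f = \prod_\Delta$ of its four faces$^{1/2}$, each tetrahedron contributes
\[
\frac{[03][12][01][23]}{\bigl([01][02][03][12][13][23]\bigr)^{1/2}\cdots}.
\]
The half-powers recombine, per tetrahedron, into ratios of pairs of opposite edges:
\[
\frac{[03][12]}{[02][13]}=z^{\pm1}\ \text{(or }z''\text{, etc.)}, \qquad \frac{[01][23]}{[02][13]}.
\]
These are the cross-ratios, since $\mathrm{CR}=-[03][12]/([01][23])$ gives the shapes.

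\textbf{Residual edge factors and the flattening.} The bracket of an edge $e$ computed in different tetrahedra around $e$ differs by the lifts chosen at its two endpoints. So the global product becomes $\prod_\Delta z_\Delta^{a_\Delta}z_\Delta'^{\,b_\Delta}z_\Delta''^{\,c_\Delta}$ times residual factors $\prod_e [01]_e^{k_e}$ times ratios of lift scalars. To convert the residual $\prod_e[01]_e^{2}$ and the square roots into a monomial in the shapes, I would use the flattening. For each edge, the gluing identity $\sum G_{e\Delta}\log z+\dots=2\pi i$ corresponds multiplicatively to $\prod_{\Delta\ni e}(\text{ratio of brackets})=1$ around $e$. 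Each bracket $[ij]$ is then expressed as a product of ratios telescoping around edge links. The integer weights $f,f',f''$ with $f+f'+f''=1$ and edge sums $2$ are exactly what is needed to choose "which square root" per tetrahedron consistently.

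The strong condition (peripheral sums $0$) ensures that the factors $\rho(g)$ transporting lifts around the boundary tori cancel. Otherwise one picks up eigenvalue powers $x_m,x_l$. After this, the result is $\prod_\Delta z_\Delta^{f''_\Delta}(z''_\Delta)^{-f_\Delta}$, using $z'=1/(1-z)$, $z''=1-1/z$, and $zz'z''=-1$ to normalise.

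\textbf{Main obstacle.} The main obstacle is this last telescoping step. One must show that the product of brackets over all corners, weighted as above, equals the flattening monomial exactly, rather than up to a root of unity or a lift-dependent scalar. I would first handle it by choosing lifts with $\det$ relations normalised in a fundamental domain ($S$ from Proposition~\ref{prop:lifttorus}). I would then verify the identity locally at each edge using the gluing equation and the flattening condition, and check globally via the peripheral (strong flattening) condition. Signs are only claimed up to $\pm$.
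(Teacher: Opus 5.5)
Your change-of-basis bookkeeping reproduces the paper's \eqref{eq:gtcompare} exactly, and the power of $2$ already matches. Since $|F|=2|T|$, the factor $2^{|T|-|F|}=2^{-|T|}$ appears in $\mathrm{tor}(\mathcal K^{\boldsymbol G}_\bullet)/\mathrm{tor}(\mathcal K_\bullet)$, which is $2^{|T|}$ in the direction of the statement, so no further normalisation is needed.

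The gap is in the step you label the main obstacle, which is the actual content of the proposition. The mechanism you sketch there (gluing equations, telescoping around edge links, the flattening choosing square roots) is not what makes it work. First, the face--tetrahedron comparison leaves no half-powers. Each face corner is compared with the corners of the tetrahedra containing it (Lemma~\ref{lem:transportface}). The two lifted vectors differ by a peripheral eigenvalue $\chi_\rho(d_i)$, not merely by $\pm\rho(g)$, because a face corner and a tetrahedron corner are carried to $S$ by different group elements. Since every edge of $\Delta$ lies on two of its faces, each tetrahedron contributes exactly $[03][12][01][23]/\prod_{i<j}[ij]=1/([02]_\Delta[13]_\Delta)$. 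The ratio becomes $2^{-|T|}\prod_\Delta([02]_\Delta[13]_\Delta)^{-1}\prod_e[01]_e^2\prod_iM_i$, where $M_i$ is an eigenvalue monomial coming only from the boundary paths of $D_i$. This residual is not invariant under rescaling the lifts. So relations among shape parameters, such as the edge gluing equations, cannot by themselves turn it into a shape monomial.

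The missing idea is to use the flattening as integer exponents that redistribute this residual. Because $\sum_\Delta G_{e\Delta}f_\Delta+G'_{e\Delta}f'_\Delta+G''_{e\Delta}f''_\Delta=2$, you can write $[01]_e^2$ as a product over the tetrahedron edges identified with $e$: each factor is $[01]_e$ raised to the flattening weight of that edge. Transport each factor to the tetrahedron bracket by Lemma~\ref{lem:transportedge}. This yields $\prod_\Delta([01][23])^{f_\Delta}([02][13])^{f'_\Delta}([03][12])^{f''_\Delta}$, up to eigenvalue factors $N_i$. Writing $([02][13])^{-1}=([02][13])^{-(f_\Delta+f'_\Delta+f''_\Delta)}$ then gives $\prod_\Delta\bigl(\tfrac{[01][23]}{[02][13]}\bigr)^{f_\Delta}\bigl(\tfrac{[03][12]}{[02][13]}\bigr)^{f''_\Delta}=\prod_\Delta(z''_\Delta)^{f_\Delta}z_\Delta^{-f''_\Delta}$. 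Finally, using $f+f'+f''=1$ once more, $M_iN_i$ is a monomial in the peripheral eigenvalues. Its exponents are exactly the peripheral flattening sums along the sides of $D_i$, and these vanish for a strong flattening. No gluing equation and no choice of square root is involved. Once this computation is supplied, your outline becomes the paper's proof.
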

\begin{proof}
Proposition \ref{prop:edgedet} and \ref{prop:facedet} computed the determinant of changing the topological basis to geometric basis of $\mathcal{K}_{1}$, and $\ref{prop:tetdet}$ compute the determinant of changing the topological basis to geometric basis of $\mathcal{K}_{2}$. The topological basis and the geometric basis are identical of $\mathcal{K}_{3}$ and $\mathcal{K}_{0}$. Putting them together, we have
\begin{equation}\label{eq:gtcompare}
\frac{\mathrm{tor}(\mathcal{K}_{\bullet}^{G};\boldsymbol{h}_{\bullet})}{\mathrm{tor}(\mathcal{K}_{\bullet};\boldsymbol{h}_{\bullet})}=\prod_{\Delta\in \bar T}(2[03]_{\Delta}[12]_{\Delta}[01]_{\Delta}[23]_{\Delta})\prod_{f\in \bar F}\frac{1}{2[01]_{f}[02]_{f}[12]_{f}}\prod_{e\in \bar E}[01]_{e}^{2}
\end{equation}
We need the following lemma to compare the determinant factors coming from cells of different dimensions.
\begin{lemma}\label{lem:transportface} Let $\Delta\in \bar T$ be a tetrahedra and let $f\in \bar F$ be a face that are identified with a face $\hat f$ of $\Delta$. Denote $a:\{0, 1, 2\}\to \{0, 1, 2, 3\}$ the identification of vertices of $f$ and $\Delta$. Let $g_{\Delta}^{i}$ be the unique element sending some cell in $S$ to the $i$-th corner $\partial_{i}\Delta$. We have
    $$\frac{[a(i)a(j)]_{\Delta}}{[ij]_{f}}=\pm{\chi_{\rho}(d_{i})\chi_{\rho}(d_{j})}$$
 where $\chi_{\rho}$ is the induced character on $\pi_{1}(\partial M)$, and $d_{i}$ is the unique deck transformation taking $\partial_{a(i)}(g_{\Delta}^{a(i)}\hat f)$ to a cell belongs to $S$.
\end{lemma}
\begin{proof}
Let $g_{f}^{i}$ be the group element in $\pi_{1}(M)$ sending a cell belongs to $S$ to the $i$-th corner of $f$, and $g$ be the unique group element in $\pi_{1}(M)$ identifies $f$ with a face of $\Delta$. We have the following relations between these group elements:
$$g_{\Delta}^{a(i)}=g\circ g_{f}^{i}\circ d_{i}$$
$$g_{\Delta}^{a(j)}=g\circ g_{f}^{j}\circ d_{j}$$
Suppose the $i$-th and $j$-th vertices of face $f$ end at the $m$-th and $n$-th boundary component of $M$. We have
\begin{align*}
\frac{[a(i)a(j)]_{\Delta}}{[ij]_{f}}=\frac{\det(\rho(g_{\Delta}^{a(i)})v_{m}, \rho(g_{\Delta}^{a(j)}v_{n})}{\det(\rho(g_{f}^{i})v_{m}, \rho(g_{f}^{j})v_{n})}=\frac{\det(\rho(g\circ g_{f}^{i}\circ d_{i}) v_{m}, \rho(g\circ g_{f}^{j}\circ d_{j}) v_{n})}{\det(\rho(g_{f}^{i})v_{m}, \rho(g_{f}^{i})v_{n})}=\pm{\chi_{\rho}(d_{i})\chi_{\rho}(d_{j})}
\end{align*}
The firse quality is the definition, and the third equality follows from $v_{m}$ and $v_{n}$ are eigenvectors of $d_{i}$ and $d_{j}$ respectively. Using identical argument, we also have the following lemma for the edges.
\end{proof}
\begin{lemma}\label{lem:transportedge} Let $\Delta\in \bar T$ be a tetrahedra and let $e\in \bar E$ be a face that are identified with a face $\hat e$ of $\Delta$. Denote $a_{e, \Delta}:\{0, 1 \}\to \{0, 1, 2, 3\}$ the identification of vertices of $f$ and $\Delta$. Let $g_{\Delta}^{i}$ be the unique element sending the $i$-th corner $\partial_{i}\Delta$ to $S$. We have
    $$\frac{[a_{e, \Delta}(i)a_{e, \Delta}(j)]_{\Delta}}{[ij]_{e}}=\chi_{\rho}(d_{i})\chi_{\rho}(d_{j})$$
    where $\chi_{\rho}$ is the induced character on $\pi_{1}(\partial M)$, and $d_{i}$ is the unique deck transformation taking $\partial_{a(i)}(g_{\Delta}^{a(i)}\hat e)$ to a cell belongs to $S$.
\end{lemma}
Notice that each face of $\bar{F}$ is attached to faces of two tetrahedra. If we transport corners of each faces of each tetrahedra in $\bar{T}$, by element in $\pi_{1}(M)$ that transport the corner of the tetrahedron it belongs to to $S$. We obtains all the inner $1$-cells of $D_{i}$, counted twice; and all the boundary $1$-cells of $D_{i}$, counted once. Applying Lemma \ref{lem:transportface}, we obtains
\begin{equation}
\frac{\mathrm{tor}(\mathcal{K}_{\bullet}^{G};\boldsymbol{h}_{\bullet})}{\mathrm{tor}(\mathcal{K}_{\bullet};\boldsymbol{h}_{\bullet})}=2^{-|T|}\prod_{\Delta\in {T}}\frac{1}{[02]_{\Delta}[13]_{\Delta}}\prod_{e\in E}[01]_{e}^{2}\prod_{i\in V}M_{i}
\end{equation}
where 
$$M_{i}=x_l^{|m'|}x_{m}^{-|l'|},$$
if the fundamental domain $D_{i}$ of $\partial_{i}M$ is of type (4,2), and
$$M_{i}=x_{al}^{|m'|}x_{ma}^{-|l'|}x_{lm^{-1}}^{|a'|}$$
if the fundamental domain $D_{i}$ of $\partial_{i}M$ is of type (3,3). Here $|s|$ is the number of $1$-cells in the path $s$, and $x_{m}, x_{l}, x_{al}, x_{ma}$ and $x_{lm^{-1}}$ are the same as those in the proof of Proposition \ref{prop:acyclicB}.

Applying the combinatorial flattening condition that the sum of numbers around an edge equals to $2$ together with Lemma \ref{lem:transportedge} we have
\begin{equation}\label{eq:distributeflat}
\begin{aligned}
    \prod_{\Delta\in {T}}\frac{1}{[02]_{\Delta}[13]_{\Delta}}\prod_{e\in E}[01]_{e}^{2}&=\prod_{\Delta\in {T}}(\frac{[01][23]}{[02][13]})^{f_{\Delta}}(\frac{[02][13]}{[02][13]})^{f_{\Delta}'}(\frac{[03][12]}{[02][13]})^{f_{\Delta}''}\prod_{i\in V}N_{i}\\
&=\prod_{\Delta\in {T}}(z_{\Delta}'')^{f_{\Delta}}z_{\Delta}^{-f_{\Delta}''}\prod_{i\in V}N_{i}
\end{aligned}
\end{equation}
where 
$$N_{i}=x_{l}^{-f(m')}x_{m}^{f(l')}$$
if the fundamental domain $D_{i}$ of $\partial_{i}M$ is of type (4,2). Here $f(m_{i}')$ and $f(l_{i}')$ equal the sum of the flattening numbers over the corners in $D_{i}$ adjacent to $0$-cells of $m_{i}$ and $l_{i}$ respectively, and 
$$N_{i}=x_{al}^{-f(m')}x_{ma}^{f(l')}x_{lm^{-1}}^{-f(a')}.$$
if the fundamental domain $D_{i}$ of $\partial_{i}M$ is of type (4,2). Here $f(m_{i}')$ and $f(l_{i}')$ equal the sum of the flattening numbers over the corners in $D_{i}$ adjacent to $0$-cells in $\partial D_{i}\setminus \{a_{i}'\}$ of $m_{i}$ and $l_{i}$ respectively, and $f(a_{i}')$ equal the sum of the flattening numbers over the corners in $D_{i}$ adjacent to $0$-cells in $a_{i}'$.

It remains to show $M_{i}N_{i}=1$. For case $(4, 2)$, we have
\begin{equation}\label{eq:42cancel}
M_{i}N_{i}=x_{l}^{\sum_{\Delta}G_{m, \Delta}f_{\Delta}+G_{m, \Delta}'f_{\Delta}'+G_{m, \Delta}''f_{\Delta}}x_{m}^{-\sum_{\Delta}G_{l, \Delta}f_{\Delta}-G_{l, \Delta}'f_{\Delta}'-G_{l, \Delta}''f_{\Delta}}=1
\end{equation}
where the first equality follows form $f_{\Delta}+f_{\Delta}'+f_{\Delta}''=1$, and the second equality follows from the strong flattening condition.

For case $(3, 3)$, we have
\begin{equation}\label{eq:33cancel}
M_{i}N_{i}=x_{al}^{\sum_{\Delta}G_{a\circ m, \Delta}f_{\Delta}+G_{a\circ m, \Delta}'f_{\Delta}'+G_{a\circ m, \Delta}''f_{\Delta}}x_{ma}^{-\sum_{\Delta}G_{l\circ a, \Delta}f_{\Delta}-G_{l\circ a, \Delta}'f_{\Delta}'-G_{l\circ a, \Delta}''f_{\Delta}}=1
\end{equation}
where the first equality follows form $f_{\Delta}+f_{\Delta}'+f_{\Delta}''=1$, together with $x_{lm^{-1}}x_{al}^{-1}x_{ma}=1$ and the second equality follows from the strong flattening condition.

Combining Equation \eqref{eq:gtcompare}, Equation \eqref{eq:distributeflat}, Equation \eqref{eq:42cancel} and Equation \eqref{eq:33cancel}, we proved the desired equality. This complete the proof of proposition.
\end{proof}

\begin{remark}
The Equation \eqref{eq:42cancel} and Equation \eqref{eq:33cancel} in the proof indicates the strong combinatorial flattening condition is necessary. For the discrete faithful representation $\rho_{0}$, a combinatorial flattening suffices for Proposition \ref{prop:flat}.
\end{remark}    

\subsubsection{Torsion of Retracted Complex}
For the based chain complex $\mathcal{K}_{\bullet}^{G}$, the following blocs are represented by identity matrices:
$$\mathcal{A}_{3}=\langle\Delta\otimes \mathfrak e, \Delta\otimes \mathfrak f, \Delta\otimes \mathfrak h\rangle_{\Delta\in T} \to \langle ev(\Delta\otimes \mathfrak e), ev(\Delta\otimes \mathfrak f),ev(\Delta\otimes \mathfrak h) \rangle_{\Delta\in T}\subset \mathcal{B}_{2},$$
$$\mathcal{A}_{2}=\langle f\otimes \mathfrak e, f\otimes \mathfrak f, f\otimes \mathfrak h\rangle_{f\in F}\to \mathcal{B}_{1}=\langle ev(f\otimes \mathfrak e), ev(f\otimes \mathfrak f),ev(f\otimes \mathfrak h) \rangle_{f\in F},$$
$$\mathcal{A}_{1}\supset\langle[01]_{e}^{-1} e\otimes M_{\mathfrak e}(e), -[01]_{e}^{-1}e\otimes M_{\mathfrak f}(e)\rangle_{e\in E} \to \mathcal{B}_{0}=\langle e\otimes\epsilon_{v_{1}(e)}, e\otimes\epsilon_{v_{0}(e)}\rangle_{e\in E} .$$
Applying Lemma \ref{lem:retraction}, we have
\begin{proposition}\label{prop:matrixK}
We have
$$\mathrm{tor}(\mathcal{K}_{\bullet}^{\boldsymbol{G}};\boldsymbol{h}^{\gamma}_{\bullet})=\mathrm{tor}(\mathbb{C}^{T}\xrightarrow{D} \mathbb C^{E}; \{s_{\Delta}\}_{\Delta\in T}\cup \{\kappa_{e}\}_{e\in E}, \boldsymbol{h}^{\gamma}_{\bullet})$$
where the map $K$ is the following composition
$$\mathbb C^{T}\to \mathcal{B}_{2}\to \mathcal{B}_{1}\xrightarrow{ev_{2}^{-1}}\mathcal{A}_{2}\to \mathbb{C}^{E}$$
In addition, the matrix representation of $D$ with respect to basis $s_{\Delta}$ and $\kappa_{e}$ is 
$$K=\pm \frac{1}{2}(A\Delta_{z''}+B\Delta_{z}^{-1}).$$
\end{proposition}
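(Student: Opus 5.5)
The plan is to apply Lemma~\ref{lem:retraction} three times to $\mathcal{K}_{\bullet}^{\boldsymbol G}$, once for each of the identity blocks listed before the proposition, and then to compute the residual differential explicitly.

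\textbf{Retraction.} In the geometric basis the three blocks are identities:
\begin{itemize}
\item $\mathcal{A}_3\to\langle ev(\Delta\otimes X)\rangle\subset\mathcal{B}_2$;
\item $\mathcal{A}_2\to\mathcal{B}_1$, which is an isomorphism by Proposition~\ref{prop:facedet};
\item $\langle [01]_e^{-1}e\otimes M_{\mathfrak e}(e),\,-[01]_e^{-1}e\otimes M_{\mathfrak f}(e)\rangle\to\mathcal{B}_0$, by Proposition~\ref{prop:edgedet}(2).
\end{itemize}
Cancelling these blocks leaves $\langle s_\Delta\rangle_{\Delta\in T}$ in degree $2$ and $\langle \kappa_e\rangle_{e\in E}$ in degree $1$, where $\kappa_e:=e\otimes M_{\mathfrak h}(e)$ spans $\ker ev$ on the edge summand. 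Degrees $3$ and $0$ are emptied. Each cancelled block is exactly the identity in the chosen bases, so the torsion assertion of Lemma~\ref{lem:retraction} applies with no extra factor.

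The Schur-complement formula $\alpha-\beta\delta^{-1}\gamma$ then gives the residual differential as the stated composite, up to an overall sign:
\[
s_\Delta\;\mapsto\; d_{\mathcal B}(s_\Delta)\in\mathcal{B}_1\;\mapsto\; ev_2^{-1}\;\mapsto\; d_{\mathcal A}\in\mathcal{A}_1\;\mapsto\;\kappa\text{-component}.
\]
The last step projects along the span of the $M_{\mathfrak e},M_{\mathfrak f}$ lifts. The $\mathcal{B}_2$-component of $d_{\mathcal{K}}(s_\Delta)$ is zero, so the first block introduces no correction. The homology identifications are carried along by the chain homotopy equivalence.

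\textbf{Computing the matrix.} I will use the representative $s_\Delta=-\frac{[03]_\Delta[01]_\Delta}{[13]_\Delta}\,\partial_0\Delta\otimes\epsilon_{v_0(\Delta)}$ found in the proof of Proposition~\ref{prop:tetdet}.
\begin{enumerate}
\item $d_{\mathcal B}$ sends $s_\Delta$ to the three corner $1$-cells at vertex $0$, one in each of the faces $\hat f_1,\hat f_2,\hat f_3$ of $\Delta$ containing $v_0$.
\item For the face $\hat f$ with vertices $0,j,k$, invert $ev$ explicitly. The preimage is $f\otimes X_f$, where $X_f$ is the unique element of $\mathfrak{sl}_2$ whose vector field vanishes at $v_j,v_k$ and takes the prescribed value at $v_0$. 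Up to scale, $X_f=[v_j,v_k]\,\mathfrak h\,[v_j,v_k]^{-1}=M_{\mathfrak h}(e_{jk})$, and the scale is a ratio of brackets $[0j][0k]/[jk]$.
\item Apply $d_{\mathcal A}$, which sends $f\otimes X_f$ to the signed sum of its three edges tensored with $X_f$ (transported by Lemmas~\ref{lem:transportface} and~\ref{lem:transportedge}).
\item Read off the $M_{\mathfrak h}(e)$-coefficient for each edge. On the edge $jk$ this is the full scale. On the edges $0j$ and $0k$, expand $X_f$ in the basis $M_{\mathfrak e},M_{\mathfrak f},M_{\mathfrak h}$ of that edge and keep the $\mathfrak h$-coefficient. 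This can be computed as $\tfrac12\operatorname{tr}(X_f M_{\mathfrak h}(e))$, since $\operatorname{tr}(\mathfrak h^2)=2$ and $\mathfrak h$ is trace-orthogonal to $\mathfrak e,\mathfrak f$. This normalization is the source of the factor $\frac12$.
\end{enumerate}

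\textbf{Identifying the coefficients.} After summing over faces, each entry $(e,\Delta)$ should be a combination of bracket ratios, and hence of cross-ratios, indexed by the corners of $\Delta$ at $e$. By $PSL(2,\mathbb C)$-invariance I can normalize to $(\infty,0,1,z)$. The expected outcome is that the entry equals $\tfrac12$ times the derivative of the log-shape at that corner ($\log z$, $\log z'$ or $\log z''$) with respect to $\log\mathrm{CR}=\log z'$. Using $d\log z=(1-z)\,d\log z'$ and the analogous formulas for the other two shapes, the sum $G\cdot(\cdot)+G'\cdot(\cdot)+G''\cdot(\cdot)$ should reorganize, via $A=G-G'$ and $B=G''-G'$, into $\tfrac12(A\Delta_{z''}+B\Delta_z^{-1})$. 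The $G'$ terms are absorbed through $z\,z'\,z''=-1$.

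Independence of lifts follows from the Remark after Proposition~\ref{prop:tetdet}: the deck-transport factors $\chi_\rho(d_i)$ cancel between numerator and denominator, since $\kappa_e$ and $s_\Delta$ are invariant under rescaling of the lifts.

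\textbf{Main obstacle.} I expect the hardest part to be the bookkeeping in steps 3 and 4. This includes the incidence signs of $d_{\mathcal A}$ and $d_{\mathcal B}$, the orientation of each edge relative to the face, and the fact that one physical edge $e$ receives contributions from every tetrahedron around it through different lifts. I will first fix everything at a single tetrahedron in normalized coordinates, checking that one column gives the derivative of $(\log z,\log z',\log z'')$ at the six edges of $\Delta$. Only then will I assemble the columns with $G,G',G''$, leaving the global sign as the $\pm$ allowed in the statement.
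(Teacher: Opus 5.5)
Your route is the paper's. You cancel the three identity blocks by Lemma~\ref{lem:retraction}, leaving $\langle s_\Delta\rangle\to\langle\kappa_e\rangle$ with differential $d_{\mathcal A}\circ ev^{-1}\circ d_{\mathcal B}$. You then compute the column of $s_\Delta$ from the representative $-\frac{[03]_\Delta[01]_\Delta}{[13]_\Delta}\,\partial_0\Delta\otimes\epsilon_{v_0(\Delta)}$ by inverting $ev$ face by face with $M_{\mathfrak h}(e_{jk})$, exactly as in Lemma~\ref{lem:inverseev2}.

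Your explicit $\kappa$-projection on the edges $0j,0k$ is a step the paper leaves implicit, and it is correct. Since $\tfrac12\mathrm{tr}(M_{\mathfrak h}(e_{pq})M_{\mathfrak h}(e_{rs}))=1-2[pr][qs]/([pq][rs])$, the $f_{012}$ and $f_{013}$ terms give $\tfrac12(1-z^{-1})=\tfrac12 z''$ on $\kappa_{01}$. Similarly you get $-\tfrac12$ on $\kappa_{02}$ and $\tfrac12 z^{-1}$ on $\kappa_{03}$. The $\mathfrak e,\mathfrak f$ parts cancel in total, as they must, because the image lies in $\ker ev=\langle\kappa_e\rangle$.

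Two of your stated formulas are wrong, however, and the final identification would not go through as written.

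\textbf{Source of the factor $\tfrac12$.} It does not come from the trace normalization: for edges sharing a vertex the projections above are $\pm1$ (e.g.\ $-1$ for $e_{12},e_{01}$ and $+1$ for $e_{12},e_{02}$). It comes from the face inverse. We have $ev(f\otimes M_{\mathfrak h}(e_{12}(f)))=-\frac{2[02]_f[01]_f}{[12]_f}\,\partial_0 f\otimes\epsilon_{v_0(f)}$, so the preimage scale is $-[jk]/(2[0j][0k])$, not a bare bracket ratio. Your own step 4 already forces this: the $jk$-entries involve no projection, yet they must carry the $\tfrac12$.

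\textbf{The derivative formulas.} The formula $d\log z=(1-z)\,d\log z'$ is false. From $z'=1/(1-z)$ one gets $d\log z=-z''\,d\log z'$ and $d\log z''=-z^{-1}\,d\log z'$; these are the values $\langle d\log z,s_\Delta\rangle=-z''$ and $\langle d\log z'',s_\Delta\rangle=-z^{-1}$ used in Proposition~\ref{prop:pairing1}. With the correct values, the column of $s_\Delta$ is $-\tfrac12$ times the derivative of the edge log-holonomies with respect to $\log z'_\Delta$, so your expected outcome holds up to this global sign. Concretely, the matrix is $\tfrac12(G\Delta_{z''}-G'+G''\Delta_z^{-1})$. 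The $G'$ term is absorbed by $z''+z^{-1}=1$, the differentiated form of $zz'z''=-1$, which yields $\tfrac12(A\Delta_{z''}+B\Delta_z^{-1})$.
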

\begin{proof}
The description of $D$ follows by a routine diagram chase in the following commutative diagram:
$$\begin{tikzcd}
&&\mathbb{C}^{E}\arrow[d]\\
\mathcal{A}_{3} \arrow[r]\arrow[d] & \mathcal{A}_{2} \arrow[r]\arrow[d, "\cong"]& \mathcal{A}_{1}\arrow[d]\\
\mathcal{B}_{2} \arrow[r]\arrow[d]& \mathcal{B}_{1} \arrow[r]& \mathcal{B}_{0} \\
\mathbb{C}^{T}&&
\end{tikzcd}$$
with the vertical arrows in the second rows are evaluation map.
Now we compute the matrix representation of $D$. Lifting $s_{\Delta}$ to $\mathcal{B}_{2}$ as in the proof of Proposition \ref{prop:tetdet}, we obtain
$$s_{\Delta}=-\frac{[03]_{\Delta}[01]_{\Delta}}{[13]_{\Delta}}\partial_{0}\Delta\otimes \epsilon_{v_{0}(\Delta)},$$
which is then sent to
\begin{align}\label{eq:sdeltainB1}
-\frac{[03]_{\Delta}[01]_{\Delta}}{[13]_{\Delta}}(\partial_{0}f_{012}-\partial_{0}f_{013}+\partial_{0}f_{023})\otimes \epsilon_{v_{0}(\Delta)}\end{align}
in $\mathcal{B}_{1}$. To find its preimage under $ev_{2}$ in $\mathcal{A}_{2}$, we need the following lemma:
\begin{lemma}\label{lem:inverseev2}
Let $f\in F$ by any face. Then $\partial_{0}f\otimes \epsilon_{v_{0}(f)}$ is identified with 
$$-\frac{[12]_{f}}{2[02]_{f}[01]_{f}}f\otimes M_{\mathfrak h}(e_{12}(f)).$$
in $\mathcal{A}_{2}$.
\end{lemma}
\begin{proof}
Since $M_{\mathfrak h}(e_{12}(f))$ is the unique element in $\mathfrak{sl}_{2}(\mathbb{C})$ fixing vertices $v_{1}(f)$ and $v_{2}(f)$ up to a factor, we have 
$$ev(f\otimes M_{\mathfrak h}(e_{12}(f))=C(\partial_{0}f\otimes \epsilon_{v_{0}(f)})$$
for some scalar ${C}$. By the definition of the evaluation map, we have
\begin{align*}
C&=\det\begin{bmatrix}
\begin{bmatrix}
x_{1} & x_{2}\\
y_{1} & y_{2}
\end{bmatrix}
\begin{bmatrix}
1 & 0\\
0 & -1
\end{bmatrix}
\begin{bmatrix}
x_{1} & x_{2}\\
y_{1} & y_{2}
\end{bmatrix}^{-1}
\begin{bmatrix}
x_{0} \\
y_{0} 
\end{bmatrix},
\begin{bmatrix}
x_{0}\\
y_{0}
\end{bmatrix}
\end{bmatrix}\\
&=\frac{2(y_{2}x_{0}-x_{2}y_{0})(x_{1}y_{0}-y_{1}x_{0})}{x_{1}y_{2}-x_{2}y_{1}}=-\frac{2[02]_{f}[01]_{f}}{[12]_{f}}.
\end{align*}
Here we assumed
$$v_{0}=\begin{bmatrix}
x_{0}\\
y_{0}
\end{bmatrix}\quad v_{1}=\begin{bmatrix}
x_{1}\\
y_{1}
\end{bmatrix}\quad v_{2}=\begin{bmatrix}
x_{2}\\
y_{2}
\end{bmatrix}.$$
\end{proof}
Applying Lemma \ref{lem:inverseev2} to \eqref{eq:sdeltainB1}, we see that $s_{\Delta}$ is sent to
\begin{align}\label{eq:expandsdelta}
&\frac{[03]_{\Delta}[01]_{\Delta}}{[13]_{\Delta}}(\frac{[12]_{\Delta}f_{012}\otimes M_{\mathfrak h}(e_{12}(\Delta))}{2[02]_{\Delta}[01]_{\Delta}}-\frac{[13]_{\Delta}f_{013}\otimes M_{\mathfrak h}(e_{13}(\Delta))}{2[03]_{\Delta}[01]_{\Delta}}+\frac{[23]_{\Delta}f_{023}\otimes M_{\mathfrak f}(e_{23}(\Delta))}{2[02]_{\Delta}[03]_{\Delta}})\\
&=\frac{1}{2}\left(z^{-1}f_{012}\otimes M_{\mathfrak f}(e_{12}(\Delta))-f_{013}\otimes M_{\mathfrak h}(e_{13}(\Delta))+z''f_{023}\otimes M_{\mathfrak f}(e_{23}(\Delta))\right)
\end{align}
in $\mathcal{A}_{2}$. Composing with the boundary map of $\mathcal{A}_{\bullet}$, we obtain 
$$D(s_{\Delta})=\frac{1}{2}\left(z''(\kappa_{e_{01}(\Delta)}+\kappa_{e_{23}(\Delta)})-(\kappa_{e_{02}(\Delta)}+ \kappa_{e_{13}(\Delta)})+z^{-1}(\kappa_{e_{03}(\Delta)}+\kappa_{e_{12}(\Delta)})\right)$$
in $\mathcal{A}_{1}$. Using the edge-tetrahedron adjacency matrices, we find that the matrix representation of $D$ equals to
$$D|_{s_{\Delta}, \kappa_{e}}=\frac{1}{2}(G\Delta_{z''}-G'+G''\Delta_{z^{-1}})=\frac{1}{2}(A\Delta_{z''}+B\Delta_{z^{-1}})$$
\end{proof}

Since the chain complex $(\mathbb{C}^{T}\xrightarrow{D}\mathbb{C}^{E}\to 0)$ is homotopy equivalent to $\mathcal{K}_{\bullet}$, their homology groups are equal. In our incomplete setting, these homology groups are natually isomorphic to those of $\mathcal{A}_{\bullet}$. In the next lemmas, we describe the perfect pairings between
\begin{enumerate}
    \item $\ker D\cong H_{2}(\mathcal{A}_{\bullet})$ and $H_{1}(M; \mathrm{Ad}\rho)$;
    \item $\mathrm{coker}(D)\cong H_{1}(\mathcal{A}_{\bullet})$ and $H_{2}(M; \mathrm{Ad}\rho)$. 
\end{enumerate}
We first describe Porti's invariant vector $I_{v}$ at vertex $v$.
\begin{proposition}
Let $v$ be a vertex of the image of $\widetilde{M}$ in $\mathbb{H}^{3}$, and $\rho: \pi_{1}(M)\to PSL(2, \mathbb{C})$ be a character satisfying the condition of Theorem \ref{thm:mainincomplete}. Then there exists a vector $v'\in \mathbb C^{2}$ with $[v]\ne [v']$ in $\mathbb CP^{1}$, such that
\begin{equation}\label{eq:definitionIv}
\mathbf I_{v}=\frac{1}{2}[v, v']H[v, v']^{-1}
\end{equation}
up to multiplication by a scalar.
\end{proposition}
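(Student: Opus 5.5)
Let $\Gamma_v\subset\pi_1(M)$ be the stabilizer of the cusp at $v$, a peripheral subgroup isomorphic to $\pi_1(T_i)\cong\mathbb Z^2$. The plan is to show that $\rho(\Gamma_v)$ is a nontrivial abelian group of diagonalizable elements with common eigenvector $v$. Then $\mathbf I_v$ is the infinitesimal generator of the one-parameter diagonal subgroup containing it, and conjugating to the standard frame gives \eqref{eq:definitionIv}.

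First, every element of $\rho(\Gamma_v)$ fixes $[v]\in\mathbb CP^1$. This holds because the developing map $D_{\boldsymbol z}$ is $\rho$-equivariant and $\Gamma_v$ preserves the lifted cusp. Hence every lift of $\rho(\gamma)$, $\gamma\in\Gamma_v$, has $v$ as an eigenvector, with eigenvalue $\pm\chi_\rho(\gamma)$; this is the character used in Proposition \ref{prop:acyclicB}. By hypothesis $\chi_\rho$ is nontrivial on $\pi_1(T_i)$, so we may pick $\gamma_0\in\Gamma_v$ with $\chi_\rho(\gamma_0)^2\neq 1$. Then $\rho(\gamma_0)$ has two distinct eigenvalues $\chi_\rho(\gamma_0)^{\pm1}$. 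Let $v'$ be an eigenvector for the second eigenvalue; then $[v']\neq[v]$.

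Next, every other element $\rho(\gamma)$ with $\gamma\in\Gamma_v$ commutes with $\rho(\gamma_0)$ in $PSL(2,\mathbb C)$, since $\Gamma_v$ is abelian. So $\rho(\gamma)$ permutes the two fixed points $[v],[v']$ of $\rho(\gamma_0)$. It already fixes $[v]$, so it also fixes $[v']$. Thus in the frame $P=[v,v']$ every $P^{-1}\rho(\gamma)P$ is diagonal.

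Finally, with respect to the adjoint action, $\mathrm{Ad}_{\rho(\gamma)}$ fixes $PHP^{-1}$, because a diagonal matrix commutes with $\mathfrak h$. The fixed space of $\mathrm{Ad}_{\rho(\gamma_0)}$ is exactly one-dimensional: its eigenvalues on $P\mathfrak eP^{-1}$, $P\mathfrak fP^{-1}$, $P\mathfrak hP^{-1}$ are $\chi^{2},\chi^{-2},1$ with $\chi^2\neq1$. So, up to scalar, $PHP^{-1}$ is the unique invariant vector of $\mathrm{Ad}\rho(\pi_1(T_i))$. This is Porti's $\mathbf I_i$ from Theorem \ref{HM}, after identifying $\mathbb C^3$ with $\mathfrak{sl}_2(\mathbb C)$ via the trace form; that identification turns $\mathrm{Ad}^T$-invariance into $\mathrm{Ad}$-invariance, since the form is invariant. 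Taking the scalar $\tfrac12$ gives \eqref{eq:definitionIv}.

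The only delicate step is this identification with Porti's convention for $\mathbf I_i$, stated via the transpose, and checking that the sign ambiguity from $PSL$ versus $SL$ does not matter. Both issues disappear because the statement is only up to scalar.
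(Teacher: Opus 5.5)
Your argument is correct and is essentially the paper's proof: pick a peripheral element whose lift has distinct eigenvalues, let $v'$ be its other fixed point, and conjugate by $[v,v']$, which forces the invariant vector to be a multiple of $H$. Your extra steps make explicit what the paper's terser argument leaves implicit: you choose $\gamma_0$ with $\chi_\rho(\gamma_0)^2\neq 1$ rather than assuming every peripheral curve acts hyperbolically, and you use commutativity of the peripheral group to show that $[v,v']H[v,v']^{-1}$ is invariant under all of it.
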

\begin{proof}
Let $\gamma$ be a simple closed curve on the boundary component corresponding to the vertex $v$. Then by the assumption of $\rho$, $\rho([\gamma])$ is hyperbolic and hence fix two boundary points of $\mathbb H^{3}$, one of them must be $v$. Suppose $v'$ is the other fixed point. Conjugating by the matrix $[v, v']$ sends the two boundary points $v$ and $v'$ to $0$ and $\infty$ in $\partial \mathbb H^{3}$. After this conjugation, the invariant vector must be a multiple of $H\in \mathfrak{sl}_{2}(\mathbb C)$. The proposition follow.
\end{proof}
In the rest of the section, we will take \eqref{eq:definitionIv} as the definition of $\mathbf I_{v}$. Recall that we choose the bilinear form
$$B(X, Y):=\mathrm{tr}(XY),$$
of $\mathfrak{sl}_{2}(\mathbb C)$ for the definition of the twisted intersection pairing between homology groups $H_{\bullet}(M, \partial M; \mathrm{Ad}\rho)$ and $H_{3-\bullet}(M; \mathrm{Ad}\rho)$.

\begin{proposition}\label{prop:pairing1}
Given $x\in \ker(D)$, lifting $x$ to $\mathcal{B}_{2}$ induces a tangent vector at $\rho$ in the deformation space $\mathcal{M}(\mathcal{T})$. The natural bilinear pairing between $\ker D\cong H_{2}(\mathcal{A}_{\bullet})$ and $H_{1}(M;\mathrm{Ad}\rho)$ is
$$[x, \gamma\times \mathbf I_{v}]=\frac{1}{2}dH(\gamma)(x)$$
where $\gamma$ is a peripheral curve at vertex $v$ and $I_{v}$ is the invariant vector associated to vertex $v$.
\end{proposition}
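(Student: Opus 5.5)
The plan is to make the isomorphism $\ker D\cong H_{2}(\mathcal{K}_{\bullet})\cong H_{2}(\mathcal{A}_{\bullet})$ completely explicit, and then to compute the twisted intersection pairing face by face along a representative of $\gamma$ pushed slightly into $M$.

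\textbf{Step 1: identifying $x$ with a cycle.} Given $x=\sum_{\Delta}x_{\Delta}s_{\Delta}\in\ker D$, I undo the retraction of Lemma \ref{lem:retraction}. The element $b=\sum_{\Delta}x_{\Delta}s_{\Delta}$ lies in $\mathcal{B}_{2}$, with each $s_{\Delta}$ realized as $-\frac{[03]_{\Delta}[01]_{\Delta}}{[13]_{\Delta}}\partial_{0}\Delta\otimes\epsilon_{v_{0}(\Delta)}$ as in the proof of Proposition \ref{prop:matrixK}. Set $a:=-ev_{2}^{-1}(d_{\mathcal{B}}b)\in\mathcal{A}_{2}$, computed by Lemma \ref{lem:inverseev2}; this is the expression \eqref{eq:expandsdelta}, up to sign. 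Then $(a,b)$ is a cycle in $\mathcal{K}_{2}$ modulo corrections in $\mathcal{A}_{1}$ that vanish because $D(x)=0$. Its image in $H_{2}(\mathcal{A}_{\bullet})$ is the class of $a$, which is a relative cycle.

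\textbf{Step 2: the geometric meaning of $x$.} By \eqref{eq:defineS} we have $d\log\mathrm{CR}(s_{\Delta})=1$, and $\mathrm{CR}$ equals the shape parameter $z'_{\Delta}=1/(1-z_{\Delta})$ up to the convention fixed in Proposition \ref{prop:crossratio}. Hence $x$ prescribes the tangent vector $d\log z'_{\Delta}=x_{\Delta}$, and therefore $d\log z_{\Delta}$ and $d\log z''_{\Delta}$ as well. The matrix of $D$ is $\frac12(A\Delta_{z''}+B\Delta_{z}^{-1})$, which is the linearization of the edge equations written in these variables. So $D(x)=0$ says exactly that $x$ is a tangent vector to $\mathcal{V}(\mathcal{T})$. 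Differentiating $H(\gamma)=\sum_{\Delta}G_{\gamma\Delta}\log z_{\Delta}+G'_{\gamma\Delta}\log z'_{\Delta}+G''_{\gamma\Delta}\log z''_{\Delta}$ then expresses $dH(\gamma)(x)$ as a sum of corner contributions along $\gamma$.

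\textbf{Step 3: the pairing as a sum over corners.} Isotope $\gamma$ into a normal curve in the cusp triangulation, pushed slightly into $M$. It crosses a sequence of faces $f$ near the ideal vertex $v$, so the twisted intersection pairing is
$$[a,\gamma\otimes\mathbf I_{v}]=\sum_{f\cap\gamma}\pm\,\mathrm{tr}\bigl(a_{f}\,\mathbf I_{v}\bigr),$$
with $\mathbf I_{v}$ transported to each face by the appropriate deck transformations. Each coefficient $a_{f}$ is a combination of terms $\frac{[\cdot]}{[\cdot][\cdot]}M_{\mathfrak h}(e)$, one for each edge $e$ of $f$, by Lemma \ref{lem:inverseev2}. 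I then compute the local trace $\mathrm{tr}\bigl(M_{\mathfrak h}(e_{12}(f))\cdot\frac12[v,v']\mathfrak h[v,v']^{-1}\bigr)$ explicitly. It is a cross-ratio of $v_{1},v_{2},v,v'$.
\begin{enumerate}
\item When $v$ is a vertex of $e$, the term reduces to a constant ($\pm1$ after normalization).
\item When $v$ is not a vertex of $e$, the term involves $v'$.
\end{enumerate}
I expect the terms involving $v'$ to telescope away, or to cancel between the two tetrahedra sharing each face, when summed around the closed curve $\gamma$, since $v'$ is common to the whole cusp. What survives should be, corner by corner, $\frac12$ times the weights $G_{\gamma},G'_{\gamma},G''_{\gamma}$ multiplied by $d\log z$, $d\log z'$ or $d\log z''$. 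This sum is $\frac12\,dH(\gamma)(x)$.

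\textbf{Expected main obstacle.} The hard part is the trace computation in Step 3 together with the bookkeeping of signs and orientations. One must check that the $v'$-dependent pieces genuinely cancel around $\gamma$, which should use the fact that $a$ is a cycle, i.e.\ $D(x)=0$. One must also check that the left/right corner convention defining $G_{\gamma}$ matches the intersection signs. The $\pm$ from the $PSL$ lifts in \eqref{eqn:transporting} should cancel in pairs, because $\mathbf I_{v}$ is quadratic in the lift. I would first verify the whole computation on a single cusp triangle, normalizing $v=\infty$ and $v'=0$ so that $\mathbf I_{v}=\frac12\mathfrak h$. In that normalization the traces become elementary, and the general case follows by $PSL(2,\mathbb C)$-invariance of $\mathrm{tr}$.
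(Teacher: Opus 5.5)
Your route is the paper's. Steps 1--2 reproduce its setup: the relative cycle is read off from \eqref{eq:expandsdelta}, and $dH(\gamma)(x)$ is expanded over the corners cut by $\gamma$ using $\langle d\log z',s_\Delta\rangle=1$, $\langle d\log z,s_\Delta\rangle=-z''$ and $\langle d\log z'',s_\Delta\rangle=-z^{-1}$. Step 3 is its face-crossing trace sum.

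\textbf{The gap.} Step 3 is the entire content, and you leave it as an expectation whose mechanism is not the one that works. The $v'$-dependent traces do not cancel between the two tetrahedra at a face, and no global argument using $D(x)=0$ is involved. Grouped crossing by crossing in a fixed normalization $v=\infty$, $v'=p$, the $p$-part at a crossing is, up to sign, $p$ times the jump of the $\mathcal B_2$-coefficient of $x$ across the corresponding cusp edge. Going once around $\gamma$, these jumps sum to $(\lambda-1)\beta$. Here $\beta$ is the coefficient on the starting cusp triangle and $\lambda$ is the multiplier of $\rho(\gamma)$ on $T_v\mathbb{C}P^{1}$, which in general is $\neq1$ in the setting of Theorem~\ref{thm:mainincomplete}. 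So telescoping does not dispose of $v'$ on its own.

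\textbf{The missing idea.} The identity is local: it holds for each $s_\Delta$ separately and for each normal arc of $\gamma$ inside a single tetrahedron, for every choice of $v'\neq v$.
\begin{enumerate}
\item Changing $v'$ changes $\mathbf I_v$ by a multiple of the nilpotent $w\mapsto\det(w,v)\,v$. Hence $\mathrm{tr}(X\mathbf I_v)$ depends on $v'$ only through $\det(Xv,v)$, i.e.\ through the value $X|_v$.
\item In \eqref{eq:expandsdelta}, $s_\Delta$ maps to terms on $f_{012},f_{013},f_{023}$ only. Their coefficients are proportional to $M_{\mathfrak h}(e_{12}),M_{\mathfrak h}(e_{13}),M_{\mathfrak h}(e_{23})$: one edge per face, namely the edge opposite $v_0$, not one term per edge of $f$.
\item At $v_1,v_2,v_3$ every relevant edge contains the vertex, so the traces are the constants $\pm1$.
\item At $v_0$ the three terms take the same value up to the signs in $\partial_0 f_{012}-\partial_0 f_{013}+\partial_0 f_{023}$. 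This is because they are $ev_2^{-1}$ of $d_{\mathcal B}$ applied to a vector supported on the single corner $\partial_0\Delta$. Hence the $v'$-parts at the entry and exit faces of an arc cancel.
\end{enumerate}
A direct check then gives the twelve arc contributions $-z''/2$, $1/2$ and $-1/(2z)$ according to the corner type cut. For example, for the arc at $v_0$ through $f_{013}$ and $f_{012}$ one gets $\tfrac12\mathrm{tr}(M_{\mathfrak h}(e_{13})\mathbf I_{v_0})-\tfrac{1}{2z}\mathrm{tr}(M_{\mathfrak h}(e_{12})\mathbf I_{v_0})=-\tfrac{z''}{2}$. These values are exactly half the pairings of $d\log z$, $d\log z'$, $d\log z''$ with $s_\Delta$. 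Summing with the same corner counts that define $dH(\gamma)$ proves the claim as a chain-level identity. $D(x)=0$ is needed only so that the left side is a pairing on homology.

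Your plan to test on a single normalized cusp triangle would uncover this. As written, though, the proposal looks for the cancellation in the wrong place.
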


\begin{proof}
For a tetrahedron $\Delta\in T$ with shape parameter $z$, $\partial \Delta$ consists four triangles at vertices $v_{0}(\Delta)$, $v_{1}(\Delta)$, $v_{2}(\Delta)$ and $v_{3}(\Delta)$ as follows:
$$\begin{array}{cccc}
\begin{tikzpicture}
\draw[thick] (0,0) node[xshift=10, yshift=6]{$z$}node[left]{$1$}-- (2,0)node[xshift=-10, yshift=6]{$z'$}node[right]{$2$} -- (1,{2*sin(60)})node[yshift=-12]{$z''$}node[above]{$3$} -- cycle;
\end{tikzpicture}& \begin{tikzpicture}
\draw[thick] (0,0) node[xshift=10, yshift=6]{$z$}node[left]{$0$}-- (2,0)node[xshift=-10, yshift=6]{$z'$}node[right]{$3$} -- (1,{2*sin(60)})node[yshift=-12]{$z''$}node[above]{$2$} -- cycle;
\end{tikzpicture}&\begin{tikzpicture}
\draw[thick] (0,0) node[xshift=10, yshift=6]{$z$}node[left]{$3$}-- (2,0)node[xshift=-10, yshift=6]{$z'$}node[right]{$0$} -- (1,{2*sin(60)})node[yshift=-12]{$z''$}node[above]{$1$} -- cycle;
\end{tikzpicture}& \begin{tikzpicture}
\draw[thick] (0,0) node[xshift=10, yshift=6]{$z$}node[left]{$2$}-- (2,0)node[xshift=-10, yshift=6]{$z'$}node[right]{$0$} -- (1,{2*sin(60)})node[yshift=-12]{$z''$}node[above]{$1$} -- cycle;
\end{tikzpicture}\\
\end{array}
$$

Let $\sum{x_{\Delta}}s_{\Delta}\in \ker D$, and put a curve $\gamma$ in normal position with respect to the induced triangulation on $\partial M$. The log-holonomy of $\gamma$ is obtained by summing the logarithmic shape parameters of the corners cut off by $\gamma$, with signs determined by the orientation of $\gamma$. See \cite{NZ85,DG13}. As a consequence,
\begin{equation}\label{eqn:cornerpairing}
\begin{aligned}
\langle dH_{\gamma}(\boldsymbol x), \sum_{\Delta\in T}x_{\Delta}s_{\Delta}\rangle&=\langle\sum_{\Delta\in T}\sum_{i=0}^{3}(C_{\Delta, i}d\log z_{\Delta}+C_{\Delta, i}'d\log{z_{\Delta}'}+C_{\Delta, i}''d\log z_{\Delta}'), \sum_{\Delta\in T}x_{\Delta}s_{\Delta}\rangle\\
&=\sum_{\Delta \in T}x_{\Delta}\sum_{i=0}^{3}(C_{\Delta, i}-C_{\Delta, i}')z_{\Delta}''+(C_{\Delta, i}''-C_{\Delta, i}')z_{\Delta}^{-1}.
\end{aligned}
\end{equation}
where $C_{\Delta, i}$, $C_{\Delta, i}'$ and $C_{\Delta, i}''$ count the occurence of the following configuratison at the $i$-the vertex of $\Delta$, respectively:
$$
\begin{array}{ccc}
\begin{tikzpicture}
\draw[thick] (0,0) node[xshift=10, yshift=6]{$z$}-- (2,0)node[xshift=-10, yshift=6]{$z'$} -- (1,{2*sin(60)})node[yshift=-12]{$z''$} -- cycle;
 \draw[thick, red, ->](0, 1)node[left]{$\gamma$} to[bend left] (1.5, -0.5);
\end{tikzpicture}  & \begin{tikzpicture}
\draw[thick] (0,0) node[xshift=10, yshift=6]{$z'$}-- (2,0)node[xshift=-10, yshift=6]{$z''$} -- (1,{2*sin(60)})node[yshift=-12]{$z$} -- cycle;
 \draw[thick, red, ->](0, 1)node[left]{$\gamma$} to[bend left] (1.5, -0.5);
\end{tikzpicture}
&
\begin{tikzpicture}
\draw[thick] (0,0) node[xshift=10, yshift=6]{$z''$}-- (2,0)node[xshift=-10, yshift=6]{$z$} -- (1,{2*sin(60)})node[yshift=-12]{$z'$} -- cycle;
 \draw[thick, red, ->](0, 1)node[left]{$\gamma$} to[bend left] (1.5, -0.5);
\end{tikzpicture}\\
\end{array}
$$
 Configurations with the orientation of $\gamma$ reversed contribute $(-1)$ to $C_{\Delta, i}$, $C_{\Delta, i}'$ and $C_{\Delta, i}''$ respectively. The second equality in \eqref{eqn:cornerpairing} follows from the direct computation
$$\langle d\log z_{\Delta}, s_{\Delta} \rangle= \frac{\partial \log z_{\Delta}}{\partial \log z_{\Delta}'}=-z'',$$
$$\langle d\log z_{\Delta}', s_{\Delta} \rangle= \frac{\partial \log z_{\Delta}'}{\partial \log z_{\Delta}'}=1,$$
$$\langle d\log z_{\Delta}'', s_{\Delta} \rangle= \frac{\partial \log z_{\Delta}''}{\partial \log z_{\Delta}'}=-z^{-1}.$$
To prove the proposition, we show the right-hand side of \eqref{eqn:cornerpairing} also equals $[x, \gamma\otimes I_{v}]$. By equation \eqref{eq:expandsdelta}
$$[x, \gamma\times I]=\sum_{\Delta\in T}\sum_{jk\in \{12, 31, 23\}}i(\gamma, f_{0jk}(\Delta))B(\mathbf I_{v}, M_{\mathfrak f}(e_{jk}(\Delta))).$$
The summand could be written in terms of all $12$ configurations, i.e.,
$$\sum_{jk\in \{12, 31, 23\}}i(\gamma, f_{0jk}(\Delta))B(I_{v}, M_{\mathfrak f}(e_{jk}(\Delta)))=\sum_{i=0}^{3}C_{\Delta, i}y_{\Delta, i}+C_{\Delta, i}'y_{\Delta, i}'+C_{\Delta, i}''y_{\Delta, i}''$$
where $y_{\Delta, i}, y_{\Delta, i}'$ and $y_{\Delta, i}''$ compute the contribution of a normal arc at vertex $i$, cutting the corner of $\partial_{i}\Delta$ labeled by $z$, $z'$ and $z''$ respectively. It is straightforward to check
$$y_{\Delta, i}=-\frac{z''}{2}, \quad y_{\Delta, i}'=\frac{1}{2}, \quad y_{\Delta, i}''=-\frac{1}{2z}.$$
We compute $y_{\Delta, 0}$ below as an example, the computation for the  other $11$ coefficients are identical. In this case, the normal arc enters through face $f_{013}(\Delta)$ and exits through face $f_{012}(\Delta)$. Hence
\begin{align}
y_{\Delta, 0}=\frac{1}{2}B(\mathbf I_{v_{0}(\Delta)}, M_{13})-\frac{z^{-1}}{2}B(\mathbf I_{v_{0}(\Delta)}, M_{12})=-\frac{z''}{2}.
\end{align}
\end{proof}

\begin{proposition}\label{prop:pairing2}
Given $y\in \mathrm{coker}(D)$, choose any lift $\widetilde{y}$ of $y$ in $\mathbb C^{E}$. Then the natural bilinear pairing between $\mathrm{coker}(D)\cong H_{1}(\mathcal{A}_{\bullet})$ and $H_{2}(M, \rho)$ satisfies
\begin{equation}\label{eq:pairing2}
[y, [T_{v}]\otimes \mathbf I_{v}]=(C^{T}\tilde{y})_{v}\end{equation}
where $[T_{v}]$ is the fundamental class of $\partial_{v} M$, $I_{v}$ is the invariant vector associated to vertex $v$, and $C$ is the edge-vertex adjacency matrix.
\end{proposition}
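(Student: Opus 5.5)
The plan is to compute the pairing at chain level. We represent $y$ by a cycle in $\mathcal{A}_1$, pair it with an explicit cycle in the cell complex dual to $\mathcal{T}$ representing $[T_v]\otimes\mathbf I_v$, and read off the answer edge by edge.

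\textbf{Step 1: a representative of $y$ in $\mathcal{A}_1$.} In the retraction of Lemma \ref{lem:retraction} the surviving degree-one generators are the vectors $\kappa_e$. By Proposition \ref{prop:edgedet}, $\kappa_e$ is a multiple of $e\otimes M_{\mathfrak h}(e)$, the kernel of $ev$ on the summand of edge $e$. The chain homotopy equivalence therefore sends $\tilde y=\sum_e \tilde y_e\kappa_e\in\mathbb C^E$ to the chain $\sum_e\tilde y_e\,\kappa_e\in\mathcal{A}_1$, viewed inside $\mathcal{K}_1$. This chain is a cycle, since $\mathcal{A}_0=0$ for the relative complex. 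Because $ev(\kappa_e)=0$, it is also a cycle in $\mathcal{K}_\bullet$. Under \eqref{eq:homologyKA} it represents the class in $H_1(\mathcal{A}_\bullet)$ corresponding to $y$. I will fix the normalization $\kappa_e=\frac12 e\otimes M_{\mathfrak h}(e)$, where $M_{\mathfrak h}(e)$ is built from $[v_0(e),v_1(e)]$. This is the normalization implicit in the formula $D(s_\Delta)$ of Proposition \ref{prop:matrixK}, up to the convention of that proof.

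\textbf{Step 2: a cycle representing $[T_v]\otimes\mathbf I_v$ and the pairing with $\kappa_e$.} Push the torus $T_v$ slightly into $M$, so that it becomes transverse to the ideal edges. The cell structure induced by the cusp triangulation, with each 2-cell tensored with $\mathbf I_v$ transported through the fundamental domain $D_v$, is a cycle representing $[T_v]\otimes\mathbf I_v$ in $H_2(M;\mathrm{Ad}\rho)$. The twisted intersection pairing of this 2-cycle with the 1-chain $e\otimes X$ is a sum over the ends of $e$ lying at cusp $v$. Each such end contributes $\pm B(\mathbf I_{v_i(e)},X)$, where $\mathbf I$ is transported to the developed endpoint $v_i(e)$. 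The representation ambiguity disappears because $\mathbf I_v$ is $\rho(\pi_1 T_v)$-invariant.

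Next I compute $B(\mathbf I_{v_i(e)},\tfrac12 M_{\mathfrak h}(e))$ using \eqref{eq:definitionIv}. The key observation is the following. If $\mathbf I_{v_0}=\frac12[v_0,v']H[v_0,v']^{-1}$ and $M_{\mathfrak h}(e)=[v_0,v_1]H[v_0,v_1]^{-1}$, then the two matrices are conjugate to elements of diagonal form sharing the eigenvector $v_0$ with eigenvalue $+1$. One therefore expects $\mathrm{tr}(\mathbf I_{v_0}M_{\mathfrak h}(e))$ to equal $\frac12\mathrm{tr}(H^2)=1$ modulo an upper-triangular correction. Indeed, in the basis $(v_0,\ast)$ both matrices are upper triangular with diagonal $(1,-1)$, which gives trace $2$. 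With the factor $\tfrac12$ in $\mathbf I_v$, the contribution is $1$. At the other end, $M_{\mathfrak h}$ has eigenvalue $-1$ on $v_1$, and the orientation sign of the intersection flips as well, so the contributions agree. Hence each end of $e$ at cusp $v$ contributes $+1$, with the overall sign fixed once.

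\textbf{Step 3: summation.} Summing over edges gives $[y,[T_v]\otimes\mathbf I_v]=\sum_e\tilde y_e\,C_{ev}=(C^T\tilde y)_v$, where $C_{ev}\in\{0,1,2\}$ counts the ends of $e$ at $v$. This is the definition of the edge--vertex adjacency matrix. Independence of the lift $\tilde y$ is automatic, because the pairing is well defined on homology. As a consistency check, $C^TD=0$: each column of $D$ is a combination of $\kappa$'s over opposite edge pairs, whose vertex counts are constant within the tetrahedron, weighted by $z''-1+z^{-1}=0$.

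\textbf{Main obstacle.} The hard part is the sign and normalization bookkeeping of the twisted intersection pairing in Step 2. One has to show that both endpoints contribute with the \emph{same} sign: the eigenvalue flip $\pm1$ and the orientation flip must cancel rather than add. One also has to match the normalization of $\kappa_e$ with the one used in Proposition \ref{prop:matrixK}. I would first settle this in the model where $v_0=\infty$ and $v_1=0$, with $\mathbf I=\frac12 H$, and then invoke $PSL(2,\mathbb C)$-invariance of $B$.
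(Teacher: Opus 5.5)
Your route is the paper's route. You reduce to $y=\kappa_e$, using that every chain in $\mathcal{A}_1$ is a cycle. You pair with $[T_v]\otimes\mathbf I_v$ through the intersections of $e$ with the lifts of the peripheral torus. Each end of $e$ at $v$ then contributes the intersection sign times $\mathrm{tr}(\mathbf I_v M_{\mathfrak h}(e))$, with the eigenvalue of $M_{\mathfrak h}(e)$ at that endpoint cancelling the orientation sign, and summing gives $C_{ev}$. Your check $C^TD=0$ is a useful addition the paper does not make: it shows at chain level that the right-hand side does not depend on the lift $\tilde y$.

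There is, however, a factor-of-two inconsistency in exactly the normalization you flagged as the main obstacle.

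\textbf{The computation does not match your declared $\kappa_e$.} You set $\kappa_e=\tfrac12\,e\otimes M_{\mathfrak h}(e)$, but the value $1$ you compute is $\mathrm{tr}(\mathbf I_v M_{\mathfrak h}(e))$. With your declared $\kappa_e$, each end actually contributes $B(\mathbf I_v,\tfrac12 M_{\mathfrak h}(e))=\pm\tfrac12$. You would then obtain $\tfrac12(C^T\tilde y)_v$. That would force $R=2C(C^TC)^{-1}$ in the proof of Theorem \ref{thm:mainincomplete} and introduce a spurious factor $2^{|V|}$.

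\textbf{The $\tfrac12$ is not implicit in Proposition \ref{prop:matrixK}.} The $\tfrac12$ in $D$ comes from Lemma \ref{lem:inverseev2}. The $\kappa$-component of $e'\otimes X$ in the geometric basis is $\tfrac12\mathrm{tr}(XM_{\mathfrak h}(e'))$, which is $\pm1$ for $X=M_{\mathfrak h}(e)$ when $e,e'$ share a vertex. Concretely, with $\kappa_e=e\otimes M_{\mathfrak h}(e)$, the chain $\partial(f_{012}\otimes M_{\mathfrak h}(e_{12}))$ projects to $\kappa_{12}-\kappa_{02}-\kappa_{01}$. Summing the three face terms of $s_\Delta$ then reproduces the paper's $D(s_\Delta)$ exactly.

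\textbf{Fix.} The consistent normalization is $\kappa_e=e\otimes M_{\mathfrak h}(e)$, which is also independent of the lift and of the orientation of $e$, and is the one the paper's proof uses. With it, your Step 2 computation is correct as written and the argument is complete.
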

\begin{proof}
Since all elements in $\mathcal{A}_{1}$ are cycles. It suffices to verify the equation for $y=\kappa_{e}$. Suppose $I_{v}$ has two eigenvectors $v_{0}$ and $v_{1}$, with $v_{1}=v$. We compute
$$
[\kappa_{e}, [T_{v}]\otimes \mathbf I_{v}]=\sum_{g\in \pi_{1}(M)}i(e, g(T_{v}))B(\begin{bmatrix}
v_{0}(e), v_{1}(e)
\end{bmatrix}\begin{bmatrix}
1, 0\\
0, -1
\end{bmatrix}\begin{bmatrix}
v_{0}(e), v_{1}(e)
\end{bmatrix}^{-1}, \begin{bmatrix}
gv_{0}, gv_{1}
\end{bmatrix}\frac{1}{2}\begin{bmatrix}
1, 0\\
0, -1
\end{bmatrix}\begin{bmatrix}
gv_{0}, gv_{1}
\end{bmatrix}^{-1}),$$
where $i(-, -)$ is the algebraic intersection number of two cycles.
We prove Equation \eqref{eq:pairing2} case by case:
\begin{enumerate}
 \item   If $e$ and $T_{v}$ do not intersect in $M$, the equation is satisfied trivially.
 \item   If $e$ and $T_{v}$ intersect at $v_{0}(e)$ in $M$, then we have the algebraic intersection number equals $1$, and $v_{1}=v_{0}(e)+Cv_{1}(e)$ for some constant $C$. By direct computation, the Bilinear form evaluates to $1$. Hence the equation is satisfied.
 \item   If $e$ and $T_{v}$ intersect at $v_{1}(e)$ in $M$, then we have the algebraic intersection number equals $-1$, and $v_{0}=v_{1}(e)+Cv_{0}(e)$. By direct computation, the Bilinear form evaluates to $-1$. Hence the equation is satisfied.
 \item   If $e$ and $T_{v}$ intersect twice in $M$, then $e$ intersects $T_{v}$ at $v_{0}$ and intersects $gT_{v}$ at $v_{1}$ for some $g$ in the universal cover. By the same computation, each summand contributes $1$, and the equation follows.
\end{enumerate}
\end{proof}

Since $C$ is the edge-vertex adjacency matrix. We have the following corollary immediately.
\begin{corollary}
Let $y_{1}, \ldots, y_{|V|}$ be the basis dual to Porti's basis $\boldsymbol{h}_{M}^{2}$ of $H_{2}(M; \mathrm{Ad}\rho)$. Then for any of their lift $\widetilde{y}_{1},\ldots,\widetilde{y}_{|V|}$ we have
$$C^{T}(\widetilde{y}_{1},\ldots,\widetilde{y}_{|V|})=I,$$
where $C$ is the edge-vertex adjacency matrix.
\end{corollary}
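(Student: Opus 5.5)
The corollary follows from Proposition \ref{prop:pairing2}, applied to each dual basis element, together with the definition of the dual basis. Fix $j\in\{1,\ldots,|V|\}$ and let $y_{j}\in\mathrm{coker}(D)$ be the element of the basis dual to Porti's basis $\boldsymbol h_{M}^{2}=\{[T_{1}]\otimes\mathbf I_{1},\ldots,[T_{|V|}]\otimes\mathbf I_{|V|}\}$. Here $\mathrm{coker}(D)$ is identified with $H_{1}(\mathcal{A}_{\bullet})$ via the homotopy equivalence of Lemma \ref{lem:retraction} and the isomorphism \eqref{eq:homologyKA}. By definition of the dual basis with respect to the pairing \eqref{eqn:poincare},
$$[y_{j},[T_{v}]\otimes\mathbf I_{v}]=\delta_{jv}\quad\text{for all } v\in V.$$

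Next choose an arbitrary lift $\widetilde y_{j}\in\mathbb C^{E}$ of $y_{j}$. Proposition \ref{prop:pairing2} gives $[y_{j},[T_{v}]\otimes\mathbf I_{v}]=(C^{T}\widetilde y_{j})_{v}$. Comparing the two expressions gives $(C^{T}\widetilde y_{j})_{v}=\delta_{jv}$. Collecting the columns $\widetilde y_{1},\ldots,\widetilde y_{|V|}$ into an $|E|\times|V|$ matrix then yields $C^{T}(\widetilde y_{1},\ldots,\widetilde y_{|V|})=I$.

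The one point to check is that the result does not depend on the lift. Proposition \ref{prop:pairing2} already asserts that the formula holds for any lift. Equivalently, $C^{T}D=0$, since each element of $\mathrm{im}(D)$ represents zero in homology and so pairs to zero with $H_{2}(M;\mathrm{Ad}\rho)$. So changing $\widetilde y_{j}$ by an element of $\mathrm{im}(D)$ does not change $C^{T}\widetilde y_{j}$. This argument uses no computation beyond what Proposition \ref{prop:pairing2} already provides.

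The only potential obstacle is the normalization of $\mathbf I_{v}$. The identity requires that the $\mathbf I_{v}$ in Porti's basis be exactly the one fixed in \eqref{eq:definitionIv}, including the factor $\tfrac12$, and that the pairing be computed with $B(X,Y)=\mathrm{tr}(XY)$. Since the paper adopts \eqref{eq:definitionIv} as the definition of $\mathbf I_{v}$, both Proposition \ref{prop:pairing2} and the dual basis use the same vectors, and the identity holds on the nose.
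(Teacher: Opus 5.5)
Your proposal is correct and follows the paper's argument exactly: the paper reads the corollary off Proposition \ref{prop:pairing2} by evaluating $(C^{T}\widetilde y_{j})_{v}=[y_{j},[T_{v}]\otimes\mathbf I_{v}]=\delta_{jv}$ on the dual basis. Your remark that independence of the lift amounts to $C^{T}D=0$ is also right, and it can be checked directly: each corner of a tetrahedron at a cusp $v$ contributes equally to $C^{T}G$, $C^{T}G'$ and $C^{T}G''$, so $C^{T}A=C^{T}B=0$.
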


The following linear algebra lemma makes it possible to express the torsion of $\mathbb C^{T}\xrightarrow{D} \mathbb C^{E}\to 0$ by the determinant of a bordered matrix.
\begin{lemma}\label{lem:bordered}
Let $D:\mathbb C^{N}\to \mathbb C^{N}$ be a linear map of rank $N-k$. Let $\boldsymbol{h}_{1}=\{h_{1}^{1},\ldots h_{1}^{k}\}$ and $\boldsymbol{h}_{2}=\{h_{2}^{1},\ldots h_{2}^{k}\}$ be bases of the homology groups of
$$\mathbb C^{N}\to \mathbb C^{N}\to 0.$$
Let $p_{1},\ldots, p_{k}$ be $n$-dimensional row vectors, considered as linear functionals on $\mathbb C^{N}$ such that $[p_{i}(h_{2}^{j})]_{1\le i, j\le k}$ is invertible. Then for any lift $\widetilde{\boldsymbol h_{1}}$ of $\boldsymbol{h}_{1}$ in $\mathbb C^{N}$
$$\mathrm{tor}(C^{N}\xrightarrow{D} C^{N}\to 0 ;\boldsymbol{h}_{1}\cup \boldsymbol{h}_{2})=\pm \frac{1}{\det [p_{i}(h_{2}^{j})]_{1\le i, j\le k}}\begin{bmatrix}
   D &\widetilde{\boldsymbol{h}_{1}}\\
   [p_{i}(e_{j})]_{1\le i\le k, 1\le j\le N}  & 0 \\
\end{bmatrix}.$$
where $\{e_{j}\}$ is the  standard basis of $\mathbb{C}^{N}$.
\end{lemma}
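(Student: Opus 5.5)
The plan is to compute the torsion of the two-term complex $0\to \mathbb C^{N}\xrightarrow{D}\mathbb C^{N}\to 0$ (degrees $2$ and $1$) directly from the definition, with carefully chosen lifts, and then compare with the bordered determinant by column operations. Here $\boldsymbol h_2$ is a basis of $H_2=\ker D$ and $\boldsymbol h_1$ a basis of $H_1=\mathrm{coker} D$. The determinant in the statement is understood as $\det$ of the $(N+k)\times(N+k)$ bordered matrix $\begin{bmatrix} D & \widetilde{\boldsymbol h_1}\\ P & 0\end{bmatrix}$, where $P=[p_i(e_j)]$.

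\textbf{Step 1: torsion from the definition.}
\begin{enumerate}
\item Choose vectors $u_1,\dots,u_{N-k}\in\mathbb C^N$ whose images $b^j=D u_j$ form a basis of $\mathrm{Im} D$. Then $\{u_j\}\cup\{h_2^i\}$ is a basis of the degree-$2$ space.
\item In degree $1$ the relevant basis is $\{D u_j\}\cup\widetilde{\boldsymbol h_1}$.
\item By the defining formula, the torsion is, up to sign and depending on the convention for the exponent $(-1)^{k+1}$,
$$\frac{[Du_1,\dots,Du_{N-k},\widetilde{\boldsymbol h_1};e]}{[u_1,\dots,u_{N-k},h_2^1,\dots,h_2^k;e]}.$$
\end{enumerate}
I will check the exponent convention against the paper's definition, so that the degree-$1$ determinant appears with power $+1$ and the degree-$2$ one with power $-1$, consistent with the shape of the claimed formula.

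\textbf{Step 2: normalize the $u_j$ using $P$.} The choice of the $u_j$ is free modulo $\ker D$. Since $[p_i(h_2^j)]$ is invertible, $\ker P\cap\ker D=0$ and $\ker P$ has dimension $N-k$. Hence $D|_{\ker P}$ is injective, and its image has dimension $N-k$, so it equals $\mathrm{Im} D$. We may therefore take $u_j\in\ker P$.

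\textbf{Step 3: bordered matrix times a change of basis.} Set $U=[u_1,\dots,u_{N-k},h_2^1,\dots,h_2^k]$ and multiply the bordered matrix on the right by the block matrix $\mathrm{diag}(U, I_k)$. The result is
$$\begin{bmatrix} DU & \widetilde{\boldsymbol h_1}\\ PU & 0\end{bmatrix}
=\begin{bmatrix} Du_\bullet & 0 & \widetilde{\boldsymbol h_1}\\ 0 & [p_i(h_2^j)] & 0\end{bmatrix},$$
using $Dh_2^j=0$ and $Pu_j=0$. Permuting columns makes this block triangular. Its determinant is
$$\pm\det[Du_\bullet,\widetilde{\boldsymbol h_1}]\cdot\det[p_i(h_2^j)].$$
Dividing by $\det U$ and by $\det[p_i(h_2^j)]$ recovers exactly the torsion expression from Step 1, which proves the lemma up to sign.

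\textbf{Remaining checks.} Independence of the lift $\widetilde{\boldsymbol h_1}$ is automatic, since changing a lift adds an element of $\mathrm{Im} D$, i.e. a combination of the columns of $D$, which leaves the bordered determinant unchanged. The main point needing care is the sign/exponent convention in Step 1. The only genuinely nontrivial idea is the normalization $u_j\in\ker P$ in Step 2, which makes the lower-left block vanish.
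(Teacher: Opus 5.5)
Your argument is correct and is essentially the paper's: the paper also multiplies the bordered matrix on the right by $\mathrm{diag}([U,\boldsymbol h_2], I_k)$, with $U$ lifting a basis of $\mathrm{Im}\,D$, and reads off $\det[DU,\widetilde{\boldsymbol h_1}]\cdot\det[p_i(h_2^j)]$ from a block form. The one difference is that your normalization $u_j\in\ker P$ is not needed: after permuting columns, the upper-right $N\times k$ block is already zero because $Dh_2^j=0$, so the matrix is block lower triangular whatever $PU$ is.
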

\begin{proof}
Let $U=(u_{1},\ldots, u_{N-k})$ consists column vectors such that $DU$ form a basis of $\mathrm{Im}(D)$. Then, by definition of torsion,
$$\mathrm{tor}=\pm\frac{\det(DU, \boldsymbol h_{2})}{\det(U, \widetilde{\boldsymbol{h}_{1}})}.$$
Observe
$$\det\begin{bmatrix}
D & \boldsymbol h_{2}\\
P & 0
\end{bmatrix}\det\begin{bmatrix}
U & \widetilde{\boldsymbol{h}_{1}} & 0\\
0 & 0 & I_{k}
\end{bmatrix}=\det\begin{bmatrix}
DU & 0 & \boldsymbol{h}_{2}\\
PU & P\widetilde{\boldsymbol{h}_{1}} & 0
\end{bmatrix}=\pm \det\begin{bmatrix}
DU & \boldsymbol{h}_{2} & 0\\
PU & 0 & P\widetilde{\boldsymbol{h}_{1}}
\end{bmatrix}=\det(DU, \boldsymbol{h}_{2})\det(P\boldsymbol{h}_{1}).$$
Moving the terms one obtains
$$\mathrm{tor}=\pm \frac{1}{\det{P\widetilde{\boldsymbol{h}_{1}}}}\det\begin{bmatrix}
D & \boldsymbol h_{2}\\
P & 0
\end{bmatrix}.$$
\end{proof}

\begin{proof}[Proof of theorem \ref{thm:mainincomplete}]
By Lemma \ref{lem:bordered} and Propositions \ref{prop:matrixK} \ref{prop:pairing1} and \ref{prop:pairing2}, the torsion of $\mathcal{K}_{\bullet}^{\boldsymbol{G}}$ with respect to the dual Porti's basis $\boldsymbol{h}_{\bullet}^{\boldsymbol{\gamma}}$ equals to 
$$\mathrm{tor}(\mathbb C^{T}\xrightarrow{K}\mathbb C^{E}\to 0; \boldsymbol{h}_{\bullet}^{\boldsymbol{\gamma}})=\pm \det\begin{bmatrix}
\frac{1}{2}(A\Delta_{z''}+B\Delta_{z}^{-1}) & R\\
\frac{1}{2}(A_{\boldsymbol{\gamma}}\Delta_{z''}+B_{\boldsymbol{\gamma}}\Delta_{z}^{-1}) & 0
\end{bmatrix}.$$
where $R$ is a right inverse of $C^{T}$. Choosing $R=C(C^{T}C)^{-1}$, we have 
$$\mathrm{tor}(\mathcal{K}^{\boldsymbol{G}}; \boldsymbol{h}_{\bullet}^{\boldsymbol \gamma})=\pm \frac{1}{2^{|E|}\det(C^{T}C)}\det\begin{bmatrix}
K & C\\
L_{\boldsymbol{\gamma}} &0
\end{bmatrix}.$$
Apply Proposition \ref{prop:flat}, we have
\begin{align*}
    \mathrm{tor}(\mathcal{K}; \boldsymbol{h}_{\bullet}^{\boldsymbol \gamma})&=\pm\frac{1}{2^{|E|}\det(C^{T}C)}\det\begin{bmatrix}
K & C\\
L_{\boldsymbol{\gamma}} &0
\end{bmatrix}2^{|T|}\prod_{\Delta\in T}z_{\Delta}^{f_{\Delta}''}(z_{\Delta}'')^{-f_{\Delta}}\\
&=\pm\frac{1}{\det(C^{T}C)}\det\begin{bmatrix}
K & C\\
L_{\boldsymbol{\gamma}} &0
\end{bmatrix}\prod_{\Delta\in T}z_{\Delta}^{f_{\Delta}''}(z_{\Delta}'')^{-f_{\Delta}}
\end{align*}
which equals $\mathrm{tor}(M, \partial M; \boldsymbol{h}_{\bullet}^{\boldsymbol \gamma})$ by equation \eqref{eq:applyM}. This is exactly Theorem \ref{thm:mainincomplete}.
\end{proof}

\begin{proof}[Proof of Theorem \ref{thm:main1}]
Let $f_{\Delta}, f_{\Delta}', f_{\Delta}''$ be a strong flattening with respect to $\mathcal{T}$. By Theorem \ref{thm:mainincomplete}, we have verified equation \eqref{eq:main1} for characters $\rho$ such that the induced character on each boundary torus is nontrivial. Since this is a generic condition, we have verified equation \eqref{eq:main1} on a dense subset of $\boldsymbol \gamma$-regular locus of $\mathrm X^{|V|}(M)\cap \mathrm  X^{irr}(M)$. By Theorem \ref{thm:rationalfunction}, both side of $\eqref{eq:main1}$ are rational functions on the Zariski open set; hence equation \eqref{eq:main1} holds for every characters in the Zariski open set. 

For the distinguished character $\rho_{0}$, by \cite[Section 3.5]{DG13}, the value of the flattening monomials does not depend on the combinatorial flattenings. Hence \eqref{eq:main1} holds for all combinatorial flattenings for the distinguished character $\rho_{0}$. This completes the proof of \ref{thm:main1}.
\end{proof}

\section{Proof of Theorem \ref{thm:main2}}
In this section, we prove Theorem \ref{thm:main2}. We first rewrite the one-loop formula \eqref{eq:main1} as follows. Let $P$ be a matrix of size $(|E|-|V|)\times |E|$ whose rows are the standard basis vectors $\{\boldsymbol{e}_{a_{1}},\ldots e_{a_{|E|-|V|}}\}$, such that
$$\mathrm{rank}(P(A\Delta_{z''}+B\Delta_{z}^{-1}))=\mathrm{rank}(PK)=|E|-|V|.$$
Now we define $A^{red}=PA$, $B^{red}=PB$ and $K^{red}=PK$. In other words, $A^{red}$, $B^{red}$ and $K^{red}$ can be obtained from $A$ and $B$ by removing $|V|$ rows. The remaining rows correspond to the subset $E^{red}:=\{e_{a_{1},\ldots e_{a_{|E|-|V|}}}\}$. Now the torsion of $M$ can be written as
$$\mathrm{tor}(M, \boldsymbol m)=\pm \frac{1}{\det(P^{T}, C)}\det\begin{pmatrix}
K^{red}\\
\frac{dH(m_{i})}{d\log z_{j}'}
\end{pmatrix}\prod_{j=1}^{N}z_{j}^{f_{j}''}(z_{j}'')^{-f_{j}}.$$
\begin{remark}
The form above is exactly the one-loop formula appears in \cite{DG13}.
\end{remark}

We denote the log-holonomy of a curve around an edge $e_{i}$ by $H(e_{i})$, so $K^{red}=[\frac{dH(e_{a_k})}{d\log(z_{j})}]$. The matrix 
$$\begin{pmatrix}
K^{red} \\
L_{\boldsymbol{m}}
\end{pmatrix}$$
could be realized as the Jacobian matrix of a holomorphic map sending 
$$\log z_{1}',\ldots,\log z_{|T|}'$$ to 
$$H(e_{a_1}),\ldots, H(e_{a_{|E|-|V|}}), H(m_{1}),\ldots, H(m_{|V|})$$
at the distinguished character $\rho_{0}$. Using the theorem below, we obtain the real Jacobian of the same map.
\begin{theorem}\label{thm:real-complexJ}
Let $F: \mathbb{C}^{n}\to \mathbb C^{n}$ be a holomorphic function that is non-degenerate at $z_{0}$. Define its real coordinate by $z_{j}=x_{j}+i y_{j}$. Then
$$\det_{\mathbb R}F|_{z=z_{0}}=\left|\det_{\mathbb{C}}F|_{z=z_{0}}\right|^{2}.$$
\end{theorem}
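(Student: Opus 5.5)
The plan is to reduce the statement to a pointwise identity for the derivative. Set $J=DF|_{z_0}$, the complex $n\times n$ matrix $[\partial F_k/\partial z_j]$. Since $F$ is holomorphic, the real derivative of $F$, viewed as a map $\mathbb R^{2n}\to\mathbb R^{2n}$ in the coordinates $(x_1,\dots,x_n,y_1,\dots,y_n)$ and $(\mathrm{Re}F,\mathrm{Im}F)$, is the realification of $J$. Writing $J=P+iQ$ with $P,Q$ real, the Cauchy--Riemann equations give the block form
$$J_{\mathbb R}=\begin{pmatrix} P & -Q\\ Q & P\end{pmatrix}.$$
So it suffices to prove the linear-algebra identity $\det J_{\mathbb R}=|\det J|^2$ for every complex matrix $J$.

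Next, conjugate $J_{\mathbb R}$ by the complex matrix
$$S=\frac{1}{\sqrt2}\begin{pmatrix} I & iI\\ I & -iI\end{pmatrix},$$
which is invertible. A direct block multiplication gives
$$S\,J_{\mathbb R}\,S^{-1}=\begin{pmatrix} P-iQ & 0\\ 0 & P+iQ\end{pmatrix}=\begin{pmatrix}\overline J&0\\0&J\end{pmatrix}.$$
I only need to check the order of the blocks up to swapping, since it does not affect the determinant. Because conjugation preserves the determinant,
$$\det J_{\mathbb R}=\det\overline J\,\det J=\overline{\det J}\,\det J=|\det J|^2.$$

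As an alternative route that avoids choosing $S$, I could argue by density. Both sides are continuous in $J$, and diagonalizable matrices are dense. For diagonalizable $J$, the realification of $J$ is similar to the realification of its diagonal form, which is a direct sum of $2\times2$ blocks $\begin{pmatrix}a&-b\\ b&a\end{pmatrix}$ with determinant $a^2+b^2=|\lambda|^2$. Taking the product over the eigenvalues gives $|\det J|^2$.

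The only real obstacle is bookkeeping of conventions. The sign of $\det_{\mathbb R}$ depends on ordering the real coordinates consistently in source and target: either $(x,y)$ blockwise as above, or interleaved $(x_1,y_1,\dots)$. Both orderings give the same value as long as the source and target use the same one, since the reordering permutation then appears twice and its sign squares to $1$. The non-degeneracy hypothesis is not needed for the identity itself. It only ensures that both sides are nonzero, which matters for the later use in converting the complex Jacobian in \eqref{eq:main1} into the real Jacobian in \eqref{eq:main2}.
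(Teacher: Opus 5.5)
Your proof is correct. The paper gives no proof of this statement; it cites it as a standard fact and uses it only to pass from the complex Jacobian to the real one in Corollary~\ref{cor:jacobian}. Your reduction to the realification $\begin{pmatrix}P&-Q\\Q&P\end{pmatrix}$, followed by conjugation with the unitary matrix $S$, is the standard justification, and the density argument is a valid alternative.

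One harmless slip: with your $S$ one actually gets $SJ_{\mathbb R}S^{-1}=\mathrm{diag}(J,\overline J)$, not $\mathrm{diag}(\overline J,J)$. As you note, the block order does not affect the determinant. Your remark on coordinate orderings is consistent with the sign ambiguity the paper allows in Corollary~\ref{cor:jacobian}.
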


\begin{corollary}\label{cor:jacobian}
Viewing $T_{\rho_{0}}^{*}\mathcal{Z}(\mathcal{T})$ as a $2|T|$-dimensional real vector space, we have
$$\det\left(\frac{d\mathrm{Re}H(e_{a_k}), d\mathrm{Im}H(e_{a_k}), d\mathrm{Re}H(m_{i}), d\mathrm{Im}H(m_{i})}{d\log |z_{j}'|, d\arg z_{j}'}\right)=\pm \left|\det\begin{pmatrix}
    K^{red} \\
    \frac{dH(m_{i})}{d\log z_{j}'}
\end{pmatrix}\right|^{2}$$
\end{corollary}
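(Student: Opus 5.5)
The plan is to apply Theorem \ref{thm:real-complexJ} to a holomorphic map whose complex Jacobian is the bordered matrix in the corollary. Then we identify the real coordinates $(\mathrm{Re}\,w,\mathrm{Im}\,w)$ of $w=\log z_j'$ with $(\log|z_j'|,\arg z_j')$, and the real coordinates of the target with $(\mathrm{Re}H,\mathrm{Im}H)$.

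First, I set up the map. Near $\rho_0$ (where $z_j'\notin\{0,1\}$), I use $w_j=\log z_j'$ as local holomorphic coordinates on $\mathcal{Z}(\mathcal{T})$. Every $\log z_j$ and $\log z_j''$ is a holomorphic function of $w_j$, since $z=1-1/z'$ and $z''=1-z$ (up to the $\pi i$ branches). Then $H(e_{a_k})$ and $H(m_i)$ are $\mathbb{Z}$-linear combinations of $\log z,\log z',\log z''$, so
\[
F=(H(e_{a_1}),\dots,H(e_{a_{|E|-|V|}}),H(m_1),\dots,H(m_{|V|}))
\]
is a holomorphic map $\mathbb{C}^{|T|}\to\mathbb{C}^{|T|}$. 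Here I use $|E|=|T|$, which holds since the Euler characteristic of the cusped manifold is $0$. By the chain rule, its complex Jacobian with respect to $w$ is exactly the matrix $\begin{pmatrix}K^{red}\\ dH(m_i)/d\log z_j'\end{pmatrix}$ appearing in the statement. This uses the convention $K^{red}=[dH(e_{a_k})/d\log z_j']$ just fixed, together with the computation $\langle d\log z,s_\Delta\rangle=-z''$ and the analogous ones from the proof of Proposition \ref{prop:pairing1}.

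Second, I check non-degeneracy at $\rho_0$. The one-loop formula in its reduced form, as rewritten just before the corollary, expresses $\mathrm{tor}(M,\rho_0,\boldsymbol m)$ as a nonzero multiple of this determinant. Porti's torsion is nonzero at the $\boldsymbol m$-regular character $\rho_0$, since the holonomy of $\rho_0$ is regular for any peripheral system. Hence the determinant is nonzero and Theorem \ref{thm:real-complexJ} applies. Alternatively, I can invoke Neumann–Zagier: the reduced edge equations together with the meridian log-holonomies form local coordinates of the deformation space near the complete structure.

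Third, I pass to real coordinates. Writing $w_j=\log|z_j'|+i\arg z_j'$ and $F=\mathrm{Re}F+i\,\mathrm{Im}F$, Theorem \ref{thm:real-complexJ} gives $\det_{\mathbb R}DF=|\det_{\mathbb C}DF|^2$. Here $\det_{\mathbb R}DF$ is computed with the columns ordered as $(\log|z_j'|)_j$ followed by $(\arg z_j')_j$, and the rows ordered as the real parts followed by the imaginary parts. The ordering in the corollary may interleave rows and columns differently (for instance $d\mathrm{Re}H(e)$, $d\mathrm{Im}H(e)$, $d\mathrm{Re}H(m)$, $d\mathrm{Im}H(m)$). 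Reordering rows and columns only changes the sign, which is absorbed by the $\pm$.

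The only real obstacle is Theorem \ref{thm:real-complexJ} itself, if it needs proof. I would handle it by writing $DF=X+iY$, so that the real Jacobian is $\begin{pmatrix}X&-Y\\ Y&X\end{pmatrix}$ by Cauchy–Riemann. Conjugating by $\begin{pmatrix}I&iI\\0&I\end{pmatrix}$-type block matrices block-triangularizes it into $\mathrm{diag}(X+iY,\,X-iY)$, so its determinant is $\det(DF)\overline{\det(DF)}$.
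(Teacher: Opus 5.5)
Your proposal is correct and follows the paper's argument. The bordered matrix is the complex Jacobian of $\log z'\mapsto(H(e_{a_k}),H(m_i))$, up to the row sign coming from $\partial H(e)/\partial\log z_\Delta'=-(A_{e\Delta}z_\Delta''+B_{e\Delta}z_\Delta^{-1})$, which $|\cdot|^2$ absorbs. Applying Theorem~\ref{thm:real-complexJ} with $\mathrm{Re}\log z_j'=\log|z_j'|$ and $\mathrm{Im}\log z_j'=\arg z_j'$ then gives the claim, and your block-triangularization supplies the proof of that theorem, which the paper omits.

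Two minor points. First, $z''=1/(1-z')=1-1/z$, not $1-z$; this is harmless for holomorphy. Second, your non-degeneracy check is unnecessary, since $\det_{\mathbb R}=|\det_{\mathbb C}|^2$ holds for every complex-linear map.
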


Notice that at $\rho_{0}$, $\mathrm{Re} H(m_{i})$ and $\mathrm{Re} H(l_{i})$ form a system of local coordinates of the deformation space. By Cauchy-Riemann equation, we have
\begin{equation}\label{eqn:realimaginaryhol}
\det \left(\frac{d\mathrm{Re}H(m_{i}), d\mathrm{Re}H(l_{j})}{d\mathrm{Re} H(m_{j}), d\mathrm{Im}H(m_{i})}\right)=(-1)^{|V|}\det\left(\mathrm{Im}\frac{dH(l_{i})}{dH(m_{j})}\right).\end{equation}

\begin{lemma}\label{lem:localjac}
Suppose $z$ takes value in the upper half complex plane, and viewing $z'=\frac{1}{1-z}$ and $z''=1-\frac{1}{z}$ as functions of $z$. Then we have
$$\det\frac{(d\log |z|, d\log |z''|)}{(d\log z', d\arg z')}=\frac{\mathrm{Im}(z)}{|z|^{2}}$$
\end{lemma}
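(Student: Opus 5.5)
The plan is to reduce the computation to a single holomorphic coordinate. I read the denominator $(d\log z', d\arg z')$ as the real coordinates $(u,\theta)=(\log|z'|,\arg z')$, which are the real and imaginary parts of the holomorphic coordinate $w=\log z'$. The two numerator functions $\log|z|$ and $\log|z''|$ are then the real parts of holomorphic functions of $w$. The Jacobian should follow from the Cauchy--Riemann equations, in the same spirit as Theorem~\ref{thm:real-complexJ}.

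\textbf{Step 1: rewrite $z$ and $z''$ in terms of $z'$.} From $z'=\frac{1}{1-z}$ I get $z=\frac{z'-1}{z'}$ and $z''=1-\frac1z=\frac{1}{1-z'}$. Hence
\[
\log z=\log(z'-1)-\log z',\qquad \log z''=-\log(1-z').
\]
Differentiating with respect to $w$, using $dz'/dw=z'$, gives
\[
a:=\frac{d\log z}{dw}=\frac{z'}{z'-1}-1=\frac{1}{z'-1},\qquad b:=\frac{d\log z''}{dw}=\frac{z'}{1-z'}.
\]

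\textbf{Step 2: pass from complex to real derivatives.} For a holomorphic $f(w)$ with $w=u+i\theta$, the Cauchy--Riemann equations give
\[
\partial_u\mathrm{Re}f=\mathrm{Re}f',\qquad \partial_\theta\mathrm{Re}f=-\mathrm{Im}f'.
\]
So the $2\times 2$ Jacobian has rows $(\mathrm{Re}\,a,-\mathrm{Im}\,a)$ and $(\mathrm{Re}\,b,-\mathrm{Im}\,b)$. Its determinant is
\[
\mathrm{Im}\,a\,\mathrm{Re}\,b-\mathrm{Re}\,a\,\mathrm{Im}\,b=\mathrm{Im}(a\bar b).
\]
Substituting the values of $a$ and $b$,
\[
a\bar b=\frac{\bar z'}{(z'-1)(1-\bar z')}=-\frac{\bar z'}{|1-z'|^2},
\qquad\text{so}\qquad
\mathrm{Im}(a\bar b)=\frac{\mathrm{Im}\,z'}{|1-z'|^2}.
\]

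\textbf{Step 3: express the result in terms of $z$.} I have $\mathrm{Im}\,z'=\mathrm{Im}\frac{1}{1-z}=\frac{\mathrm{Im}\,z}{|1-z|^2}$. Also $1-z'=\frac{-z}{1-z}$, so $|1-z'|^2=\frac{|z|^2}{|1-z|^2}$. Taking the ratio, the factors $|1-z|^2$ cancel and leave $\frac{\mathrm{Im}(z)}{|z|^2}$, which is positive for $z$ in the upper half plane.

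The only delicate point is fixing the orientation and sign conventions, namely the row and column order and the sign in the Cauchy--Riemann relation. These determine whether one obtains $+\frac{\mathrm{Im}(z)}{|z|^2}$ or its negative. I would double-check this by evaluating at $z=i$ directly. There $z'=\frac{1+i}{2}$, and both the determinant and the claimed value equal $1$. The rest is routine algebra.
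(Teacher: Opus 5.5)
Your proof is correct and is essentially the paper's approach. The paper justifies this lemma only with ``by direct computations.'' Your computation in the holomorphic coordinate $w=\log z'$, reading $d\log z'$ as $d\log|z'|$ as in Corollary~\ref{cor:jacobian}, is exactly such a computation. It correctly gives $\mathrm{Im}(a\bar b)=\mathrm{Im}(z)/|z|^{2}$, with the right sign for the stated ordering of rows and columns.
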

\begin{proof}
By direct computations.
\end{proof}
Applying Lemma \ref{lem:localjac} and Equation \eqref{eqn:realimaginaryhol}, We obtain, up to sign,
\begin{equation}\label{eqn:jacobirelation}
\det\left(\frac{d\theta_{a_{k}}, d\mathrm{Re}H(e_{a_k}), d\mathrm{Re}H(m_{i}), d\mathrm{Re}H(l_{i})}{d\log |z_{j}|, d\log |z_{j}''|}\right)=\det\left(\mathrm{Im}\frac{dH(l_{i})}{dH(m_{j})}\right)\prod_{i\in T}\frac{|z_{i}|^{2}}{\mathrm{Im}(z_{i})}\left|\det\begin{pmatrix}
    K^{red} \\
    \frac{dH(m_{i})}{d\log z_{j}'}
\end{pmatrix}\right|^{2}
\end{equation}
To compute the left-hand side, we utilize the symplectic nature of Neumann-Zagier matrices.

Consider the linear map $\mathbb{R}^{E}\rightarrow  \mathbb R^{2T}$ sending decorated edge length to the space of tetrahedron shapes with coordinates $(\log |z_{i}|, \log|z_{i}''|)_{i\in T}$.
\begin{lemma}\label{lem:lengthtoshape}
The matrix representation of the linear map $\mathbb{R}^{E}\rightarrow  \mathbb R^{2T}$ is
$$\frac{1}{2}\begin{pmatrix}
-B^{T} \\
A^{T}
\end{pmatrix}$$
In addition, its restriction to $\mathbb R^{E^{red}}$ has the same image, and is represented by 
$S=\frac{1}{2}(-B^{red}, A^{red})^{T}$.
\end{lemma}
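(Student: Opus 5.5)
We compute the map tetrahedron by tetrahedron, since $(\log|z_\Delta|,\log|z''_\Delta|)$ depends only on the six decorated edge lengths of $\Delta$. By \eqref{eqn:zinlength},
\[
\log|z_\Delta|=\tfrac12\bigl(x_{02}+x_{13}-x_{03}-x_{12}\bigr),\qquad
\log|z''_\Delta|=\tfrac12\bigl(x_{01}+x_{23}-x_{02}-x_{13}\bigr),
\]
where $x_{ij}$ is the length of the edge of $\mathcal T$ to which the edge $e_{ij}(\Delta)$ is glued. The decorated edge length of an edge of $\mathcal T$ is well defined: the horospheres are chosen globally at each cusp, so the same value enters every tetrahedron containing that edge. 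It follows that the $(\Delta,e)$ entry of the $\log|z_\Delta|$-block is $\tfrac12$ times the number of edges of $\Delta$ glued to $e$ that carry parameter $z'$, minus the number carrying $z''$. Here we use the quad labelling from the pictures in the proof of Proposition~\ref{prop:pairing1}: edges $01,23$ carry $z$, edges $03,12$ carry $z'$, and edges $02,13$ carry $z''$.

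This count is exactly $\tfrac12(G'_{e\Delta}-G''_{e\Delta})=-\tfrac12 B_{e\Delta}$. Likewise the $\log|z''_\Delta|$-block has entry $\tfrac12(G_{e\Delta}-G'_{e\Delta})=\tfrac12A_{e\Delta}$, using the same labelling of quads to match $G,G',G''$. Hence the matrix is $\tfrac12\begin{pmatrix}-B^T\\ A^T\end{pmatrix}$, after ordering coordinates as all $\log|z|$ followed by all $\log|z''|$. The main care needed is the bookkeeping that the quad conventions in $G,G',G''$ agree with the vertex labels used in \eqref{eqn:zinlength}. I would check this on one tetrahedron: the sign choice $z''=1-1/z$ forces $|z||z'||z''|=1$, and summing the three formulas in \eqref{eqn:zinlength} gives $0$, consistent with $A-B+\ldots$. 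A global sign discrepancy would only change the overall sign.

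For the second assertion, the kernel of this matrix is the image of the infinitesimal horosphere changes. Moving the horosphere at cusp $v$ by $t$ adds $t$ times the number of endpoints of $e$ at $v$ to each $x_e$, i.e.\ adds $tC_{\cdot v}$, and this leaves every shape unchanged. So $C\mathbb R^{V}\subset\ker$. In fact the kernel equals $C\mathbb R^V$, because $\mathrm{rank}\begin{pmatrix}A&B\end{pmatrix}=|E|-|V|$, a standard Neumann--Zagier fact that also follows from the rank of $K$ used above.

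Then $\mathbb R^{E}=\mathbb R^{E^{red}}\oplus C\mathbb R^{V}$ whenever $\det(P^T,C)\neq0$, which is the nondegeneracy already built into the reduced formula. So the restriction to $\mathbb R^{E^{red}}$ has the same image. Its matrix is obtained by keeping the columns indexed by $E^{red}$, namely $\tfrac12(-B^{red},A^{red})^T$. The only delicate point I expect is justifying $\det(P^T,C)\ne0$ from the rank condition $\mathrm{rank}(PK)=|E|-|V|$. For this I would use that the row space of $K$ is orthogonal to $C$ by the symplectic relation of Proposition~\ref{prop:NZsymplectic}, namely $C^TK=0$ on the vertex combinations. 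Rows of $K$ in $E^{red}$ independent then forces the complementary rows to span modulo $\mathrm{Im}\,C$.
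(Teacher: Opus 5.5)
Your first part follows the paper's argument. The paper writes $\log|z_\Delta|=\tfrac12(b'_\Delta-b''_\Delta)$ and $\log|z''_\Delta|=\tfrac12(b_\Delta-b'_\Delta)$, with $b=x_{01}+x_{23}$, $b'=x_{02}+x_{13}$, $b''=x_{03}+x_{12}$, and then uses $b_\Delta=(G^T\boldsymbol x)_\Delta$, $b'_\Delta=(G'^T\boldsymbol x)_\Delta$, $b''_\Delta=(G''^T\boldsymbol x)_\Delta$.

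However, the quad labelling you state is swapped. For the computation to work, edges $02,13$ must carry $z'$ and edges $03,12$ must carry $z''$. This is also the standard convention: for vertices $\infty,0,1,z$, the edge from $\infty$ to $1$ has shape $1/(1-z)$. Taken literally, your labelling ($03,12$ carry $z'$; $02,13$ carry $z''$) makes \eqref{eqn:zinlength} produce entries $\tfrac12(G''-G')_{e\Delta}=\tfrac12B_{e\Delta}$ and $\tfrac12(G-G'')_{e\Delta}=\tfrac12(A-B)_{e\Delta}$. That is the matrix $\tfrac12\begin{pmatrix}B^T\\ (A-B)^T\end{pmatrix}$, which is not an overall sign change of the claimed one.

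So your remark that a discrepancy would only flip the overall sign is wrong. Your proposed check (summing the three formulas to get $0$) cannot detect the error, since it holds for every labelling. Your sentences computing the entries as $\tfrac12(G'-G'')$ and $\tfrac12(G-G')$ are correct only under the corrected labelling. Fix that one sentence and the first part is complete.

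For the second assertion you take a different route from the paper. The paper only says that $S$ consists of the linearly independent columns. The reason is that
$PK=P\begin{pmatrix}A&B\end{pmatrix}\begin{pmatrix}\Delta_{z''}\\ \Delta_z^{-1}\end{pmatrix}$,
so independence of the rows of $PK$ forces independence of the rows of $P\begin{pmatrix}A&B\end{pmatrix}$. Together with $\mathrm{rank}\begin{pmatrix}A&B\end{pmatrix}=|E|-|V|$, this gives the claim directly, without using $C$.

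You instead identify the kernel with the horosphere directions $C\mathbb R^V$ and prove $\mathbb R^{E}=\mathbb R^{E^{red}}\oplus C\mathbb R^V$. This is valid. If some $Cw\neq 0$ were supported on $E^{red}$, then $(Cw)^TK=w^TC^TK=0$ would be a nontrivial relation among the rows of $PK$. Hence $\mathbb R^{E^{red}}\cap C\mathbb R^V=0$, and injectivity of $C$ (implicit in $\det(C^TC)\neq 0$) finishes the argument by counting dimensions. Your route is longer than needed for the lemma. In exchange it yields $\det(P^T,C)\neq 0$, which the rewritten one-loop formula with $\det(P^{T},C)$ in its denominator uses without comment.

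Two corrections to this part:
\begin{enumerate}
\item $C^TK=0$ is not a consequence of the symplectic relations in Proposition~\ref{prop:NZsymplectic}. It holds because each ideal vertex of $\Delta$ meets exactly one edge of each quad type. Hence $C^TG=C^TG'=C^TG''$, so $C^TA=C^TB=0$, and therefore $C^TK=0$.
\item Your phrase about the complementary rows spanning modulo $\mathrm{Im}\,C$ should be stated precisely as $\mathbb R^{E^{red}}\cap \mathrm{Im}\,C=0$.
\end{enumerate}
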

\begin{proof}
Denote the sum of the decorated edge lengths of pairs of opposite edges in a tetrahedron $\Delta\in T$ by $b_{\Delta}, b_{\Delta}'$ and $b_{\Delta}''$. By Equation \eqref{eqn:zinlength}
\begin{equation}\label{eqn:lengthlogz}
\log |z|=\frac{b_{\Delta}'-b_{\Delta}''}{2}, \hspace{5mm}\log |z''|=\frac{b_{\Delta}-b_{\Delta}'}{2}.\end{equation}
On the other hand, the quantity $b_{\Delta}, b_{\Delta}'$ and $b_{\Delta}''$ and decorated edge lengths $x_{e}$ are related by the Neumann-Zagier matrices:
$$b_{\Delta}=(G^{T}\boldsymbol x)_{\Delta}, \hspace{3mm}b_{\Delta}'=(G'^{T}\boldsymbol x)_{\Delta}, \hspace{3mm} b_{\Delta}''=(G''^{T}\boldsymbol x)_{\Delta},$$
The first statement follows. The second statement follows because $S$ contains exactly the linearly independent columns.
\end{proof}

Let $(S, R)$ be a basis of $T_{\rho}\mathbb{R}^{2T}$, such that
\begin{enumerate}
    \item $S=\frac{1}{2}(-B^{red}, A^{red})^{T}$, i.e., their columns lies in the image of $(\mathbb R^{E}\to \mathbb R^{2T})$.
    \item $R$ satisfies
    $$\begin{pmatrix}
     A^{red}& B^{red}\\
     A_{\boldsymbol m} & B_{\boldsymbol m}\\
     A_{\boldsymbol l} & B_{\boldsymbol l}
    \end{pmatrix}R=I$$
\end{enumerate}
By the defining property of the matrix $R$, we have
\begin{lemma}\label{lem:changetangentbase}
\begin{align*}
\det\left(\frac{d\theta_{a_{k}}, d\mathrm{Re}H(e_{a_k}), d\mathrm{Re}H(m_{i}), d\mathrm{Re}H(l_{i})}{d\log |z_{j}|, d\log |z_{j}''|}\right)&=\det(\langle d\theta_{a_{k}}, d\mathrm{Re}H(e_{a_k}), d\mathrm{Re}H(m_{i}), d\mathrm{Re}H(l_{i}); \partial_{\log |z_{j}|}, \partial_{\log|z_{j}''|}\rangle)\\
&=\det\begin{pmatrix}
\frac{d\boldsymbol{\theta}^{red}}{d\boldsymbol{x}^{red}}&* &* &*\\
0&I&0&0\\
0&0&I&0\\
0&0&0&I
\end{pmatrix}\frac{1}{\det(S, R)}\\
&=\det\left(\frac{d\boldsymbol{\theta}^{red}}{d\boldsymbol{x}^{red}}\right)\frac{1}{\det(S, R)}
\end{align*}
\end{lemma}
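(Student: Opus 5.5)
The plan is to compute the determinant in the basis $(S,R)$ of $T_{\rho}\mathbb R^{2T}$, where it becomes block triangular. The first equality in Lemma \ref{lem:changetangentbase} is only notation: the Jacobian of the covectors $d\theta_{a_k}, d\mathrm{Re}H(e_{a_k}), d\mathrm{Re}H(m_i), d\mathrm{Re}H(l_i)$ against $\partial_{\log|z_j|},\partial_{\log|z_j''|}$ is the matrix of pairings. For the second equality, use $\det(\langle \omega;\,\partial\rangle)\det(S,R)=\det(\langle\omega;\,(S,R)\rangle)$, so it suffices to compute the pairings of the covectors with the columns of $S$ and $R$. We take these covectors to be linear combinations of $d\log|z_j|, d\log|z_j''|$ with coefficients read off from the Neumann--Zagier data. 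Concretely, $d\mathrm{Re}H(c)=A_c\, d\log|z| + B_c\, d\log|z''|$ (up to the normalization used in the paper), and $d\theta_e$ is the analogous expression in the arguments.

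The pairings with the columns of $R$ are immediate. By the defining property of $R$, the rows $(A^{red},B^{red})$, $(A_{\boldsymbol m},B_{\boldsymbol m})$, $(A_{\boldsymbol l},B_{\boldsymbol l})$ pair with $R$ to give the identity blocks. This accounts for the three identity blocks in the last columns, and the entries in the $d\theta$ rows are simply recorded as $*$.

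For the columns of $S$, Lemma \ref{lem:lengthtoshape} says each column is the image of a decorated edge-length variation $\partial_{x_{e}}$, $e\in E^{red}$. Pairing $d\theta_{a_k}$ with it therefore gives exactly $\partial\theta_{a_k}/\partial x_e$, which produces the block $d\boldsymbol\theta^{red}/d\boldsymbol x^{red}$.

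The key vanishing is that every $d\mathrm{Re}H$ row pairs to zero with $S$. For the rows $d\mathrm{Re}H(e)$ and $d\mathrm{Re}H(\gamma)$ with $\gamma\in\{m_i,l_i\}$, this pairing equals $\tfrac12(-A_cB^{T}+B_cA^{T})$ restricted to $E^{red}$. It vanishes by Proposition \ref{prop:NZsymplectic}: the relation $AB^T-BA^T=0$ handles the edge rows, and $A_\gamma B^T-B_\gamma A^T=0$ handles the peripheral rows. Geometrically, changing a decoration does not alter the underlying hyperbolic structure, so the moduli of the edge and peripheral holonomies are unchanged. Assembling the blocks gives the stated block upper-triangular matrix, and its determinant is $\det(d\boldsymbol\theta^{red}/d\boldsymbol x^{red})$.

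I expect the main obstacle to be bookkeeping of signs and the factor $\tfrac12$. The real-part rows must be expressed with exactly the coefficients $(A_c,B_c)$ in the coordinates $(\log|z|,\log|z''|)$. This uses $\log|z'|=-\log|z|-\log|z''|$ and $G,G',G''\mapsto A=G-G'$, $B=G''-G'$. The identity blocks from $R$ only come out as $I$ if this normalization matches the one used to define $R$. The sign conventions are harmless, since the statement holds up to sign, but they must be checked consistently against Lemma \ref{lem:lengthtoshape}.
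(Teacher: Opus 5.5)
Your proposal is correct and follows the paper's route. You rewrite the Jacobian in the basis $(S,R)$. The defining property of $R$ gives the identity blocks. The $S$-columns, as images of $\partial_{x_e}$, give $d\boldsymbol\theta^{red}/d\boldsymbol x^{red}$ in the $\theta$-rows and zeros in the $\mathrm{Re}H$-rows by $AB^T-BA^T=0$ and $A_\gamma B^T-B_\gamma A^T=0$; the paper leaves this vanishing implicit, and you make it explicit.

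One correction to your geometric gloss: the $S$-directions are not changes of decoration. Those lie in the kernel, spanned by the columns of $C$, whereas the $S$-directions genuinely change the shapes. The zero block holds because shapes built from consistent decorated edge lengths automatically satisfy the modulus parts of the edge and peripheral equations.
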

\begin{proof}
The first equality is a reinterpretation of the Jacobian matrix in terms of the pairing between tangent vectors and cotangent vectors. The second equality comes from changing the standard basis of $\mathbb R^{2T}$ to $S, R$.
\end{proof}
To compute $\det(S, R)$, we utilize the following symplectic linear algebra lemma.
\begin{lemma}\label{lem:symplecticlinear}
Consider $\mathbb R^{2N}$ equipped with the standard symplectic structure
$$\Omega_{N}:=\begin{bmatrix}
    0 &I_{N} \\
    -I_{N} & 0
\end{bmatrix}.$$
Suppose $X$ consists of linearly independent column vectors that span an isotropic subspace. Let $\Phi=[\Phi_{0},\Phi_{1}]$ be a $2N \times (m+(2N-2m))$-matrix satisfying
\begin{enumerate}
    \item $\Phi^{T} X=0$,
    \item $\Phi_{0}^{T}\Omega_{N}\Phi_{1}=0$, and
    \item $\Phi_{1}^{T}\Omega_{N} \Phi_{1}=\Omega_{N-m}$.
\end{enumerate}
Then for any matrix $Y$ such that $\Phi^{T}Y=I$, one has
$$|\det(X,Y)|=|\frac{1}{\det(-\Phi_{0}^{T}\Omega_{N}X(X^{T}X)^{-1})}|$$
\end{lemma}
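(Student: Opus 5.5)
The plan is to compute $\det(X,Y)$ by multiplying it by a square matrix whose determinant is easy to control. Only the hypotheses of the lemma are used, together with $\Omega_N^T\Omega_N=I$, $\Omega_N^2=-I$ and $\det\Omega_k=1$. Write $\Omega=\Omega_N$ and $G=X^TX$, which is invertible because the columns of $X$ are independent. Let $Q$ be the $2N\times 2N$ matrix whose first $m$ rows are $G^{-1}X^T$ and whose remaining rows are $\Phi^T$. Since $\Phi^TX=0$ and $\Phi^TY=I$,
$$Q\,(X,Y)=\begin{pmatrix} I_m & G^{-1}X^TY\\ 0 & I_{2N-m}\end{pmatrix}.$$
This is block upper triangular with determinant $1$, so $\det(X,Y)=1/\det Q$, and in particular $Q$ is invertible. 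It therefore suffices to show $|\det Q|=|\det(\Phi_0^T\Omega X G^{-1})|$.

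To compute $\det Q$, I will use the auxiliary $2N\times 2N$ matrix $V=(X,\ \Omega X,\ \Omega\Phi_1)$; its column count is $m+m+(2N-2m)=2N$. I compute $QV$ blockwise, using isotropy $X^T\Omega X=0$ and hypothesis (2), $\Phi_0^T\Omega\Phi_1=0$:
$$QV=\begin{pmatrix} I & 0 & *\\ 0 & \Phi_0^T\Omega X & 0\\ 0 & \Phi_1^T\Omega X & \Phi_1^T\Omega\Phi_1\end{pmatrix}.$$
Here the $(1,2)$ block is $G^{-1}X^T\Omega X=0$ and the $(2,3)$ block is $\Phi_0^T\Omega\Phi_1=0$. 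The lower right $2\times 2$ block array is block lower triangular, and hypothesis (3) gives $\Phi_1^T\Omega\Phi_1=\Omega_{N-m}$, which has determinant $1$. Hence $\det Q\cdot\det V=\det(\Phi_0^T\Omega X)$.

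It remains to find $|\det V|$, and I will do this through $\det(V^T\Omega V)=(\det V)^2$. The blocks of $V^T\Omega V$ are:
$$X^T\Omega X=0,\qquad X^T\Omega(\Omega X)=-G,\qquad X^T\Omega(\Omega\Phi_1)=-X^T\Phi_1=0,$$
$$(\Omega X)^T\Omega(\Omega X)=X^T\Omega X=0,\qquad (\Omega\Phi_1)^T\Omega(\Omega\Phi_1)=\Phi_1^T\Omega\Phi_1=\Omega_{N-m}.$$
The third entry vanishes by hypothesis (1). The only unknown is $c=(\Omega X)^T\Omega(\Omega\Phi_1)=X^T\Omega\Phi_1$. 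Thus, with $D=\Omega_{N-m}$,
$$V^T\Omega V=\begin{pmatrix}0&-G&0\\ G&0&c\\ 0&-c^T&D\end{pmatrix}.$$
Clearing $c$ with column operations using the invertible block $-G$ in the first block row, and $c^T$ similarly with row operations, leaves $\pm\det(G)^2\det D=\pm\det(G)^2$. Therefore $|\det V|=\det G$.

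Combining, $|\det Q|=|\det(\Phi_0^T\Omega X)|/\det G=|\det(-\Phi_0^T\Omega X G^{-1})|$, and $|\det(X,Y)|=1/|\det Q|$ gives the claim. The main obstacle is choosing $V$ so that both $QV$ and $V^T\Omega V$ become block triangular. The key point is that the unknown cross terms $\Phi_1^T\Omega X$ and $X^T\Omega\Phi_1$ must sit off the diagonal in each product, and the ordering $(X,\Omega X,\Omega\Phi_1)$ achieves this.
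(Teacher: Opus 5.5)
Your proof is correct, and its first step is the paper's: the paper also multiplies $(X,Y)$ on the left by $Z=\begin{pmatrix}(X^TX)^{-1}X^T\\ \Phi^T\end{pmatrix}$ (your $Q$) to reduce the problem to computing $\det Z$.

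The two arguments differ in how $\det Z$ is found.

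\textbf{The paper's route.} It computes $Z\Omega_N Z^T$ directly and reads off $|\det Z|^2$ from its block form. This is a single Gram computation. However, its middle diagonal block is $\Phi_0^T\Omega_N\Phi_0$, and the paper sets this to $0$ although it is not among the stated hypotheses. The claim does hold. By (1), the columns of $\Phi$ lie in $X^{\perp}$, and since $\Phi^TY=I$ they form a basis of $X^{\perp}$. The radical of $\Omega_N$ restricted to $X^{\perp}$ is $\Omega_N X$, which has dimension $m$. Hence $\Phi^T\Omega_N\Phi$ has rank $2N-2m$, and together with (2) and (3) this forces $\Phi_0^T\Omega_N\Phi_0=0$. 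In the application it also follows directly from Proposition~\ref{prop:NZsymplectic}.

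\textbf{Your route.} The auxiliary frame $V=(X,\Omega X,\Omega\Phi_1)$ costs you a second block computation. In exchange, neither $QV$ nor $V^T\Omega V$ ever contains $\Phi_0^T\Omega\Phi_0$. Your argument therefore uses exactly isotropy and hypotheses (1)--(3), with no hidden step.

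One small wording slip: the column operation that removes $c$ uses the block $G$ in the first block column. The block $-G$ in the first block row is what the row operation uses to remove $-c^T$. The operations themselves are valid.
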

\begin{proof}
Observe
$$\begin{pmatrix}
(X^{T}X)^{-1}X^{T} \\
\Phi^{T}
\end{pmatrix}\begin{pmatrix}
X & Y
\end{pmatrix}=\begin{pmatrix}
I & *\\
\Phi^{T}X & I
\end{pmatrix}=\begin{pmatrix}
I & *\\
0 & I
\end{pmatrix}.$$
Hence it suffices to compute the determinant of 
$$Z:=\begin{pmatrix}
(X^{T}X)^{-1}X^{T} \\
\Phi^{T}
\end{pmatrix}$$
Also observe
$$Z\Omega_{N} Z^{T}=\begin{pmatrix}
* & (X^{T}X)^{-1}X^{T}\Omega_{N} \Phi_{0} & *\\
\Phi_{0}^{T}\Omega_{N} X(X^{T}X)^{-1} &0 &0 \\
* & 0&\Omega_{N-m}
\end{pmatrix}$$
Therefore, we compute
$|\det(Z)|=\sqrt{|\det(Z\Omega_{N}Z^{T})|}=\det(\Phi_{0}^{T}\Omega_{N} X(X^{T}X)^{-1})$
\end{proof}

\begin{corollary}
$$|\det(S, R)|=\frac{1}{2^{|E|}}.$$
\end{corollary}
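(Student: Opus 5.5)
We apply Lemma \ref{lem:symplecticlinear} with $N=|T|$, $X=S$ and $Y=R$. The rows of the Neumann--Zagier block serve as $\Phi^{T}$.

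First we set up the identification. On $\mathbb R^{2T}$ with coordinates $(\log|z_j|,\log|z_j''|)$ we use the standard symplectic form $\Omega_{|T|}$. The columns of $S=\frac12(-B^{red},A^{red})^{T}$ are linearly independent by Lemma \ref{lem:lengthtoshape}. They span an isotropic subspace, because
$$S^{T}\Omega S=\tfrac14(-B^{red},A^{red})\Omega(-B^{red},A^{red})^{T}=\tfrac14\big(B^{red}(A^{red})^{T}-A^{red}(B^{red})^{T}\big)=0$$
by Proposition \ref{prop:NZsymplectic}. Take $\Phi_0^{T}=(A^{red},B^{red})$ and $\Phi_1^{T}=\begin{pmatrix}A_{\boldsymbol m}&B_{\boldsymbol m}\\ A_{\boldsymbol l}&B_{\boldsymbol l}\end{pmatrix}$, suitably normalized. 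Then $\Phi^{T}R=I$ is exactly the defining property of $R$. The three hypotheses of the lemma are checked as follows.
\begin{enumerate}
\item $\Phi^{T}S=\frac12(-A^{red}(B^{red})^{T}+B^{red}(A^{red})^{T})$, together with the analogous peripheral rows. These vanish by $AB^{T}=BA^{T}$ and $A_\gamma B^{T}=B_\gamma A^{T}$.
\item $\Phi_0^{T}\Omega\Phi_1=A^{red}B_\gamma^{T}-B^{red}A_\gamma^{T}=0$, by the same proposition.
\item $\Phi_1^{T}\Omega\Phi_1$ has off-diagonal blocks $A_mB_l^{T}-B_mA_l^{T}=2\,i(m,l)=2I$ and vanishing diagonal blocks. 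Hence it equals $2\Omega_{|V|}$ rather than $\Omega_{|V|}$.
\end{enumerate}
The dimension count $m=|E|-|V|$ with $2N-2m=2|V|$ uses $|E|=|T|$, which holds for an ideal triangulation of a cusped manifold.

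The factor $2$ in (3) is handled in one of two ways. Either we rescale $\Phi_1$ by $1/\sqrt2$, which multiplies $\det(S,R)$ by $\sqrt2^{\,2|V|}=2^{|V|}$. Or we rerun the proof of Lemma \ref{lem:symplecticlinear} with $\Omega_{N-m}$ replaced by $2\Omega_{N-m}$. That proof computes $|\det Z|$ via $|\det Z|^2=|\det(Z\Omega Z^{T})|$, which is block anti-triangular, so the change contributes $|\det(2\Omega_{|V|})|^{1/2}=2^{|V|}$. We keep track of this factor throughout.

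Next we evaluate $\Phi_0^{T}\Omega S$:
$$\Phi_0^{T}\Omega S=(A^{red},B^{red})\begin{pmatrix}0&I\\-I&0\end{pmatrix}\tfrac12\begin{pmatrix}-(B^{red})^{T}\\ (A^{red})^{T}\end{pmatrix}=\tfrac12\big(A^{red}(A^{red})^{T}+B^{red}(B^{red})^{T}\big),$$
and we also need $S^{T}S=\frac14\big(B^{red}(B^{red})^{T}+A^{red}(A^{red})^{T}\big)$. Hence
$$\Phi_0^{T}\Omega S\,(S^{T}S)^{-1}=2I_{|E|-|V|},$$
whose determinant is $\pm 2^{|E|-|V|}$. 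Invertibility of $S^{T}S$ follows from the linear independence of the columns of $S$. Lemma \ref{lem:symplecticlinear} therefore gives $|\det(S,R)|=2^{-(|E|-|V|)}$ before the peripheral correction. Including the factor $2^{-|V|}$ from the normalization of $\Phi_1$ yields $2^{-|E|}$.

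The main obstacle is this bookkeeping of powers of $2$ and the normalization of the peripheral rows. Three things must line up: the factor $2$ in $A_mB_l^{T}-B_mA_l^{T}=2i(m,l)$, the order and sign conventions for the $m$ and $l$ rows so that the block is really $2\Omega_{|V|}$, and the direction of the correction, which must lower $|\det(S,R)|$ from $2^{-(|E|-|V|)}$ to $2^{-|E|}$. I would first check this on a single cusp by scaling $\Phi_1\mapsto\Phi_1/\sqrt2$ and $R\mapsto\sqrt2\,R$ on the peripheral columns. Scaling $R$ by $\sqrt2$ there multiplies $\det(S,R)$ by $2^{|V|}$ when passing from the normalized $Y$ back to $R$. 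If this goes the wrong way, the discrepancy should instead be absorbed by recomputing $\det Z$ directly. Signs are irrelevant, since only $|\det(S,R)|$ is claimed.
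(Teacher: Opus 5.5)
Your proposal is correct and is essentially the paper's argument: apply Lemma~\ref{lem:symplecticlinear} with $X=S$ and the Neumann--Zagier rows as $\Phi$, checking the hypotheses via Proposition~\ref{prop:NZsymplectic} and using $|E|=|T|$. The only difference is how the powers of $2$ are tracked: the paper rescales all of $\Phi$ by $1/\sqrt{2}$ and takes $Y=\sqrt{2}R$, so it gets $|\det(S,\sqrt{2}R)|=\sqrt{2}^{-(|E|-|V|)}$ and then divides by $\sqrt{2}^{|E|+|V|}$; you rescale only the peripheral block, or equivalently absorb $|\det(2\Omega_{|V|})|^{1/2}=2^{|V|}$, and both routes correctly give $2^{-|E|}$.
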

\begin{proof}
Use $X=S$, $Y=\sqrt{2}R$, $\Phi_{0}=\frac{1}{\sqrt{2}}(A^{red}, B^{red})$, 
$$\Phi_{1}=\frac{1}{\sqrt{2}}\begin{pmatrix}
A_{\boldsymbol{m}} &B_{\boldsymbol{m}}\\
A_{\boldsymbol{l}} &B_{\boldsymbol{l}}
\end{pmatrix}$$
and the symplectic form $\Omega=\sum_{j\in T}d\log |z_{j}|\wedge d\log |z_{j}''|$. With these choices, Proposition \ref{prop:NZsymplectic} implies that the conditions of Lemma \ref{lem:symplecticlinear} is satisfied. Applying the lemma, we have
\begin{align*}
|\det(S, \sqrt{2}R)|&=\frac{\sqrt{2}^{|E|-|V|}2^{-|E|+|V|}}{\det\left((A^{red}, B^{red})\begin{pmatrix}
0 & I\\
-I & 0 \\
\end{pmatrix}\begin{pmatrix}
(-B^{red})^{T}\\
(A^{red})^{T}
\end{pmatrix}\left((A^{red}, B^{red})\begin{pmatrix}
(A^{red})^{T}\\
(B^{red})^{T}
\end{pmatrix}\right)^{-1}\right)}\\
&=\sqrt{2}^{-|E|+|V|}
\end{align*}
Since $R$ contains $|V|+|E|$ columns, we have 
$$\det(S, R)=\frac{1}{\sqrt{2}^{|E|+|V|}}\det(S, \sqrt{2}R)=\frac{1}{2^{|E|}}$$
\end{proof}

Combining Lemma \ref{lem:changetangentbase} and Equation \eqref{eqn:jacobirelation}, we obtains the following formula:
\begin{equation}\label{eq:holomorphicJac}
\det\left(\frac{d\boldsymbol{\theta}^{red}}{d\boldsymbol{x}^{red}}\right)2^{|E|}=\det\left(\mathrm{Im}\frac{dH(l_{i})}{dH(m_{j})}\right)\prod_{i\in T}\frac{|z_{i}|^{2}}{\mathrm{Im}(z_{i})}\left|\det\begin{pmatrix}
    K^{red} \\
    L_{\boldsymbol{m}}
\end{pmatrix}\right|^{2}.
\end{equation}
We still need one final ingredient relating the determianants of gram matrices to the flattening terms in \eqref{eq:main1}.
\begin{proposition}\label{prop:graminshape}
Let $f_{\Delta}, f_{\Delta}', f_{\Delta}''$ be a combinatorial flattening, we have
\begin{equation}\label{eq:realflat}
    e^{-\sum_{e}2x_{e}}\prod_{\Delta\in T}2\sqrt{-\det(\mathrm{Gram}(\Delta))}=\prod_{\Delta\in T}\frac{\mathrm{Im}(z_{\Delta})}{|z_{\Delta}|^{2}}|z_{\Delta}|^{2f_{\Delta}''}|z_{\Delta}''|^{-2f_{\Delta}}\end{equation}
\end{proposition}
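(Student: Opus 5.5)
The identity is local up to a global edge-length factor, and I would prove it by working tetrahedron by tetrahedron. The plan is to use Proposition \ref{prop:grambyz} to rewrite each Gram determinant in terms of $\mathrm{Im}(z_\Delta)$ and edge lengths. Then I would use \eqref{eqn:zinlength} to turn the flattening monomial on the right-hand side into exponentials of edge lengths. Finally, the flattening condition around each edge collapses the resulting sum of edge lengths to $\sum_e 2x_e$.

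\textbf{Step 1: local identity.} By Proposition \ref{prop:grambyz},
$$2\sqrt{-\det(\mathrm{Gram}(\Delta))}=e^{x_{03}+x_{12}}\,\mathrm{Im}(z_\Delta)=e^{b''_\Delta}\mathrm{Im}(z_\Delta),$$
using $\mathrm{Im}(z_\Delta)>0$ for a geometric triangulation. It therefore suffices to show
$$e^{-\sum_e 2x_e}\prod_\Delta e^{b''_\Delta}=\prod_\Delta |z_\Delta|^{-2}|z_\Delta|^{2f''_\Delta}|z''_\Delta|^{-2f_\Delta}.$$
By \eqref{eqn:lengthlogz}, $2\log|z|=b'-b''$ and $2\log|z''|=b-b'$. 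Using $f+f'+f''=1$, the logarithm of the local right-hand factor is
$$-(b'-b'')+f''(b'-b'')-f(b-b').$$
This is linear in $(b,b',b'')$ with coefficients $-f$, $1+f-1+f''=f+f''$ for $b'$, and $1-f''$ for $b''$. After subtracting $b''$, the $b''$ coefficient becomes $-f''$, and the $b'$ coefficient $f+f''-0$ should equal $1-f'$, which holds since $f+f''=1-f'$.

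So I need to check that
$$\sum_\Delta\bigl(-f_\Delta b_\Delta+(1-f'_\Delta)b'_\Delta-f''_\Delta b''_\Delta\bigr)+\sum_\Delta b''_\Delta-\sum_\Delta b''_\Delta$$
equals $-2\sum_e x_e$ plus the leftover terms. The bookkeeping is cleaner if I collect everything at once: the log of the total right side minus the log of the left side should vanish.

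\textbf{Step 2: global collapse.} I would write $b_\Delta=(G^T\boldsymbol x)_\Delta$ and similarly for $b'$ and $b''$, as in Lemma \ref{lem:lengthtoshape}. The terms $\sum_\Delta (f_\Delta b_\Delta+f'_\Delta b'_\Delta+f''_\Delta b''_\Delta)$ then become $\sum_e x_e\sum_\Delta(G_{e\Delta}f_\Delta+G'_{e\Delta}f'_\Delta+G''_{e\Delta}f''_\Delta)=2\sum_e x_e$ by the edge condition of a combinatorial flattening. This is what produces $e^{-\sum 2x_e}$.

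The remaining non-flattening terms come with coefficient $1$ on $b'$ (from the $|z|^{-2}$ and $e^{b''}$ pieces). They must cancel against the corresponding $b''$ pieces. This needs care about which quad the flattening is paired with, i.e., whether the monomial convention $z^{f''}(z'')^{-f}$ aligns $f''$ with the same opposite-edge pair as $b''=x_{03}+x_{12}$.

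\textbf{Main obstacle.} I expect the difficulty to be this sign and labelling consistency between the quad labels $z,z',z''$, the opposite-edge sums $b,b',b''$ in \eqref{eqn:zinlength}, and the flattening labels. My first move would be to fix conventions so that $|z|^{2}=e^{b'-b''}$ and $|z''|^{2}=e^{b-b'}$ and then recheck the coefficient of each of $b,b',b''$ in the total exponent. If a residual $\sum_\Delta(b'_\Delta-b''_\Delta)$ survives, I would absorb it by cyclically relabelling the flattening. The paper notes, citing \cite[Section 3.5]{DG13}, that the flattening monomial is independent of the choice at $\rho_0$, and the same argument applied to absolute values permits this relabelling. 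The factor $2$ versus the $4$ in Theorem \ref{thm:main2} is a separate normalization, tracked later together with $2^{|E|}$.
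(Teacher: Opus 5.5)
Your strategy is the same as the paper's: Proposition \ref{prop:grambyz} for the Gram determinant, \eqref{eqn:lengthlogz} for $|z|$ and $|z''|$, and the edge condition of the flattening to produce $e^{-\sum_e 2x_e}$. As written, however, your computation does not close, because of an arithmetic slip in Step 1.

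The logarithm of $|z_\Delta|^{-2}|z_\Delta|^{2f''_\Delta}|z''_\Delta|^{-2f_\Delta}$ is $-(b'-b'')+f''(b'-b'')-f(b-b')$. The coefficient of $b'$ here is $-1+f''+f=-f'$, not $f+f''$, since the factor $|z|^{-2}$ contributes $-1$ to $b'$, not $+1$. The local exponent is therefore exactly $-fb-f'b'-f''b''+b''$.
\begin{itemize}
\item The $+b''$ cancels the $e^{b''_\Delta}$ coming from $2\sqrt{-\det\mathrm{Gram}(\Delta)}=e^{b''_\Delta}\mathrm{Im}(z_\Delta)$.
\item Summing $-(f_\Delta b_\Delta+f'_\Delta b'_\Delta+f''_\Delta b''_\Delta)$ over $\Delta$ gives $-2\sum_e x_e$ by the edge condition.
\end{itemize}
So there is no residual and no labelling problem. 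With the conventions \eqref{eqn:zinlength} and $b''=x_{03}+x_{12}$, the identity holds exactly, for arbitrary decorated edge lengths with $\mathrm{Im}\, z_\Delta>0$; no completeness is needed. The paper organizes the same computation slightly differently. It writes $2\sqrt{-\det\mathrm{Gram}(\Delta)}=e^{b'_\Delta}\mathrm{Im}(z_\Delta)/|z_\Delta|^{2}$, so that the factors $\mathrm{Im}(z_\Delta)/|z_\Delta|^2$ match directly. It then checks $e^{-(fb+f'b'+f''b'')+b'}=|z|^{2f''}|z''|^{-2f}$ by substituting $f'=1-f-f''$.

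You should drop the fallback in your last paragraph, because it would not be a valid argument if a residual really existed.
\begin{itemize}
\item The edge condition forces $\sum_\Delta(f_\Delta b_\Delta+f'_\Delta b'_\Delta+f''_\Delta b''_\Delta)=2\sum_e x_e$ for every combinatorial flattening. Switching to another flattening therefore changes nothing on the edge-length side.
\item The independence statement of \cite[Section 3.5]{DG13} concerns the value of the complex flattening monomial at $\rho_0$. It does not allow you to permute the roles of $z,z',z''$ in a fixed formula.
\item A leftover term such as $\sum_\Delta(b'_\Delta-b''_\Delta)=2\sum_\Delta\log|z_\Delta|$ is a genuinely nonzero function, and it would simply make the identity false.
\end{itemize}
The correct repair is to redo the coefficient bookkeeping, after which the proof is complete and coincides with the paper's.
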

\begin{proof} By the definition of combinatorial flattening, we have
$$\sum_{e\in E}2x_{e}=\sum_{\Delta\in T}s_{\Delta}f_{\Delta}+s_{\Delta}'f_{\Delta}'+s_{\Delta}''f_{\Delta}''.$$
Using Proposition \ref{prop:grambyz} and Equation \eqref{eqn:zinlength}, we obtain
$$\sqrt{-\det(\mathrm{Gram}(\Delta))}=\frac{1}{2}e^{s'_{\Delta}}\frac{\mathrm{Im}(z_{\Delta})}{|z_{\Delta}|^{2}}.$$
Substituting into the left-hand side of \eqref{eq:realflat}. One find it suffices to prove
$$e^{-b_{\Delta}f_{\Delta}-b_{\Delta}'f_{\Delta}'-b_{\Delta}''f_{\Delta}''}e^{b_{\Delta}'}=|z_{\Delta}|^{2f_{\Delta}''}|z_{\Delta}''|^{-2f_{\Delta}},$$
which follows by rewriting $f_{\Delta}'=1-f_{\Delta}-f_{\Delta}''$, and apply Equation \eqref{eqn:lengthlogz}.
\end{proof}
\begin{proof}[Proof of Theorem \ref{thm:main2}]
Combining Proposition \ref{prop:graminshape} and Equation \eqref{eq:holomorphicJac}, we obtains
\begin{equation}\label{eq:HJplusFL}\det\left(\frac{d\boldsymbol{\theta}^{red}}{d\boldsymbol{x}^{red}}\right)e^{-\sum_{e}2x_{e}}\prod_{\Delta\in T}\sqrt{-\mathrm{Gram}(\Delta)}=\det\left(\mathrm{Im}\frac{dH(l_{i})}{dH(m_{j})}\right)\det(P^{T}, C)^{2}|\mathrm{tor}(M, \boldsymbol{m}, \rho_{0})^2|\end{equation}
We apply the following linear algebra lemma to recover the determinant of the full Hessian $\left(\frac{d\boldsymbol{\theta}}{d\boldsymbol{x}}\right)$ from the reduced one.
\begin{lemma}\label{lem:symmetricboarder}
Let $J$ be an $N\times N$ symmetric matrix of rank $N-k$. Suppose $C$ is an $N\times k$ matrix such that $JC=0$ and let $P$ be an $(N-k)\times N-k$ matrix consisting of column vectors complementary to those of $C$. Then we have
$$\frac{\det(PJP^{T})}{\det(P^{T}, C)^{2}}=\frac{(-1)^{k}}{\det(C^{T}C)^2}\det\begin{pmatrix}
J & C\\
C^{T} & 0
\end{pmatrix}.$$
\end{lemma}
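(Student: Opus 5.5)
The plan is to conjugate the bordered matrix by a block change of basis built from $P$ and $C$. Throughout, I assume (as in the application, where $C$ is the edge--vertex adjacency matrix and $J=[d\theta_i/dx_j]$) that $C$ has full column rank $k$, so $C^{T}C$ is invertible. I also assume that $P$ consists of $N-k$ rows (standard basis row vectors) whose transposes together with the columns of $C$ span $\mathbb{R}^{N}$. Then the square matrix
$$Q:=(P^{T},\, C)$$
is invertible. Since $J$ is symmetric and $JC=0$, we also have $C^{T}J=0$. So the first step is to record that
$$Q^{T}JQ=\begin{pmatrix} PJP^{T} & PJC\\ C^{T}JP^{T} & C^{T}JC\end{pmatrix}=\begin{pmatrix} PJP^{T} & 0\\ 0 & 0\end{pmatrix}.$$

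Next I would multiply the bordered matrix on both sides by $\mathrm{diag}(Q,I_k)$:
$$\begin{pmatrix} Q^{T} & 0\\ 0 & I_k\end{pmatrix}\begin{pmatrix} J & C\\ C^{T} & 0\end{pmatrix}\begin{pmatrix} Q & 0\\ 0 & I_k\end{pmatrix}=\begin{pmatrix} Q^{T}JQ & Q^{T}C\\ C^{T}Q & 0\end{pmatrix}.$$
In block form, with blocks of sizes $N-k$, $k$, $k$, the right-hand side is
$$\begin{pmatrix} X & 0 & Y\\ 0 & 0 & G\\ Y^{T} & G & 0\end{pmatrix},\qquad X=PJP^{T},\; Y=PC,\; G=C^{T}C.$$
Its determinant equals $\det(Q)^{2}\det\begin{pmatrix} J & C\\ C^{T} & 0\end{pmatrix}$, and $\det Q=\det(P^{T},C)$.

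The only real computation is the determinant of this $3\times 3$ block matrix. I would swap the second and third block columns, which contributes a sign $(-1)^{k\cdot k}=(-1)^{k}$. This gives
$$\begin{pmatrix} X & Y & 0\\ 0 & G & 0\\ Y^{T} & 0 & G\end{pmatrix}.$$
The last block column has only the entry $G$, so expanding along it gives $\det G\cdot\det\begin{pmatrix} X & Y\\ 0 & G\end{pmatrix}=\det X\,(\det G)^{2}$. Hence
$$\det(P^{T},C)^{2}\det\begin{pmatrix} J & C\\ C^{T} & 0\end{pmatrix}=(-1)^{k}\det(PJP^{T})\,\det(C^{T}C)^{2}.$$
Dividing by $\det(P^{T},C)^{2}\det(C^{T}C)^{2}$ and using $(-1)^{k}=(-1)^{-k}$ yields exactly the stated identity.

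I do not expect a serious obstacle. The points needing care are the nondegeneracy hypotheses ($Q$ invertible and $C^{T}C$ invertible, which follow from complementarity and full column rank of $C$) and tracking the sign of the block column permutation. The sign is in any case immaterial for the application in the proof of Theorem \ref{thm:main2}, where torsion is defined up to sign.
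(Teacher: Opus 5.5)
Your proof is correct. The paper does not actually prove this lemma. It is stated and then used, with a pointer to \cite{LMSWY26} for the symmetry of $J=\bigl(\frac{d\boldsymbol\theta}{d\boldsymbol x}\bigr)$, so there is no argument in the text to compare yours against.

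Your steps are sound:
\begin{enumerate}
\item You conjugate by $\mathrm{diag}(Q,I_k)$ with $Q=(P^T,C)$.
\item You use $C^TJ=(JC)^T=0$ to get $Q^TJQ=\mathrm{diag}(PJP^T,0)$.
\item You swap block columns, with sign $(-1)^{k^2}=(-1)^k$.
\item You expand the resulting block-triangular matrix.
\end{enumerate}
This yields
$$\det(P^T,C)^2\det\begin{pmatrix}J&C\\C^T&0\end{pmatrix}=(-1)^k\det(PJP^T)\det(C^TC)^2,$$
which is the lemma after division. It is a complete verification.

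Three minor remarks:
\begin{enumerate}
\item You rightly read $P$ as an $(N-k)\times N$ matrix; the size given in the statement is a typo.
\item The displayed identity holds with no rank hypothesis on $J$. Invertibility of $(P^T,C)$ and of $C^TC$ is needed only to divide.
\item Full column rank of $C$ is automatic once $(P^T,C)$ is invertible. So for real $C$, as in the application, $C^TC$ is invertible without any extra assumption.
\end{enumerate}
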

Here we let $P$ be the matrix that selects the linearly independent rows of $K$, $C$ be the edge-vertex adjacency matrix, and $J$ be the full Jacobian matrix $\left(\frac{d\boldsymbol{\theta}}{d\boldsymbol{x}}\right)$. In \cite[proof of Proposition 6.2]{LMSWY26}, it is verified there that $J$ is a Hessian matrix and hence symmetric, and Lemma \ref{lem:symmetricboarder} is applied. Therefore, we have
$$PJP^{T}=\frac{d\boldsymbol{\theta}^{red}}{d\boldsymbol{x}^{red}}, \quad\text{ and}\quad \frac{\det(\frac{d\boldsymbol{\theta}^{red}}{d\boldsymbol{x}^{red}})}{\det(P^{T}, C)^{2}}=\frac{(-1)^{k}}{\det(C^{T}C)^2}\det\begin{pmatrix}
\left(\frac{d\boldsymbol{\theta}}{d\boldsymbol{x}}\right) & C\\
C^{T} & 0
\end{pmatrix}.$$
Substituting into Equation \eqref{eq:HJplusFL}, we obtain Equation \eqref{eq:main2}. It completes the proof of Theorem \ref{thm:main2} 
\end{proof}

\bibliographystyle{alpha}
\bibliography{a}

@article{Sch78,
  author = {Schwarz, A. S.},
  title = {The partition function of degenerate quadratic functional and {R}ay--{S}inger invariants},
  journal = {Letters in Mathematical Physics},
  year = {1978},
  volume = {2},
  pages = {247--252},
}

@article{Sch79,
  author = {Schwarz, A. S.},
  title = {The partition function of a degenerate functional},
  journal = {Communications in Mathematical Physics},
  year = {1979},
  volume = {67},
  number = {1},
  pages = {1--16},
}

@article{Wit89,
  author = {Witten, Edward},
  title = {Quantum field theory and the {J}ones polynomial},
  journal = {Communications in Mathematical Physics},
  year = {1989},
  volume = {121},
  number = {3},
  pages = {351--399},
}

@article{RT91,
  author = {Reshetikhin, N. and Turaev, V. G.},
  title = {Invariants of {$3$}-manifolds via link polynomials and quantum groups},
  journal = {Inventiones Mathematicae},
  year = {1991},
  volume = {103},
  pages = {547--597},
}

@article{TV92,
  author = {Turaev, V. G. and Viro, O. Y.},
  title = {State sum invariants of {$3$}-manifolds and quantum {$6j$}-symbols},
  journal = {Topology},
  year = {1992},
  volume = {31},
  number = {4},
  pages = {865--902},
}

@article{Kas95,
  author = {Kashaev, R. M.},
  title = {A link invariant from quantum dilogarithm},
  journal = {Modern Physics Letters A},
  year = {1995},
  volume = {10},
  number = {19},
  pages = {1409--1418},
}

@article{CY18,
  author = {Chen, Qingtao and Yang, Tian},
  title = {Volume conjectures for the {R}eshetikhin--{T}uraev and the {T}uraev--{V}iro invariants},
  journal = {Quantum Topology},
  year = {2018},
  volume = {9},
  number = {3},
  pages = {419--460},
}

@article{DG13,
  author = {Dimofte, Tudor and Garoufalidis, Stavros},
  title = {The quantum content of the gluing equations},
  journal = {Geometry \& Topology},
  year = {2013},
  volume = {17},
  number = {3},
  pages = {1253--1315},
}

@article{Sie21,
  author = {Siejakowski, Rafa{\l}},
  title = {Infinitesimal gluing equations and the adjoint hyperbolic {R}eidemeister torsion},
  journal = {Tohoku Mathematical Journal},
  year = {2021},
  volume = {73},
  number = {4},
  eprint = {1710.02109},
  archivePrefix = {arXiv},
}

@article{Yoo24,
  author = {Yoon, Seokbeom},
  title = {The twisted {$1$}-loop invariant and the {J}acobian of {P}tolemy coordinates},
  journal = {Mathematische Zeitschrift},
  year = {2024},
  volume = {307},
  number = {1},
  note = {Paper No.~19},
}

@misc{DGY23,
  author = {Dunfield, Nathan M. and Garoufalidis, Stavros and Yoon, Seokbeom},
  title = {{$1$}-loop equals torsion for fibered {$3$}-manifolds},
  year = {2023},
  note = {Preprint, arXiv:2304.00469},
  eprint = {2304.00469},
  archivePrefix = {arXiv},
}

@misc{PW23,
  author = {Pandey, Tushar and Wong, Ka Ho},
  title = {Geometry of fundamental shadow link complements and applications to the {$1$}-loop conjecture},
  year = {2023},
  note = {Preprint, arXiv:2308.06643},
  eprint = {2308.06643},
  archivePrefix = {arXiv},
}

@article{GY26,
  author = {Garoufalidis, Stavros and Yoon, Seokbeom},
  title = {{$1$}-loop equals torsion for two-bridge knots},
  journal = {Quantum Topology},
  year = {2026},
  note = {Published online first, 10 June 2026},
}

@article{CM23,
  author = {Chen, Qingtao and Murakami, Jun},
  title = {Asymptotics of quantum {$6j$} symbols},
  journal = {Journal of Differential Geometry},
  year = {2023},
  volume = {123},
  number = {1},
  pages = {1--20},
}

@article{WY24,
  author = {Wong, Ka Ho and Yang, Tian},
  title = {Adjoint twisted {R}eidemeister torsion and {G}ram matrices},
  journal = {Advances in Mathematics},
  year = {2024},
  volume = {438},
  note = {Paper No.~109470},
}

@misc{LMSWY25a,
  author = {Liu, Tianyue and Ming, Shuang and Sun, Xin and Wu, Baojun and Yang, Tian},
  title = {{T}uraev--{V}iro invariant from the modular double of {$\mathrm{U}_{q}\mathfrak{sl}(2;\mathbb{R})$}},
  year = {2025},
  note = {Preprint, arXiv:2508.05120},
  eprint = {2508.05120},
  archivePrefix = {arXiv},
}

@misc{LMSWY25b,
  author = {Liu, Tianyue and Ming, Shuang and Sun, Xin and Wu, Baojun and Yang, Tian},
  title = {Asymptotics of {$b$}-{$6j$} symbols and anti-de {S}itter tetrahedra},
  year = {2025},
  note = {Preprint, arXiv:2511.20953},
  eprint = {2511.20953},
  archivePrefix = {arXiv},
}

@misc{LMSWY26,
  author = {Liu, Tianyue and Ming, Shuang and Sun, Xin and Wu, Baojun and Yang, Tian},
  title = {{$\mathrm{U}_{q\widetilde q}\mathfrak{sl}(2;\mathbb{R})$} {T}uraev--{V}iro invariants for cusped {$3$}-manifolds},
  year = {2026},
  note = {Preprint, arXiv:2608.16560},
  eprint = {2608.16560},
  archivePrefix = {arXiv},
}

@misc{HKS21,
  author = {Hodgson, Craig D. and Kricker, Andrew J. and Siejakowski, Rafa{\l} M.},
  title = {On the asymptotics of the meromorphic {$3$D}-index},
  year = {2021},
  note = {Preprint, arXiv:2109.05355},
  eprint = {2109.05355},
  archivePrefix = {arXiv},
}

@article{Mil62,
  author = {Milnor, John},
  title = {A duality theorem for {R}eidemeister torsion},
  journal = {Annals of Mathematics},
  year = {1962},
  volume = {76},
  number = {1},
  pages = {137--147},
}

@article{Mil66,
  author = {Milnor, John},
  title = {{W}hitehead torsion},
  journal = {Bulletin of the American Mathematical Society},
  year = {1966},
  volume = {72},
  pages = {358--426},
}

@book{Tur01,
  author = {Turaev, Vladimir},
  title = {Introduction to Combinatorial Torsions},
  series = {Lectures in Mathematics {ETH Z\"urich}},
  publisher = {Birkh{\"a}user},
  address = {Basel},
  year = {2001},
}

@article{Por97,
  author = {Porti, Joan},
  title = {Torsion de {R}eidemeister pour les vari{\'e}t{\'e}s hyperboliques},
  journal = {Memoirs of the American Mathematical Society},
  year = {1997},
  volume = {128},
  number = {612},
  note = {x+139 pp.},
}

@incollection{Por18,
  author = {Porti, Joan},
  title = {{R}eidemeister torsion, hyperbolic three-manifolds, and character varieties},
  booktitle = {Handbook of Group Actions. Vol.~IV},
  series = {Advanced Lectures in Mathematics},
  volume = {41},
  publisher = {International Press},
  year = {2018},
  pages = {447--507},
}

@article{Bar07,
  author = {Bar-Natan, Dror},
  title = {Fast {K}hovanov homology computations},
  journal = {Journal of Knot Theory and Its Ramifications},
  year = {2007},
  volume = {16},
  number = {3},
  pages = {243--255},
}

@article{NZ85,
  author = {Neumann, Walter D. and Zagier, Don},
  title = {Volumes of hyperbolic three-manifolds},
  journal = {Topology},
  year = {1985},
  volume = {24},
  number = {3},
  pages = {307--332},
}

@incollection{Neu92,
  author = {Neumann, Walter D.},
  title = {Combinatorics of triangulations and the {C}hern--{S}imons invariant for hyperbolic {$3$}-manifolds},
  booktitle = {Topology '90},
  editor = {Apanasov, Boris and Neumann, Walter D. and Reid, Alan W. and Siebenmann, Laurent},
  series = {Ohio State University Mathematics Research Institute Publications},
  number = {1},
  publisher = {de Gruyter},
  address = {Berlin},
  year = {1992},
  pages = {243--271},
}

@article{PT01,
  title     = {Clebsch--Gordan and Racah--Wigner Coefficients for a Continuous
               Series of Representations of $U_q(\mathfrak{sl}(2,\mathbb{R}))$},
  volume    = {224},
  issn      = {0010-3616},
  url       = {http://dx.doi.org/10.1007/PL00005590},
  doi       = {10.1007/PL00005590},
  number    = {3},
  journal   = {Communications in Mathematical Physics},
  publisher = {Springer Science and Business Media LLC},
  author    = {Ponsot, B. and Teschner, J.},
  year      = {2001},
  month     = dec,
  pages     = {613--655}
}

@misc{PT99,
      title={Liouville bootstrap via harmonic analysis on a noncompact quantum group}, 
      author={B. Ponsot and J. Teschner},
      year={1999},
      eprint={hep-th/9911110},
      archivePrefix={arXiv},
      primaryClass={hep-th},
      url={https://arxiv.org/abs/hep-th/9911110}, 
}
\end{document}